\documentclass{amsart}
\usepackage{amsmath, amsfonts, latexsym, array, amssymb, amscd }
\usepackage[all]{xypic}
\usepackage[all]{xy}
\usepackage{graphicx}
\usepackage{color}
\usepackage{enumitem}
\usepackage{epstopdf}
\usepackage{hyperref}
\usepackage{mathtools}
\usepackage{mathrsfs}
\usepackage{parskip}
\usepackage{accents}
\usepackage[svgnames]{xcolor}
\usepackage{caption}
\usepackage{subcaption}
\usepackage{tikz}
\usepackage{tikz-cd}
\usetikzlibrary{calc}
\usetikzlibrary{arrows.meta}

\numberwithin{equation}{section}
\numberwithin{figure}{section}

\newtheorem{theorem}{Theorem}[section]
\newtheorem*{theorem*}{Theorem}
\newtheorem{lemma}[theorem]{Lemma}

\newtheorem{proposition}[theorem]{Proposition}
\newtheorem*{proposition*}{Proposition}
\newtheorem{corollary}[theorem]{Corollary}
\newtheorem{thmx}{Theorem}

\theoremstyle{definition}
\newtheorem{definition}[theorem]{Definition}

\newtheorem{remark}[theorem]{Remark}

\def \Proj{\operatorname{Proj}}
\def \Int{\operatorname{Int}}

\usepackage{indentfirst}
\DeclareMathOperator{\diam}{diam}

\newcommand{\SSS}{\mathcal{S}}
\newcommand{\GGG}{\mathcal{G}}
\newcommand{\RR}{\mathcal{R}}

\newcommand{\BBB}{\mathcal{B}}

\newcommand{\HHH}{\mathcal{H}}

\newcommand{\DDD}{\mathcal{D}}

\newcommand{\WWW}{\mathcal{W}}

\newcommand{\R}{\mathbb{R}}
\newcommand{\Z}{\mathbb{Z}}
\newcommand{\abs}[1]{\left\lvert#1\right\rvert}

\newcommand{\FFF}{\mathcal{F}}

\def\CAT{\operatorname{CAT}}
\newcommand{\bX}{\partial X}

\newcommand{\wguu}{\mathcal{W}^{\mathrm{uu}}_{\HHH}}
\newcommand{\wgu}{\mathcal{W}^{\mathrm{u}}_{\HHH}}
\newcommand{\wgss}{\mathcal{W}^{\mathrm{ss}}_{\HHH}}
\newcommand{\wgs}{\mathcal{W}^{\mathrm{s}}_{\HHH}}
\newcommand{\wuu}{W^{\mathrm{uu}}}
\newcommand{\wu}{W^{\mathrm{u}}}
\newcommand{\wss}{W^{\mathrm{ss}}}
\newcommand{\ws}{W^{\mathrm{s}}}

\newcommand{\ang}[2]{\langle#1,#2\rangle}

\newcommand{\ul}{H\"older\textsubscript{ul}}

\title[$C^\alpha$ metric Anosov flows]
{H\"older regularity for metric Anosov flows}
\author{Caleb~Dilsavor and Daniel~J.~ Thompson} 
\date{\today}
\thanks{D.T. was partially supported by NSF grant DMS-$1954463$ and DMS-$2349915$.}
\address{D.~J.~Thompson, Department of Mathematics, The Ohio State University, Columbus, OH 43210, \emph{E-mail address:} \tt{thompson@math.osu.edu}}
\address{C. Dilsavor, Department of Mathematics, Northwestern University, 2033 Sheridan Road, Evanston, IL 60208, USA}
\email{caleb.dilsavor@northwestern.edu}
\subjclass[2020]{Primary 37D20; Secondary 37B10, 37D40, 37D35}
\begin{document}
\maketitle

\begin{abstract} 
We define a class of flows on compact metric spaces that we call \emph{$C^\alpha$ metric Anosov flows}, $0 < \alpha < 1$ or $\alpha = \mathrm{Lip}$. We give verifiable criteria for a metric Anosov flow to be $C^\alpha$. We show that $C^\alpha$ metric Anosov flows have H\"older continuous local product structure, and in particular H\"older continuous holonomies. We deduce that topological conjugacies between $C^\alpha$ metric Anosov flows are automatically H\"older continuous. We show that any $C^\alpha$ metric Anosov flow admits a strong (i.e. H\"older continuous) Markov coding. This yields strong ergodic properties for equilibrium states for H\"older potentials for transitive $C^\alpha$ metric Anosov flows. The regularity of the local product structure is proved in a non-compact setting which applies for the geodesic flow on the universal cover of a locally $\CAT(-1)$ space. This allows us to obtain H\"older regularity results for the holonomy maps for pinched negative curvature manifolds with respect to the Sasaki metric. This contrasts with the result of Ballmann, Brin and Burns that there are examples in this class for which the dynamical distributions are not H\"older. 
\end{abstract}

\section{Introduction}

For dynamical systems with some hyperbolicity, the regularity of dynamically-defined sets and operations is a central issue. Foundational references for smooth maps and flows include \cite{PSW, HW, bH94, KH, SS92, HH77} and \cite[\S 9]{FH19}. The paper \cite{bH94} contains an excellent overview of the classic developments in this area up to the early 90's.  We develop a framework in which H\"older regularity of these dynamically-defined sets and operations can be studied using relatively elementary techniques and without requiring smooth structure. Metric Anosov flows, or Smale flows, on compact spaces were introduced by Pollicott in \cite{Pollicott} as a non-smooth abstraction of Axiom A flow by assuming only a list of axioms for a dynamical bracket operation.  This class has proved useful in recent years, particularly for the theory of pressure metrics for Anosov representations developed by Bridgeman, Canary, Labourie and Sambarino \cite{BCLS}.  Another natural class of metric Anosov flows is provided by the geodesic flow for a compact locally $\CAT(-1)$ spaces \cite{CLT19}.
	
	A drawback of the original definition of metric Anosov flows is that there is no reason that they should have any nice H\"older properties. It has been a challenge to find natural regularity criteria to build into this framework.  Pollicott showed that metric Anosov flows admit a Markov coding (i.e. a description by a suspension flow over a shift a finite type) by extending Bowen's construction for Axiom A flows \cite{rB73}. However, this coding might lack H\"older regularity.  We say a Markov coding is a \emph{strong Markov coding} if the suspension flow has a H\"older roof function and the projection to the flow is H\"older. This issue was addressed in Constantine, Lafont and the second-named author in \cite{CLT19} with a focus on examples arising from geometry. They gave criteria to obtain a strong Markov coding, and verified this criteria for geodesic flows on $\CAT(-1)$ spaces and for projective Anosov representations. 
	
	The approach in \cite{CLT19} gives abstract criteria for the existence of a strong Markov coding. However, the arguments to verify these criteria given in \cite{CLT19} were ad hoc and were constructed for specific classes of examples. These ad hoc arguments were heavily geometric and do not apply to more general metric Anosov flows. 
	
	In this paper, we go further and provide a natural framework to study H\"older regularity for metric Anosov flows by defining the class of $C^\alpha$ metric Anosov flows. Our definitions are set up differently from the previous literature and they provide a natural framework for verifiable H\"older conditions on the flow.   The known classes of examples of metric Anosov flows of interest can readily be shown to be $C^\alpha$ metric Anosov flows.  From our `minimal' natural H\"older assumptions at the start, we prove that all of the more complex dynamical sets and operations that we are interested in are H\"older. In particular, we show that $C^\alpha$ metric Anosov flows satisfy H\"older continuity of the Bowen bracket and thus the natural holonomy maps defined by the bracket operation are H\"older continuous. From this, we give a general argument to verify the criteria in \cite{CLT19} and thus we show that every $C^\alpha$ metric Anosov flow admit a strong Markov coding. This extends the results of \cite{CLT19} to a natural and more easily verifiable framework. When the flow is transitive, the strong Markov coding implies that $C^\alpha$ metric Anosov flows satisfy many stronger dynamical and statistical properties: for example,  the unique equilibrium state of a H\"older potential satisfies the CLT. 	
	We provide an alternative formulation of our definition in terms of laminations which is closer to the definitions in \cite{BCLS}. This may have advantages in some situations, and demonstrates the relationship between our framework and other formulations in the literature.  
	
	Our regularity proofs apply in some non-compact settings, and they have independent interest for geodesic flows in pinched negative curvature, where they contrast with the result of \cite{BBB} that H\"older regularity of the dynamical distributions does not hold without the further assumption that the derivatives of curvature are bounded.  In this setting, we show that the dynamical foliations have H\"older holonomies in a more general sense which allows a wide class of smooth transversals. To summarize the main points of this paper, we have:
	\vspace{2pt}
\begin{quote}
\emph{We provide a natural verifiable framework for metric Anosov flows on compact spaces to have H\"older continuity of: their local product structure and associated holonomy maps and their coding by a suspension flow over a Markov shift. This applies to all the examples of metric Anosov flows in the literature. An application of our methods is that the holonomy maps for the geodesic flow on any pinched negative curvature manifold are H\"older continuous without needing to assume that the derivatives of curvature are bounded.}
\end{quote}
\vspace{5pt}
The paper is structured as follows. 
In \S \ref{sec:defs}, we set up our background and notation. In \S \ref{sec:statement}, we state our results. In \S \ref{s.main}, we prove our main results on H\"older regularity of the Bowen bracket and holonomy maps.
In \S \ref{s.coding}, we prove Theorem \ref{Coding} concerning the regularity of
the coding. In \S \ref{s.geodesicflow}, we apply our results to geodesic flows. In \S \ref{subsec.otherdefinitions}, we give an equivalent formulation of our definition in terms of laminations and establish the relationships between our definition and others in the literature.

\section{Definitions} \label{sec:defs}

\subsection{General setup} Let $\FFF: Y \times \R \to Y$ be a continuous flow on a complete separable metric space $(Y,d)$, and we write
$f_t(y) = \FFF(y,t)$. For $\delta>0$, we write
\[
(Y\times Y)_{\delta}:= \{(x,y): x, y \in Y, d(x, y) \leq \delta\}.
\]
We sometimes refer to $(Y\times Y)_{\delta}$ informally as a \emph{neighborhood of the diagonal of $Y$}. We equip $(Y\times Y)_{\delta}$ with the metric given by taking the maximum distance of both its coordinates. 

The \emph{strong stable set} and \emph{strong unstable set} around $y \in Y$ are defined by
\begin{align*}
	\wss(y;d) &= \{ x \in Y \colon \lim_{t\to\infty}d(f_tx,f_ty) = 0\},\\
	\wuu(y;d) &= \{ x \in Y \colon \lim_{t\to-\infty}d(f_tx,f_ty) = 0\},
\end{align*}
respectively, and for $\epsilon > 0$ their local versions are
\begin{align*}
	\wss_\epsilon(y;d) &= \{x \in \wss(y;d) \colon d(f_tx,f_ty) \leq \epsilon \text{
	for } t\geq 0\},\\
	\wuu_\epsilon(y;d) &= \{x \in \wuu(y;d) \colon d(f_tx,f_ty) \leq \epsilon \text{
	for } t\leq 0\}.
\end{align*}
We define the global and local \emph{weak unstable sets} and \emph{weak stable sets} by
\[
	\ws(y;d) = \bigcup_{v\in \R} \wss(f_vy;d), \quad
	\wu(y;d) = \bigcup_{v\in\R}\wuu(f_vy;d)
,\]
\[
\ws_\epsilon(y;d) = \bigcup_{\abs{v} \leq \epsilon} \wss_\epsilon(f_vy;d), \quad 
\wu_\epsilon(y;d) = \bigcup_{\abs{v}\leq \epsilon}\wuu_\epsilon(f_vy;d). \]
When the metric $d$ is clear from the context, then we simply write $\wss(y)$, $\wuu(y)$, $\ws_\epsilon(y)$, and $\wu_\epsilon(y)$. Our definition of $C^\alpha$ metric Anosov flows assumes that the space $(Y,d)$ is assumed to be compact, but our regularity results have interest and applications beyond the compact situations and the above definitions make sense without a compactness assumption.

\subsection{Definition of $C^\alpha$ metric Anosov flows} 
Let $\FFF: Y \times \R \to Y$ be a continuous flow on a compact metric space $(Y,d)$.
\begin{definition}\label{def.cc}
	We say that $\FFF$ has \emph{canonical coordinates at scale $(\epsilon,\delta)$} if,
	for every $x,y \in Y$ with $d(x,y) \leq \delta$, there is a unique
	$v = v(x,y)$ with $\abs{v} \leq \epsilon$ such that
	\[
		\wss_{\epsilon}(x)\cap \wuu_{\epsilon}(f_{v}y) \neq \emptyset
	,\]
	and furthermore this intersection is a singleton.  We say that $\FFF$ has canonical coordinates if for every small enough $\epsilon > 0$, there
	exists $\delta > 0$ such
	say that $\FFF$ has canonical coordinates at scale $(\epsilon,\delta)$.
	We let $\ang{x}{y}$ be the unique element of this
	singleton when it is defined, and we call $\ang{x}{y}$ the \emph{Bowen bracket} of $x$ and $y$. We call the map $\ang{\cdot}{\cdot}: (Y \times Y)_\delta \to Y$ the  \emph{Bowen bracket} for $\FFF$.
\end{definition}
Note that if  $\FFF$ has canonical coordinates at scale $(\epsilon,\delta)$ and $\delta'\in(0, \delta)$, then   $\FFF$ has canonical coordinates at scale $(\epsilon,\delta')$. We emphasize that in the notation $\ang{x}{y}$, we use the convention that we are on the same strong stable as $x$ and the same weak unstable as $y$. This agrees with the most widely used convention in the literature\footnote{We note that some sources, including \cite{CLT19}, use the opposite convention.} 
\begin{definition}\label{def.Calpha}
	Let $Y$ be a compact metric space, and let $\FFF : Y\times \R \to Y$ be a continuous flow with
	canonical coordinates. Let $0 < \alpha \leq 1$, and suppose that there exist $\eta, k, \lambda >
	0$, $C \geq 1$ such that
	\begin{equation}\label{eq.HW}
		\begin{split}
			d(f_tx,f_ty) \leq Ce^{-\lambda t}d(x,y)^\alpha
			\quad (y \in Y, \: x \in \wss_{\eta}(y),\: t \geq 0),\\
			d(f_{-t}x',f_{-t}y) \leq Ce^{-\lambda t}d(x',y)^\alpha
			\quad (y \in Y, \: x' \in \wuu_\eta(y), \: t \geq 0),
		\end{split}
		\tag{HW}
	\end{equation}
	\begin{equation}\label{eq.HF}
		d(f_tx,f_ty)\leq Ce^{k\abs{t}}d(x,y)^\alpha \quad (x,y \in Y, \: d(x,y) \leq \eta, \: t \in \R),
		\tag{HF}
	\end{equation}
	\begin{equation}\label{eq.HS}
		\frac{1}{C}\abs{t}^{\frac{1}{\alpha}} \leq d(y,f_ty)\leq C\abs{t}^\alpha \quad 
		(y \in Y, \: \abs{t} \leq \eta).
		\tag{HS}
	\end{equation}
	For $0 < \alpha <1$, we say that $\FFF$ is a \emph{$C^\alpha$ metric Anosov flow}. When $\alpha = 1$, we say that $\FFF$ is a \emph{Lipschitz metric Anosov flow}.
	\end{definition}
Note that we must have $\lambda \leq k$, so we assume this. Informally, the properties \eqref{eq.HW}, \eqref{eq.HF}, and \eqref{eq.HS} are respectively H\"older contraction in the local strong stable and unstable leaves, H\"older continuity of the flow, and H\"older continuity of the speed. The definitions of  \eqref{eq.HW}, \eqref{eq.HF}, and \eqref{eq.HS} also apply when $Y$ is not assumed to be compact.

\begin{remark}Our definition works directly with the stable and unstable sets without assuming a priori that they have any topological structure. This differs from, and simplifies, the original presentation by Pollicott in \cite{Pollicott}. In \S \ref{subsec.otherdefinitions}, we prove that our definition can be formulated in terms of topological laminations. The definition of metric Anosov flows in \cite{BCLS} is given in that language and is equivalent to Pollicott's original definition. In particular, this shows that our definition of  $C^\alpha$ metric Anosov flows are consistent with Pollicott's definition as illustrated in Figure \ref{fig:metric-anosov}. The idea of giving topological definitions which capture the essential features of smooth hyperbolic dynamics was likely initiated by Bowen in his thesis and early work, in particular his definition of Axiom $A^\ast$ homeomorphisms in \cite{rB71}.
\end{remark}
\subsection{Uniform canonical coordinates for non-compact spaces}
Recall that a function $\Psi:Y_1 \to Y_2$ between metric spaces
$(Y_1,d_1)$ and $(Y_2,d_2)$ is \emph{uniformly locally H\"older}, which we abbreviate to
\ul{}, if there exist $A,\delta > 0$, $\gamma \in (0, 1]$ such that, for all $x,y\in Y_1$ with $d_1(x,y)
\leq \delta$, then
\[
	d_2(\Psi x, \Psi y) \leq A\, d_1(x,y)^\gamma.
\]
The constant $\gamma$ is called the \emph{exponent} of the \ul{}
continuity. We follow verbatim the definition of the canonical coordinates in the compact case. We emphasize that the content in the non-compact case is that the constants are chosen uniformly over the whole space.
\begin{definition}\label{def.ccu}
	The flow $\FFF$ has \emph{uniform canonical coordinates at scale $(\epsilon,\delta)$} if
	for every $x,y \in Y$ with $d(x,y) \leq \delta$, there is a unique
	$v = v(x,y)$ with $\abs{v} \leq \epsilon$ such that
	\[
		\wss_{\epsilon}(x)\cap \wuu_{\epsilon}(f_{v}y) \neq \emptyset
	,\]
	and this intersection is a singleton. If for every small enough $\epsilon > 0$, there
	is some $\delta > 0$ such that $\FFF$ has canonical coordinates at scale $(\epsilon,\delta)$, then we
	say that $\FFF$ has \emph{uniform canonical coordinates}.
	We let $\ang{x}{y}$ be the unique element of this
	singleton when it is defined, and we call $\ang{x}{y}$ the \emph{Bowen bracket} of $x$ and $y$.
\end{definition}
Note that if  $\FFF$ has uniform canonical coordinates at scale $(\epsilon,\delta)$ and $\delta'\in(0, \delta)$, then   $\FFF$ has uniform canonical coordinates at scale $(\epsilon,\delta')$.
\begin{remark}
Let $(M,\mathfrak{g})$ be a complete pinched negatively curved manifold. The geodesic flow on the universal cover $(\tilde M, \tilde{ \mathfrak{g}})$ has uniform canonical coordinates. This is the example we have in mind for our definition and we use this for our applications to geodesic flows. We emphasize that if $M$ has a cusp, then the `downstairs' geodesic flow over $(M,\mathfrak{g})$ does not have uniform canonical coordinates in the sense above. 
\end{remark}
\subsection{Holonomy maps}
Let $\FFF: Y \times \R \to Y$ be a continuous flow on a complete separable metric space $(Y,d)$ with uniform canonical coordinates at scale $(\epsilon, \delta)$. In particular, this applies when $(Y,d)$ is compact and has canonical coordinates at scale $(\epsilon, \delta)$. Given  $x,y \in Y$ with $d(x,y) \leq \delta$, the Bowen bracket satisfies
\[
\{\ang{x}{y}\} = \wss_\epsilon(x) \cap\wu_\epsilon(y).
\]
We use this to define the \emph{local $\wu$-holonomy maps with
$\wss$-transversals}, or simply the \emph{u-holonomy maps} on $\wss$.
\[
	h^u_{xy} : \wss_\delta(x) \to \wss_\epsilon(y), \qquad h^u_{xy}(x') = \ang{y}{x'} \in
	\wss_\epsilon(y)\cap \wu_\epsilon(x').
\] 
Similarly, we define the \emph{local $\wss$-holonomy maps with
$\wu$-transversals},  or simply \emph{ss-holonomy maps} on $\wu$, by
\[
	h^{ss}_{xy} : \wu_\delta(x) \to \wu_\epsilon(y), \qquad h^{ss}_{xy}(x') = \ang{x'}{y} \in \wss_\epsilon(x')
	\cap \wu_\epsilon(y).
\] 

\begin{figure}[ht]
  \centering
  \begin{tikzpicture}[>=Stealth,scale=1.15,
    leaf/.style={very thick},
    bg/.style={gray!45,thin},
    ss/.style={thick}]

  \foreach \a in {-0.575,0,0.575}{
    \draw[bg,smooth,samples=40,domain=-1.15:1.15,variable=\b]
      plot ({\a+0.45*\b},{0.12*\a+0.10*sin(\a*55)+0.55*\b+0.08*sin(\b*55)});}
  \foreach \b in {-0.575,0,0.575}{
    \draw[bg,smooth,samples=40,domain=-1.15:1.15,variable=\a]
      plot ({\a+0.45*\b},{0.12*\a+0.10*sin(\a*55)+0.55*\b+0.08*sin(\b*55)});}
  \foreach \a in {-1.15,1.15}{
    \draw[leaf,smooth,samples=40,domain=-1.15:1.15,variable=\b]
      plot ({\a+0.45*\b},{0.12*\a+0.10*sin(\a*55)+0.55*\b+0.08*sin(\b*55)});}
  \foreach \b in {-1.15,1.15}{
    \draw[leaf,smooth,samples=40,domain=-1.15:1.15,variable=\a]
      plot ({\a+0.45*\b},{0.12*\a+0.10*sin(\a*55)+0.55*\b+0.08*sin(\b*55)});}
  \node[below right] at ({0.6325},{-0.4766}) {$\wu_\delta(x)$};

  \foreach \a in {-0.8,0,0.8}{
    \draw[bg,smooth,samples=40,domain=-1.6:1.6,variable=\b]
      plot ({\a+0.45*\b+1},{0.12*\a+0.10*sin(\a*55)+0.55*\b+0.08*sin(\b*55)+3.05});}
  \foreach \b in {-0.8,0,0.8}{
    \draw[bg,smooth,samples=40,domain=-1.6:1.6,variable=\a]
      plot ({\a+0.45*\b+1},{0.12*\a+0.10*sin(\a*55)+0.55*\b+0.08*sin(\b*55)+3.05});}
  \foreach \a in {-1.6,1.6}{
    \draw[leaf,smooth,samples=40,domain=-1.6:1.6,variable=\b]
      plot ({\a+0.45*\b+1},{0.12*\a+0.10*sin(\a*55)+0.55*\b+0.08*sin(\b*55)+3.05});}
  \foreach \b in {-1.6,1.6}{
    \draw[leaf,smooth,samples=40,domain=-1.6:1.6,variable=\a]
      plot ({\a+0.45*\b+1},{0.12*\a+0.10*sin(\a*55)+0.55*\b+0.08*sin(\b*55)+3.05});}
  \node[above right] at ({3.32},{4.3019}) {$\wu_\epsilon(y)$};

  \foreach \p/\q in {-0.9/0.9,0.9/0.9,-0.9/-0.9}{
    \draw[bg,smooth,samples=40,domain=0:1,variable=\t]
      plot ({\p+0.45*\q+\t+0.34*sin(\t*180)},{0.12*\p+0.10*sin(\p*55)+0.55*\q+0.08*sin(\q*55)+3.05*\t});}

  \draw[ss,smooth,samples=40,domain=0:1,variable=\t]
    plot ({\t+0.7825+0.34*sin(\t*180)},{3.05*\t+0.0809});

  \node[right] at ({1.4025+0.34},{1.9719}) {$W^{ss}_\epsilon (x')$};

  \fill ({0},{0}) circle (1.7pt) node[below left] {$x$};
  \fill ({0.7825},{0.0809}) circle (1.7pt) node[below right] {$x'$};
  \fill ({1},{3.05}) circle (1.7pt) node[above left] {$y$};
  \fill ({1.7825},{3.1309}) circle (1.7pt)
        node[above right,xshift=-3pt] {$\ang{x'}{y}=h^{ss}_{xy}(x')$};

  \end{tikzpicture}
  \caption{The local $\wss$-holonomy map $h^{ss}_{xy}\colon \wu_\delta(x)\to\wu_\epsilon(y)$.}
  \label{fig:ss-holonomy}
\end{figure}

A holonomy map is H\"older if it is H\"older continuous in the metric induced from $d$ on its domain and codomain, and thus for any H\"older equivalent metric.

\subsection{Notions of H\"older continuity for smooth foliations} 
Let $\mathcal W$ be a $k$-dimensional foliation of a complete Riemannian manifold $M$.
\begin{definition}
We say that $\mathcal W$ is $\alpha$-H\"older if its tangent distribution
\[
x \longmapsto T_x\mathcal W
\]
is $\alpha$-H\"older as a section of the Grassmann bundle $\operatorname{Gr}_k(TM)$.
That is, there exists $C>0$ such that
\[
d_{\operatorname{Gr}}\bigl(T_x\mathcal W,T_y\mathcal W\bigr)
\leq C\, d(x,y)^\alpha
\]
for all sufficiently close $x,y\in M$, where $d_{\operatorname{Gr}}$ denotes a
metric on the Grassmann bundle induced by the Riemannian metric.
\end{definition}

 Following Pugh, Shub, and Wilkinson \cite{PSW}, a foliation $\mathcal{W}$ of a manifold
\emph{has H\"older holonomies} if for \emph{any} two nearby smooth transversals to $\mathcal{W}$ (with the transversals chosen to satisfy some reasonable basic control, see \cite[p.521]{PSW}, the holonomy map from one to the other along $\mathcal{W}$ is H\"older continuous with uniform constants. 

In \cite[Theorem 4]{HW}, it is shown that, if the leaves of a foliation $\WWW$ are uniformly
$C^2$
and $\WWW$ has H\"older continuous holonomies, then the tangent distribution to $\WWW$ is H\"older. While the paper \cite{HW} is focused on Anosov diffeomorphisms, their proof applies equally to foliations of pinched negative curvature Hadamard manifolds.

\subsection{Geodesic flow} Let $X$ be a proper $\CAT(-1)$ space. The space of parametrized bi-infinite geodesics in $X$,
denoted $GX$, carries a natural geodesic flow $\GGG = (g_t)_{t\in\R}$
and a standard metric $d_{GX}$, which for $c,c' \in GX$ is defined by
\[
	d_{GX}(c,c') = \int_{-\infty}^{\infty}d(c(t),c'(t))e^{-2\abs{t}}\,dt.
\]

If $X$ is a pinched negatively curved Hadamard manifold, then $(g_t): GX \to GX$
is naturally identified with the usual geodesic flow on the unit
tangent bundle which we also denote ${(g_t) : T^1 X \to T^1 X}$.
In this case, it is also natural to consider the distance $d_S$ induced by the Sasaki metric on $T^1X$, which is a natural Riemannian metric.  For a pinched negative curvature manifold, we have $d_{GX} \sim d_S$, see \S\ref{s.geodesicflow} for more details.

 For the geodesic flow over a negative curvature manifold, we let $\wguu$ denote the natural dynamical strong unstable foliation defined using the boundary at infinity and Busemann functions. This notation disambiguates this foliation from strong unstable set defined using the metric, since the two notions differ when the manifold has cusps. The subscript $\HHH$ reflects that the leaves of $\wguu$ are horospheres. By contrast, Clotet and Dal'Bo \cite{CD26} showed that a metric strong unstable set can contain many leaves of  $\wguu$, see also \cite{CTW}.

\section{Statement of Results} \label{sec:statement}

\subsection{General results} We now state the main results of our paper.

\begin{thmx}\label{prop.realstatement}
Let $\FFF: Y \times \R \to Y$ be a continuous flow on a complete separable metric space $(Y,d)$. Suppose that $\FFF$ has uniform canonical coordinates and satisfies the properties \eqref{eq.HW},
	\eqref{eq.HS}, and \eqref{eq.HF} with uniform constants $\alpha \in (0,1]$, 
	$\eta,\lambda,k > 0$, $C\geq 1$. Then the Bowen bracket of $\FFF$ is \ul{} continuous on 
	$(Y\times Y)_{\delta}$ for some $\delta > 0$. In particular, there exists $\epsilon, \delta >0$ so that for all $(x,y) \in (Y\times Y)_{\delta}$, the $u$-holonomy maps $h^u_{xy} : \wss_\delta(x) \to \wss_\epsilon(y)$ and $ss$-holonomy maps $h^{ss}_{xy} : \wu_\delta(x) \to \wu_\epsilon(y)$ are H\"older continuous with uniform H\"older exponents and constants.
\end{thmx}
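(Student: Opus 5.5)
The plan is to prove H\"older continuity of the bracket directly from the axioms. The key idea is that a small distance at bounded time forces a much smaller distance along stable or unstable leaves, and the mismatch is measured by comparing forward and backward orbits for a time $T$ chosen as a logarithmic function of the input distance. Fix $\epsilon$ small compared to $\eta$, and $\delta$ so that the uniform canonical coordinates hold at scale $(\epsilon,\delta)$. Take $(x,y),(x',y')\in (Y\times Y)_\delta$ with $\rho := \max\{d(x,x'),d(y,y')\}$ small. Set $z=\ang{x}{y}$ and $z'=\ang{x'}{y'}$, with time shifts $v=v(x,y)$ and $v'=v(x',y')$, so that $z\in \wss_\epsilon(x)\cap\wuu_\epsilon(f_vy)$. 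We aim for a bound $d(z,z')\le A\rho^{\gamma}$.

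\textbf{Step 1: forward and backward control.} For $0\le t\le T$ we estimate $d(f_tz,f_tz')$ by the triangle inequality through $f_tx$ and $f_tx'$. By \eqref{eq.HW}, $d(f_tz,f_tx)\le Ce^{-\lambda t}\epsilon^\alpha$, and similarly for $z',x'$. By \eqref{eq.HF}, $d(f_tx,f_tx')\le Ce^{kt}\rho^\alpha$. Likewise, for $t\le 0$ we go through $f_{t}f_vy$ and $f_tf_{v'}y'$. Here we also need $d(f_vy,f_{v'}y')\le d(f_vy,f_vy')+d(f_vy',f_{v'}y')$, and the last term is at most $C|v-v'|^\alpha$ by \eqref{eq.HS}. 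So Step 1 needs an a priori estimate on $|v-v'|$. The estimate is that uniform continuity of $v$ (and of the bracket) follows from uniqueness of canonical coordinates plus a compactness-free argument using the uniform constants. Concretely, if the orbits of $z,z'$ stay $\epsilon$-close for all time on a long interval, expansivity-type uniqueness forces closeness.

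\textbf{Step 2: bootstrapping via uniqueness.} This step is the main obstacle, and the plan is as follows. Choose $T=\theta\log(1/\rho)$ with $\theta$ small, so that $e^{kT}\rho^\alpha$ is a positive power of $\rho$ and $e^{-\lambda T}$ is also a power of $\rho$. Then $f_Tz,f_Tz'$ are $\rho^{\beta}$-close, and $f_{-T}z,f_{-T}z'$ are $\rho^{\beta}$-close up to the $|v-v'|$ term. Apply the bracket at the points $f_Tz,f_Tz'$. The point $w=\ang{f_Tz}{f_Tz'}$ (or its flow-shifted analogue) is obtained using the canonical coordinates at scale $\rho^\beta$-small. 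Pulling $w$ back by $f_{-T}$, it lies on $\wss(z)$ and on the weak unstable of $z'$. Alternatively, one shows $f_{-T}w$ is close to both $z$ (contraction forward on stable) and $z'$. More cleanly, the plan is to prove a ``shadowing lemma'': if $d(f_tp,f_tq)\le\sigma$ for $|t|\le T$ with $p,q$ on suitable leaves, then $d(p,q)\le C'(e^{-\lambda T}+\sigma)^{\alpha^2}$. This is done by decomposing $q$ relative to $p$ via $\ang{p}{q}$. The strong-stable piece is controlled by \eqref{eq.HW} backward from time $T$, i.e.\ using the unstable estimate applied at $f_T$ to the unstable component, and symmetrically for the other piece. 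The flow component is controlled by the lower bound in \eqref{eq.HS}. Obtaining $|v-v'|$ small also comes out of this lemma via the $|t|^{1/\alpha}$ lower bound in \eqref{eq.HS}, which converts spatial closeness along the orbit into closeness of time shifts. This closes the loop and gives $d(z,z')\le A\rho^\gamma$ with $\gamma$ a product of powers of $\alpha$ and $\lambda/k$, and all constants are uniform since only uniform constants were used.

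\textbf{Step 3: holonomies.} The holonomy statements follow immediately. The maps $h^u_{xy}(x')=\ang{y}{x'}$ and $h^{ss}_{xy}(x')=\ang{x'}{y}$ are restrictions of the bracket to slices $\{y\}\times\wss_\delta(x)$ and $\wu_\delta(x)\times\{y\}$. These slices lie in $(Y\times Y)_{\delta'}$ for the appropriate $\delta'$ after shrinking $\delta$, using $d(x',y)\le d(x',x)+d(x,y)$. Hence they inherit uniform H\"older exponent $\gamma$ and constant $A$.
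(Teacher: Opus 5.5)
\textbf{There is a genuine gap in Step 2.} Your forward estimate in Step 1 and your Step 3 are fine. Step 2, however, carries the whole content of the theorem, and there the key lemma is misformulated and the argument is never supplied.

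\emph{The shadowing lemma is vacuous or useless.} It assumes $d(f_tp,f_tq)\le\sigma$ for all $|t|\le T$, and this includes $t=0$. So for small $\sigma$ the conclusion is already contained in the hypothesis. For $p=z$, $q=z'$ you cannot verify it at $t=0$: a priori you only know $d(z,z')\le 2\epsilon+\rho$, which is why Step 1 gives smallness only at times away from $0$. If $\sigma$ is instead a fixed scale, the $\sigma$-term cannot be removed. Take $q=f_sp$ with small fixed $s\neq 0$: by \eqref{eq.HS} the orbits stay $C|s|^\alpha$-close for all time, while $d(p,q)\ge C^{-1}|s|^{1/\alpha}$. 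So the flow-direction displacement must be extracted from smallness of the orbits at a \emph{specific intermediate time}, and your outline never identifies one.

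\emph{Other unresolved points.}
\begin{itemize}
\item The treatment of $|v-v'|$ is circular. You need it small for backward control in Step 1, then plan to derive it from Step 2.
\item Invoking uniform continuity of $v$ ``from uniqueness of canonical coordinates'' is unavailable without compactness. A uniform modulus is exactly what the theorem produces.
\item $\ang{z}{z'}$ need not be defined. $d(z,z')$ may be of order $2\epsilon$, while the bracket is only defined at scale $\delta=\delta(\epsilon)$, typically much smaller.
\end{itemize}

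\textbf{The missing idea.} You never need the two orbits to be \emph{small}-close at the far time. You need only that the two points lie on a common strong leaf and are $\eta$-close there at time $\mp T$; then \eqref{eq.HW} supplies the factor $e^{-\lambda T}$. Since $z,z'$ share no leaf, the paper splits into the two variables through the intermediate point $\ang{x}{y'}$.

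\emph{Second variable (fixed $x$).} The points $\ang{x}{y_1},\ang{x}{y_2}$ lie on the same strong stable set. Take $T\approx -\frac{\alpha}{k}\log d(y_1,y_2)$, chosen via \eqref{eq.HF}. At time $-T$ their distance is at most $2\epsilon+2C\epsilon^\alpha+2\delta\le\eta$, using only $|v_i|\le\epsilon$ and the upper bound in \eqref{eq.HS}. Forward contraction then gives exponent $\frac{\lambda\alpha}{k}$, with no control on $v_1-v_2$ needed.

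\emph{First variable (fixed $y$).} The points $w_i=\ang{x_i}{y}$ lie on $\wu(y)$. With $v=v_1-v_2\ge 0$, the point $f_{-v}w_1$ lies on $\wuu(w_2)$, and the time-reversed argument bounds $d(f_{-v}w_1,w_2)$. The shift $v$ itself is bounded by the lower bound in \eqref{eq.HS} at time $T/2$. At that time three quantities are simultaneously small:
\begin{itemize}
\item the unstable distance, contracted back from time $T$;
\item the stable distances $d(f_{T/2}w_i,f_{T/2}x_i)$;
\item $d(f_{T/2}x_1,f_{T/2}x_2)$, by \eqref{eq.HF}.
\end{itemize}

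Your plan could be repaired along these lines by using $\ang{x}{y'}$ in place of $\ang{z}{z'}$. As written, though, the decisive step is not proved.
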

The following statements are immediate corollaries.
\begin{corollary}\label{MainTheorem}
	Suppose that $\FFF$ is a $C^\alpha$ metric Anosov flow on a compact metric space $(Y,d)$.
	Then there exists $\delta >0$ so that the Bowen bracket of $\FFF$ is H\"older continuous on 
	$(Y\times Y)_{\delta}$. 
\end{corollary}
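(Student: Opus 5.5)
The plan is to reduce Corollary \ref{MainTheorem} to Theorem \ref{prop.realstatement}; the only work is to check that the hypotheses of that theorem are satisfied. First, a compact metric space $(Y,d)$ is complete and separable: every Cauchy sequence has a convergent subsequence, hence converges, and finite $1/n$-nets give a countable dense set. So $\FFF$ is a continuous flow on a complete separable metric space, as Theorem \ref{prop.realstatement} requires.

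Next, I would check that canonical coordinates in the sense of Definition \ref{def.cc} are uniform canonical coordinates in the sense of Definition \ref{def.ccu}. The two definitions are verbatim identical. In the compact setting the pair $(\epsilon,\delta)$ is already one choice valid for all $x,y\in Y$, so the uniformity over the space required by Definition \ref{def.ccu} holds automatically.

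Similarly, Definition \ref{def.Calpha} provides constants $\alpha\in(0,1]$, $\eta,\lambda,k>0$ and $C\ge 1$ for which \eqref{eq.HW}, \eqref{eq.HF} and \eqref{eq.HS} hold for all points of $Y$. These are exactly the uniform constants demanded in Theorem \ref{prop.realstatement}. This covers both the case $0<\alpha<1$ and the Lipschitz case $\alpha=1$.

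Theorem \ref{prop.realstatement} then gives a $\delta>0$ such that the Bowen bracket is \ul{} on $(Y\times Y)_\delta$. That is, there are $A,\delta_0>0$ and $\gamma\in(0,1]$ with
\[
d(\ang{x}{y},\ang{x'}{y'})\le A\,\max\{d(x,x'),d(y,y')\}^\gamma
\]
whenever both pairs lie in $(Y\times Y)_\delta$ and are within $\delta_0$ of each other in the max metric. The last step is to pass from this local estimate to global H\"older continuity on $(Y\times Y)_\delta$. For pairs at distance at least $\delta_0$, boundedness does the job: $Y$ is compact, so $D=\diam Y<\infty$, and the left side is at most $D\le (D\delta_0^{-\gamma})\,\delta_0^\gamma$. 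Hence the constant $\max\{A, D\delta_0^{-\gamma}\}$ works for all pairs.

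None of these steps is a real obstacle. The only points needing care are the observation that compactness makes the constants uniform, and this final local-to-global step.
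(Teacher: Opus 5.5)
Your proposal is correct and takes the same route as the paper, which states this corollary as an immediate consequence of Theorem \ref{prop.realstatement}: compactness gives completeness, separability, and uniformity of both the canonical-coordinate scale and the constants in \eqref{eq.HW}, \eqref{eq.HF}, \eqref{eq.HS}. Your extra step, which upgrades the uniformly local H\"older bound to a global one on $(Y\times Y)_\delta$ using $\diam Y<\infty$, is exactly the detail the paper leaves implicit.
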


\begin{corollary} \label{Holderholonomy}
	Suppose that $\FFF$ is a $C^\alpha$ metric Anosov flow on a compact metric space $(Y,d)$.
	Then the u-holonomy maps on $\wss$ and the ss-holonomy maps on $\wu$ are H\"older continuous with uniform H\"older exponents and constants.

\end{corollary}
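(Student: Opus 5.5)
Corollary \ref{Holderholonomy} follows directly from Theorem \ref{prop.realstatement}; equivalently, from Corollary \ref{MainTheorem} together with the formulas defining the holonomy maps. The plan has three steps.

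First, we check that a $C^\alpha$ metric Anosov flow on a compact metric space $(Y,d)$ meets the hypotheses of Theorem \ref{prop.realstatement}. A compact metric space is complete and separable. In the compact case, canonical coordinates at scale $(\epsilon,\delta)$ (Definition \ref{def.cc}) are verbatim the same as uniform canonical coordinates (Definition \ref{def.ccu}). Finally, Definition \ref{def.Calpha} supplies \eqref{eq.HW}, \eqref{eq.HF} and \eqref{eq.HS} with single constants $\alpha,\eta,\lambda,k,C$ valid on all of $Y$, so these constants are automatically uniform.

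Second, Theorem \ref{prop.realstatement} then gives $\epsilon,\delta>0$ such that $\ang{\cdot}{\cdot}$ is \ul{} on $(Y\times Y)_\delta$. Concretely, there are $A>0$, $\gamma\in(0,1]$ and $\delta_0>0$ such that
\[
d\bigl(\ang{x}{y},\ang{x'}{y'}\bigr)\le A\,\max\{d(x,x'),d(y,y')\}^\gamma
\]
whenever $(x,y),(x',y')\in(Y\times Y)_\delta$ are within $\delta_0$ of each other in the max metric.

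Third, we express the holonomy maps as restrictions of the bracket. We have $h^u_{xy}(x')=\ang{y}{x'}$, which is the bracket restricted to the slice $\{y\}\times\wss_\delta(x)$. Likewise $h^{ss}_{xy}(x')=\ang{x'}{y}$ is its restriction to $\wu_\delta(x)\times\{y\}$. Each slice is an isometric copy of its domain in the max metric, since one coordinate is fixed. The only bookkeeping is to shrink $\delta$ so the domains lie inside $(Y\times Y)_\delta$. For $x'\in\wss_\delta(x)$ we get $d(y,x')\le d(y,x)+d(x,x')\le 2\delta$, so we replace $\delta$ by $\delta/2$. For $x'\in\wu_\delta(x)$, a point of $\wuu_\delta(f_vx)$ with $|v|\le\delta$, we get $d(x',x)\le\delta+C\delta^\alpha$ by \eqref{eq.HS}, so we shrink $\delta$ accordingly. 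With this done, the H\"older estimate for the holonomies holds with exponent $\gamma$ and constant $A$ at scale $\delta_0$, independent of $x,y$. This is exactly the uniformity claimed.

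There is no real obstacle. The only point needing care is the scale adjustment in the third step, so that the bracket is defined on all pairs that arise.
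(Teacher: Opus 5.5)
Your proposal is correct and matches the paper, which treats this as an immediate corollary of Theorem \ref{prop.realstatement} (whose statement already includes the holonomy conclusion). As you note, compactness makes canonical coordinates, and the constants in \eqref{eq.HW}, \eqref{eq.HF}, \eqref{eq.HS}, automatically uniform. Your slice-restriction step is exactly what Propositions \ref{lem.2ndvar} and \ref{lem.1stvar} give for $h^u_{xy}$ and $h^{ss}_{xy}$ respectively. Your scale bookkeeping (halving $\delta$, and using \eqref{eq.HS} with $\delta\le\eta$ for the $\wu_\delta$ domain) is the right care to take.
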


If $\FFF$ is a $C^\alpha$ metric Anosov flow, then so is
its time-reversed flow, which we denote $f^\iota_tx = f_{-t}x$.
The Bowen bracket $\ang{\,\cdot\,}{\,\cdot\,}^\iota$ of $f^\iota$ is given by
\[
	\{\ang{x}{y}^\iota\} = \wuu_\epsilon(x) \cap \ws_\epsilon(y).
\]
The reverse Bowen bracket can be used to define the \emph{s-holonomy maps on $\wuu$}
and \emph{uu-holonomy maps on $\ws$}. Applying Theorem \ref{prop.realstatement} to $f^\iota$, we
obtain the following complementary results.
\begin{corollary}\label{cor.realstatementi}
	Suppose that $\FFF$ satisfies the hypotheses of Proposition \ref{prop.realstatement}. Then for any
	small enough $\epsilon > 0$ there exists some $\delta > 0$ such that the Bowen bracket 
	$\ang{\,\cdot\,}{\,\cdot\,}^\iota$
	given by
	\[
		\{\ang{x}{y}^\iota\} = \wuu_\epsilon(x) \cap \ws_\epsilon(y)
	\]
	is \ul{} continuous on $(Y\times Y)_{\delta}$.
	In particular, the s-holonomies on $\wuu$ and the
	uu-holonomies on $\ws$ are H\"older continuous with uniform constants and at a uniform
	local scale.
\end{corollary}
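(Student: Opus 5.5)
The plan is to deduce Corollary \ref{cor.realstatementi} directly from Theorem \ref{prop.realstatement} applied to the time-reversed flow $f^\iota_t = f_{-t}$, after identifying the objects attached to $f^\iota$ with the objects in the statement. The first step is bookkeeping on stable and unstable sets. Directly from the definitions, $\wss(y;d)$ for $f^\iota$ equals $\wuu(y;d)$ for $\FFF$, and conversely; the local versions $\wss_\epsilon$ and $\wuu_\epsilon$ swap in the same way, since the condition $d(f^\iota_t x, f^\iota_t y)\le\epsilon$ for $t\ge 0$ is the condition $d(f_tx,f_ty)\le \epsilon$ for $t\le 0$. Since $f^\iota_v = f_{-v}$ and the range $\abs{v}\le \epsilon$ is symmetric, the weak sets $\ws_\epsilon$ and $\wu_\epsilon$ also swap.

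The second step is to check that $f^\iota$ satisfies the hypotheses of Theorem \ref{prop.realstatement} with the same constants. \eqref{eq.HW} is symmetric under swapping the two lines and reversing time, and \eqref{eq.HF} and \eqref{eq.HS} are invariant under $t\mapsto -t$. For uniform canonical coordinates, take $d(x,y)\le\delta$ and apply the canonical coordinates of $\FFF$ to the pair $(y,x)$. This gives a unique $v$ with $\wss_\epsilon(y)\cap\wuu_\epsilon(f_vx)=\{z\}$, so $z \in \ws_\epsilon(y)$ and $f_{-v}z\in \wuu_\epsilon(f_{-v}\,\cdot)$-position relative to $x$. We need a point of $\wuu_\epsilon(x)\cap \wss_{\epsilon'}(f^\iota_{w}y)$ for a unique $\abs{w}\le \epsilon'$. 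The natural candidate is $f_{-v}z$ with $w=v$. It lies in $\wuu(x)$, and it lies in $\wss(f_{-v}y)$, which is the $f^\iota$-strong unstable set.

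Here I expect the only real subtlety. Flowing by $f_{-v}$ spoils the exact local scale $\epsilon$, because $d(f_{-v}z', f_{-v}z)$ is not literally bounded by $\epsilon$. I would control this using \eqref{eq.HF}: it bounds distances after flowing for time $\abs{v}\le\epsilon$ by $C e^{k\epsilon}\epsilon^\alpha$. So the local sets at scale $\epsilon$ for $\FFF$ land in local sets at a slightly larger scale $\epsilon'$ for $f^\iota$. Uniqueness at scale $\epsilon'$ then follows by running the same argument backwards and using uniqueness for $\FFF$ at a comparable scale, shrinking $\delta$ as needed. This is why the corollary is phrased for ``any small enough $\epsilon$ there exists $\delta$''.

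With this in place, Theorem \ref{prop.realstatement} applied to $f^\iota$ gives \ul{} continuity of $\ang{\cdot}{\cdot}^\iota$ on $(Y\times Y)_\delta$, where $\{\ang{x}{y}^\iota\}=\wuu_\epsilon(x)\cap\ws_\epsilon(y)$. The holonomy maps of $f^\iota$ are then exactly the s-holonomies on $\wuu$ and the uu-holonomies on $\ws$ of $\FFF$. Their H\"older continuity, with uniform constants and at a uniform local scale, is the second conclusion of the theorem for $f^\iota$.
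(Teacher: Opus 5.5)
Your proposal is correct and follows the paper's route. The paper proves (Lemma \ref{lem.reverse}) that $f^\iota$ satisfies the hypotheses of Theorem \ref{prop.realstatement}: \eqref{eq.HW}, \eqref{eq.HF}, \eqref{eq.HS} are reversible, and uniform canonical coordinates follow from $\ang{x}{y}^\iota = f_{-v}\ang{y}{x}$ after a small change of scale, after which the theorem is applied to $f^\iota$. Your explicit use of \eqref{eq.HF} to control that change of scale is exactly what the paper's appeal to ``uniform continuity of $f$'' amounts to in the non-compact setting.
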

\begin{corollary}\label{ComplementaryResult}
	Suppose that $\FFF$ is a $C^\alpha$ metric Anosov flow. Then 
	the Bowen bracket $\ang{\,\cdot\,}{\,\cdot\,}^\iota$
	is H\"older continuous on a neighborhood of the diagonal of $Y \times Y$.
	In particular, the s-holonomies on $\wuu$ and the uu-holonomies on $\ws$ are 
	H\"older continuous with uniform H\"older exponents and constants.
\end{corollary}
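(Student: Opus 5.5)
The statement is Corollary~\ref{ComplementaryResult}. I would deduce it directly from Corollary~\ref{cor.realstatementi}, in the same way Corollaries~\ref{MainTheorem} and~\ref{Holderholonomy} follow from Theorem~\ref{prop.realstatement}. The point is that a $C^\alpha$ metric Anosov flow on a compact space automatically satisfies the hypotheses of Theorem~\ref{prop.realstatement} with uniform constants. Indeed, a compact metric space is complete and separable. Canonical coordinates on a compact space are uniform canonical coordinates, since Definitions~\ref{def.cc} and~\ref{def.ccu} are verbatim the same and the constants are global. Finally, \eqref{eq.HW}, \eqref{eq.HF} and \eqref{eq.HS} hold with single constants $\alpha,\eta,\lambda,k,C$ by Definition~\ref{def.Calpha}.

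To make the appeal to Corollary~\ref{cor.realstatementi} honest, I would verify the claim that the time-reversal $f^\iota_t=f_{-t}$ again satisfies the hypotheses.
\begin{itemize}
\item The strong stable and strong unstable sets swap: $W^{\mathrm{ss}}(y;f^\iota)=\wuu(y;f)$, and likewise for the local sets at scale $\epsilon$.
\item The two inequalities in \eqref{eq.HW} swap roles.
\item \eqref{eq.HF} is symmetric under $t\mapsto -t$, and so is \eqref{eq.HS}, since $d(y,f_{-t}y)$ is controlled by $\abs{t}$.
\end{itemize}

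The only real step is canonical coordinates for $f^\iota$. Given $d(x,y)\le\delta$, we need a unique $\abs{v}\le\epsilon$ with $\wuu_\epsilon(x)\cap\wss_\epsilon(f_vy)$ a singleton. I would get this from the canonical coordinates of $f$ applied to the reversed pair $(y,x)$. That gives a point $z\in \wss_\epsilon(y)\cap\wuu_\epsilon(f_{v'}x)$, and flowing by $-v'$ produces $f_{-v'}z\in \wuu_{\epsilon'}(x)\cap \wss_{\epsilon'}(f_{-v'}y)$. Here $\epsilon'$ is slightly larger than $\epsilon$; by \eqref{eq.HF} with $\abs{v'}\le\epsilon$, it can be taken as $\epsilon'=Ce^{k\epsilon}\epsilon^\alpha$.

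Uniqueness transfers the same way. Two candidate times or points for $f^\iota$ would, after flowing back, give two candidates for $f$ at a comparable scale. Since canonical coordinates hold for every small scale, shrinking $\epsilon$ and $\delta$ absorbs this loss. I expect this scale bookkeeping to be the main (mild) obstacle; the text before Corollary~\ref{cor.realstatementi} already asserts it.

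With these facts in hand, Corollary~\ref{cor.realstatementi} gives $\epsilon,\delta>0$ such that $\ang{\cdot}{\cdot}^\iota$ is \ul{} on $(Y\times Y)_\delta$, and this set is a neighborhood of the diagonal. The s-holonomies on $\wuu$ are $x'\mapsto\ang{x'}{y}^\iota$, and the uu-holonomies on $\ws$ are $x'\mapsto\ang{y}{x'}^\iota$. Each is the restriction of the bracket to a slice $\{\cdot\}\times\{y\}$ or $\{y\}\times\{\cdot\}$. The max metric on $(Y\times Y)_\delta$ restricts to $d$ on such a slice, so these holonomies inherit the same exponent and constant uniformly.
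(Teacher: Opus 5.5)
Your proposal is correct and follows the paper's route: the paper applies Theorem \ref{prop.realstatement} to $f^\iota$, and Lemma \ref{lem.reverse} does exactly your transfer of existence and uniqueness of canonical coordinates by flowing by $\pm v$ (the paper absorbs the scale loss via uniform continuity of the flow, you via \eqref{eq.HF}); passing from \ul{} to H\"older continuity on $(Y\times Y)_\delta$ then needs only boundedness of $Y$. One harmless slip: since $\ang{a}{b}^\iota \in \wuu_\epsilon(a)\cap\ws_\epsilon(b)$, the s-holonomy on $\wuu$ is $x'\mapsto\ang{y}{x'}^\iota$ and the uu-holonomy on $\ws$ is $x'\mapsto\ang{x'}{y}^\iota$, the reverse of your labels, but this does not affect the argument because both maps are slices of the bracket.
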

Our next main result investigates regularity of topological conjugacies of $C^\alpha$ metric Anosov flows.

\begin{thmx}\label{Conjugacy}
	Suppose that $\FFF$ and $\FFF'$ are $C^\alpha$ metric Anosov flows on $(Y,d)$ and $(Y',d')$,
	respectively, and suppose that $\psi : Y \to Y'$ is a topological conjugacy between $\FFF$
	and $\FFF'$. Then $\psi$ is bi-H\"older. 
\end{thmx}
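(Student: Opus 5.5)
By symmetry (apply the same argument to $\psi^{-1}$, which conjugates $\FFF'$ to $\FFF$), it suffices to show that $\psi$ is H\"older. The plan is to factor $\psi$ through the Bowen bracket. For $x,y$ close, I write $z=\ang{x}{y}$, so that $z\in\wss_\epsilon(x)$ and $z=f_{v}y'$ with $y'\in\wuu_\epsilon(y)$ and $\abs{v}\le\epsilon$. Then
\[
d'(\psi x,\psi y)\le d'(\psi x,\psi z)+d'(\psi z,\psi f_v y')+d'(\psi f_v y',\psi y').
\]
There is one more term, $d'(\psi y',\psi y)$. By Corollary \ref{MainTheorem}, each of $d(x,z)$, $d(z,y')$, $\abs{v}$ (via \eqref{eq.HS}) and $d(y',y)$ is bounded by a power of $d(x,y)$. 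So I need H\"older control of $\psi$ separately along three directions: strong stable leaves, strong unstable leaves, and flow lines.

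\textbf{Step 1: orbit direction.} A topological conjugacy has the form $\psi f_t=f'_{\tau(x,t)}\psi$. I would first reduce to an orbit-equivalence with a continuous time change. Standard usage of ``conjugacy'' means $\tau(x,t)=t$, in which case \eqref{eq.HS} for both flows gives
\[
d'(\psi f_v y',\psi y')=d'(f'_v\psi y',\psi y')\le C\abs{v}^\alpha\le C^{1+\alpha}d(f_vy',y')^{\alpha^2}.
\]
This handles the flow direction.

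\textbf{Step 2: stable and unstable directions.} The conjugacy maps $\wss$ to $\wss'$, since asymptotic behaviour is preserved by uniform continuity on a compact space. Take $x\in\wss_\eta(z)$ with $d(x,z)=r$ small. Choose the time $T\approx \frac{1}{k'}\log(1/r)\cdot\theta$ and compare at time $-T$ versus forward times.

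Because the leaves contract, I use a dichotomy. Uniform continuity of $\psi$ gives a $\rho$ with $d(a,b)\le\rho\Rightarrow d'(\psi a,\psi b)\le\eta'$. Run forward by $t\ge0$: by \eqref{eq.HW}, $d(f_tx,f_tz)\le Cr^\alpha$ stays small for all $t\ge0$. Going backward, by \eqref{eq.HF}, $d(f_{-t}x,f_{-t}z)\le Ce^{kt}r^\alpha\le\rho$ for $t\le T:=\frac{1}{k}\log(\rho/(Cr^\alpha))$. Hence the points $\psi f_{-T}x$ and $\psi f_{-T}z$ are $\eta'$-close and lie on the same local strong stable leaf of $\FFF'$. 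Applying \eqref{eq.HW} for $\FFF'$ for time $T$ gives
\[
d'(\psi x,\psi z)\le C'e^{-\lambda' T}(\eta')^{\alpha}\lesssim r^{\alpha\lambda'/k},
\]
which is H\"older. The point needing care is that the pair stays on a common local strong stable leaf of size $\eta$ in the $\FFF'$ sense along the whole orbit segment. This follows from uniqueness in the canonical coordinates together with continuity of $\psi$, as long as $\rho$ is chosen small. The unstable case is symmetric, using time reversal.

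\textbf{Main obstacle.} I expect the main obstacle to be Step 1 if $\psi$ is only an orbit equivalence. In that case one must show that the time change $\tau$ is H\"older along orbits. The approach I would try first is to use that $\psi$ maps weak stable/unstable sets and brackets to brackets, so $\psi\ang{x}{y}=\ang{\psi x}{\psi y}'$. This would then reduce the problem to Step 2 plus the uniform continuity of $\tau$, which is controlled by \eqref{eq.HS}. Assembling the four estimates with Corollary \ref{MainTheorem} gives $d'(\psi x,\psi y)\le A\,d(x,y)^\gamma$ for $d(x,y)$ small, and compactness extends this to all of $Y$.
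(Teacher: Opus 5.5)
Your proposal is correct. Its core is the same as the paper's: in Step 2 you flow for the logarithmic time $T=\frac1k\log(\rho/(Cr^\alpha))$ so the pair stays $\rho$-close, transport to $\FFF'$ by uniform continuity, and apply \eqref{eq.HW} for $\FFF'$. That is exactly the estimate the paper carries out on $\wuu_\tau(y)$ via Lemma \ref{lem.logclose}.

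The difference is in how the local pieces are assembled.
\begin{itemize}
\item The paper uses the bi-H\"older chart $\Phi_y(x,t,z)=\ang{f_tx}{z}$ of Lemma \ref{lem.homeo}. It shows $\psi\circ\Phi_y=\Phi_{\psi(y)}\circ(\psi\times\mathrm{id}\times\psi)$, which uses uniqueness of the canonical coordinates of $\FFF'$. It then needs H\"older continuity of both $\Phi_y^{-1}$ and $\Phi_{\psi(y)}$.
\item You instead apply the triangle inequality in $Y'$ along the path $x\to z=\ang{x}{y}\to y'=f_{-v}z\to y$. You only need the three legs to be H\"older-small in $d(x,y)$ on the $\FFF$ side, plus \eqref{eq.HW} and the upper bound in \eqref{eq.HS} for $\FFF'$.
\end{itemize}
Your route therefore never uses canonical coordinates or \eqref{eq.HF} of $\FFF'$, so it proves the one-sided H\"older statement under weaker hypotheses on the target. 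The fact that $f'_{-T}\psi x$ lies on the $\eta'$-local strong stable set of $f'_{-T}\psi z$ follows directly from $\psi f_t=f'_t\psi$ and uniform continuity. It does not come from uniqueness of canonical coordinates, as you suggest. In exchange, the paper's route exhibits $\psi$ as a product map in local charts and reuses Lemma \ref{lem.homeo}, which the paper needs anyway for the Markov coding.

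Two corrections.

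First, Corollary \ref{MainTheorem} alone does not justify your claim that $\abs{v}$ and $d(y',y)$ are bounded by a power of $d(x,y)$. It controls $d(x,z)$ and $d(z,y)$, but not how the $\wu$-displacement from $y$ to $z$ splits into the flow part $v$ and the $\wuu$ part $d(y',y)$. Using \eqref{eq.HS} to bound $\abs{v}$ by $d(z,y')$ is circular without a separate bound on $d(y',y)$. Cite instead either of the following:
\begin{itemize}
\item Corollary \ref{ComplementaryResult}: up to scale adjustments, $y'=\ang{y}{x}^\iota$ and $y=\ang{y}{y}^\iota$.
\item The bound \eqref{eq:estonv2} from the proof of Proposition \ref{lem.1stvar}, applied with $x_1=x$, $x_2=y$, using $v(y,y)=0$.
\end{itemize}

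Second, drop the orbit-equivalence discussion. It is not needed for the statement, and the conclusion you hope for there is false. An orbit equivalence such as $x\mapsto f_{\sigma(x)}x$, with $\sigma$ continuous but not H\"older, need not be H\"older. It also does not preserve strong stable sets, so $\psi\ang{x}{y}=\ang{\psi x}{\psi y}'$ fails. This is why the paper's remark only expects a nearby bi-H\"older orbit equivalence.
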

\begin{remark} If $\psi : Y \to Y'$ is a topological orbit equivalence between $\FFF$
	and $\FFF'$, then we expect that for every $\epsilon > 0$ there exists a bi-H\"older orbit equivalence 
	$\psi' : Y \to Y'$ such that $d'(\psi(y),\psi'(y)) < \epsilon$ for all $y \in Y$. See the argument of \cite[Theorem 6.4.3]{FH19}.
\end{remark}
Our next main result gives a description of $C^\alpha$ metric Anosov flows in terms of symbolic dynamics. Our statement also collects some consequences of this.
\begin{thmx} \label{Coding}
	Suppose that $\FFF$ is a $C^\alpha$ metric Anosov flow. Then $\FFF$ has a strong
	Markov coding. If $\FFF$ is transitive, then so is the strong Markov coding. When $\FFF$ is transitive, it thus follows that an equilibrium state for a H\"older
	potential is unique and satisfies the Almost Sure Invariance Principle,  the Law of the Iterated
	Logarithm, and the Central Limit Theorem. If the lengths of periodic orbits are not contained in
	$c \Z$, then it has the Bernoulli property.
\end{thmx}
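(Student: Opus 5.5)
The plan is to verify the abstract criteria of \cite{CLT19} for the existence of a strong Markov coding, using Corollaries \ref{MainTheorem}--\ref{ComplementaryResult} as the main input. Recall that Pollicott's extension of Bowen's construction \cite{rB73, Pollicott} produces, for any metric Anosov flow, a finite family of small local cross-sections $D_i$ with rectangles $R_i=\ang{\,\cdot\,}{\,\cdot\,}$-products inside them, a Poincar\'e return map $P$ on $\bigcup R_i$ and a return time $r$. These give a suspension flow over a subshift of finite type $(\Sigma_A,\sigma)$ with roof $r\circ\pi$, together with a semiconjugacy $\pi$. The criteria of \cite{CLT19} require, roughly, three things: the cross-sections can be chosen so that the local projection onto $D_i$ along the flow is H\"older; the bracket restricted to the sections is H\"older; and the flow has H\"older contraction and expansion along strong leaves. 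Given these, the return time is H\"older on rectangles and $\pi$ is H\"older for the usual metric $d_\theta$ on $\Sigma_A$.

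First I will construct H\"older cross-sections. For $x\in Y$ I take
\[
D(x)=\{\ang{y}{z}^\iota \text{-type products}\}=\bigl\{\ang{y'}{z'} : y'\in \wu_\delta(x)\text{ transverse part},\ z'\in\wss_\delta(x)\bigr\},
\]
concretely $D(x)=\{\ang{a}{b}: a\in \wss_\delta(x),\ b\in \wuu_\delta(x)\}$. The local flow-box map $(y,t)\mapsto f_t y$ from $D(x)\times[-\eta,\eta]$ is then a homeomorphism onto a neighborhood of $x$. The projection $\Proj_{D(x)}$ and time function $\tau$, defined by $z=f_{\tau(z)}\Proj(z)$, are obtained from the bracket and the function $v(x,y)$ of Definition \ref{def.cc}. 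Corollary \ref{MainTheorem} gives H\"older continuity of the bracket. For $v$, I will use \eqref{eq.HS}: $d(\ang{x}{y},f_vy)$ and $d(y,\ang{x}{y})$ are H\"older in the inputs, and the lower bound $\abs{t}^{1/\alpha}\le C d(y,f_ty)$ converts distance control into time control. Hence $\tau$ and $\Proj$ are H\"older. With these sections, Bowen's rectangles and the Markov refinement proceed verbatim as in \cite{Pollicott}.

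Next I will show H\"older regularity of the coding. If two sequences agree on $[-n,n]$, the corresponding points lie in a common rectangle and their $P^j$-images stay in common rectangles for $\abs{j}\le n$. I bracket them and use \eqref{eq.HW} for $P^n$ and $P^{-n}$: return times are bounded below, so $n$ returns means time at least $cn$, giving distance $\lesssim e^{-\lambda c n\alpha'}$. Here the exponent $\alpha'$ is degraded by the bracket's H\"older exponent, and Corollaries \ref{Holderholonomy} and \ref{ComplementaryResult} are used to pass back through holonomies. This gives $\pi$ H\"older for $d_\theta$ with suitable $\theta$. The return time $r$ is H\"older on each rectangle because it is locally $\tau\circ f_{r_0}$, using \eqref{eq.HF} over bounded time and the H\"older $\tau$. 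Thus $r\circ\pi$ is H\"older, and the coding is strong.

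For the consequences, transitivity of $\FFF$ passes to an irreducible (transitive) component carrying full measure, by the standard argument of Bowen and Pollicott. A H\"older potential pulls back to a H\"older potential on the suspension. Uniqueness of equilibrium states, the ASIP, LIL and CLT then follow from Ratner, Melbourne--Nicol and Denker--Philipp for H\"older suspensions over SFTs, and Bernoulli follows from Ratner when the roof is not cohomologous to a function in $c\Z$, which is the non-lattice periodic lengths condition. The main obstacle is the first step, obtaining H\"older sections and H\"older $\tau$ from \eqref{eq.HS} with only H\"older (not Lipschitz) speed control. I will attempt it by bounding $\abs{v(x,y)}^{1/\alpha}\le C d(\ang{x}{y},f_{v}\ang{x}{y}\ldots)$, chaining the triangle inequality with Corollary \ref{MainTheorem}.
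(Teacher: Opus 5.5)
Your overall plan matches the paper's: sections built from the Bowen bracket, H\"older flow-box projections, H\"older return times via the lower bound in \eqref{eq.HS}, then the Pollicott/\cite{CLT19} machinery and the standard thermodynamic consequences. However, the step you single out as the main obstacle is a genuine gap, and the mechanism you propose cannot close it.

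For the rectangle $R=\ang{\wuu_\tau(y)}{\wss_\tau(y)}$ one checks that $z=f_{v(z,y)}\Proj_R(z)$. So what must be H\"older is the function $z\mapsto v(z,y)$. You need a bound on the \emph{difference} $\abs{v(z_1,y)-v(z_2,y)}$ in terms of $d(z_1,z_2)$, not a bound on $\abs{v(x,y)}$ near the diagonal. With $v_i=v(z_i,y)$, a time-zero triangle inequality through the base point gives
\[
C^{-1}\abs{v_1-v_2}^{1/\alpha}\le d(f_{v_1}y,\ang{z_1}{y})+d(\ang{z_1}{y},\ang{z_2}{y})+d(\ang{z_2}{y},f_{v_2}y).
\]
The middle term is controlled by Corollary \ref{MainTheorem}. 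The outer terms, however, are strong unstable displacements that are small only in terms of $d(z_i,y)$, not $d(z_1,z_2)$. The forward bracket only separates $\wss$ from $\wu$. It gives no H\"older control over how a point of $\wu_\epsilon(y)$ splits into a $\wuu$-part and a flow part. Moreover, \eqref{eq.HS} converts distance into time only for two points on a common orbit. Even your single-value claim is circular as stated: Corollary \ref{MainTheorem} bounds $d(\ang{x}{y},f_vy)$ only up to the term $d(y,f_vy)\le C\abs{v}^\alpha$.

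The missing ingredient is the H\"older continuity of the reverse bracket, Corollary \ref{ComplementaryResult}, which you invoke only for the coding step. For $w\in\wu_\epsilon(y)$ near $y$, write $w=f_tu$ with $u\in\wuu_\tau(y)$. Then $u=\ang{y}{w}^\iota$ and $f_ty=\ang{w}{y}^\iota$, so both depend H\"older-continuously on $w$. Now \eqref{eq.HS}, applied to the two orbit points $f_{t_1}y$ and $f_{t_2}y$, gives H\"older control of $t$; this is Lemma \ref{lem.halfhomeo} and Corollary \ref{cor.halfhomeo}. Combined with the chart $w\mapsto(\ang{w}{y},\ang{y}{w})$, this makes $\Phi_y$ bi-H\"older (Lemma \ref{lem.homeo}). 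That yields H\"older projections, and by Proposition \ref{prop.holderreturn}, H\"older return times.

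Alternatively, use the $T/2$ argument in the proof of Proposition \ref{lem.1stvar}: flow forward a logarithmic time, contract along $\wuu$ backward from $T$ and along $\wss$ forward from $0$. This gives exactly the difference bound \eqref{eq:estonv2}. After that, $\Proj_R(z)=f_{-v(z,y)}z$ is H\"older by \eqref{eq.HF} and the upper bound in \eqref{eq.HS}.

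Two smaller corrections. First, your concrete section $\{\ang{a}{b}:a\in\wss_\delta(x),\,b\in\wuu_\delta(x)\}$ lies in $\wss(x)\cap\wu(x)$, so with the paper's bracket convention it collapses to $\{x\}$. You want $\ang{\wuu_\tau(x)}{\wss_\tau(x)}$. Second, Pollicott's coding and the results of \cite{CLT19} are stated for metric Anosov flows in Pollicott's sense, which requires Lipschitz contraction on plaques. Before citing them ``verbatim,'' you should pass, as the paper does, to the H\"older-equivalent metric $d'$ of Lemma \ref{lem.pollicott}. This change of metric does not affect H\"olderness of the roof function or of $\hat\pi$.
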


\subsection{Results for geodesic flows}
We apply our results to geodesic flows in negative curvature. The following is an immediate corollary of Theorem \ref{prop.realstatement} applied to geodesic flows in negative curvature (on the universal cover).
\begin{corollary}\label{GeodesicFlow}
	Suppose that $\FFF : Y\times \R \to Y$ is either
	\begin{enumerate}
		\item the geodesic flow $\GGG : GX \times \R	\to GX$ 
			of any proper $\CAT(-1)$ space $X$ with respect to $d = d_{GX}$, or
		\item the geodesic flow $\GGG : T^1X \times \R \to T^1X$ 
			of any pinched negatively curved Hadamard manifold $X$ with respect to $d = d_S$.
	\end{enumerate}
	Then the Bowen bracket of $\FFF$ is \ul{} continuous on $(Y\times Y)_\delta$ for some $\delta > 0$. The u-holonomy maps on $\wss_\delta$, s-holonomy maps on $\wuu_\delta$, uu-holonomy maps
	on $\ws_\delta$, and ss-holonomy maps on $\wu_\delta$ are H\"older with uniform constants and exponent.
\end{corollary}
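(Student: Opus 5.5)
Corollary \ref{GeodesicFlow} follows by checking that, in both settings, the geodesic flow satisfies the hypotheses of Theorem \ref{prop.realstatement}: uniform canonical coordinates together with \eqref{eq.HW}, \eqref{eq.HF} and \eqref{eq.HS} with constants uniform over the non-compact space $Y$. Once this is done, Theorem \ref{prop.realstatement} gives the \ul{} continuity of the Bowen bracket and the H\"older u- and ss-holonomies. Applying the theorem to the time-reversed flow, as in Corollary \ref{cor.realstatementi}, gives the s- and uu-holonomies. This requires that the reversed flow also satisfy the hypotheses, which is clear by the symmetry $c(t)\mapsto c(-t)$.

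\textbf{Case (1).} Let $X$ be a proper $\CAT(-1)$ space with the metric $d_{GX}$.

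For uniform canonical coordinates I will use the boundary at infinity. Given $c,c'$ close, take the geodesic from $c(-\infty)$ to $c'(+\infty)$ (or the reverse, per the bracket convention). Parametrize it using Busemann functions so that it lies on the strong stable horosphere of $c$. Uniqueness of $v$ and of the intersection comes from uniqueness of geodesics between boundary points. The scales $(\epsilon,\delta)$ are uniform because the constructions depend only on $\CAT(-1)$ comparison, which is the same everywhere.

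The three estimates:
\begin{itemize}
\item \eqref{eq.HS}: the identity $d_{GX}(c,g_tc)=\int |t|e^{-2|s|}ds=|t|$ gives it immediately, with $\alpha=1$.
\item \eqref{eq.HF}: from $d(c(s+t),c'(s+t))$ and a change of variables, $d_{GX}(g_tc,g_tc')\le e^{2|t|}d_{GX}(c,c')$.
\item \eqref{eq.HW}: for $c'\in\wss_\eta(c)$, comparison with $\mathbb{H}^2$ gives $d(c(s),c'(s))\le C e^{-s}d(c(0),c'(0))$ for $s\ge0$. Integrating against $e^{-2|s|}$ and splitting at $s=0$ (using convexity of the distance function on the negative half), I get $d_{GX}(g_tc,g_tc')\le Ce^{-t}d(c(0),c'(0))$. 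The pointwise distance $d(c(0),c'(0))$ is bounded by $C\,d_{GX}(c,c')^{1/2}$, by the standard comparison between $d_{GX}$ and distances of base points (Lipschitz-type in one direction, H\"older in the other). This may force $\alpha=1/2$, which the theorem allows.
\end{itemize}

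\textbf{Case (2).} Let $X$ be a pinched negatively curved Hadamard manifold, $-b^2\le K\le -a^2$, with the Sasaki metric $d_S$.

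A Hadamard manifold with $K\le -a^2$ is $\CAT(-a^2)$, hence $\CAT(-1)$ after rescaling the metric. So the bracket construction and uniform canonical coordinates transfer from case (1).

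The estimates are checked directly:
\begin{itemize}
\item \eqref{eq.HS} holds since the flow has unit speed in the Sasaki metric.
\item \eqref{eq.HF} follows from Rauch comparison using $K\ge -b^2$: Jacobi fields grow at most like $e^{b|t|}$.
\item \eqref{eq.HW}: on strong stable horospheres, Jacobi fields contract at rate $e^{-at}$ by comparison with $K\le -a^2$, giving the Lipschitz version with $\lambda=a$. One needs care that the Sasaki distance and the intrinsic horospherical distance are comparable at small scale. That holds because horospheres have second fundamental form bounded by $b$, which uses only the curvature bounds and not their derivatives.
\end{itemize}
Alternatively, I can use $d_{GX}\sim d_S$ (the uniform local bi-H\"older equivalence asserted in the text) to transfer every conclusion from case (1), since \ul{} continuity and H\"older holonomies are invariant under uniformly H\"older-equivalent metrics. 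I would take this route, with Rauch as backup.

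The main obstacle is uniformity in the non-compact setting. Each constant, especially the scale $\delta$ in the canonical coordinates and the comparison between $d_{GX}$ and pointwise distances in \eqref{eq.HW}, must depend only on the $\CAT(-1)$ inequality or on the curvature pinching, and never on compactness. I would handle this by deriving every estimate from comparison triangles in $\mathbb{H}^2$, whose constants are universal.
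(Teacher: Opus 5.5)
Your overall route is the paper's. You verify the hypotheses of Theorem \ref{prop.realstatement} for $(GX,d_{GX})$: the boundary-at-infinity construction of the bracket, the identity $d_{GX}(c,g_tc)=|t|$ for \eqref{eq.HS}, and the change of variables for \eqref{eq.HF}. You get the $s$- and $uu$-holonomies from the reversed flow, and you transfer case (2) through the \ul{} equivalence $d_{GX}\sim d_S$ of Proposition \ref{oneholderclass}. The paper simply asserts \eqref{eq.HW} as well known. The justification you give for it, however, has a step that fails for general $\CAT(-1)$ spaces, which the statement explicitly covers.

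The intermediate bound $d_{GX}(g_tc,g_tc')\le Ce^{-t}d(c(0),c'(0))$ is false. In a tree, take $c,c'$ that coincide on $[-T,\infty)$ and branch at $c(-T)$. Then $d_{GX}(c,c')=\tfrac12 e^{-2T}$, so $c'\in\wss_\eta(c)$ once $T$ is large. Also $d(c(0),c'(0))=0$, yet $d_{GX}(g_tc,g_tc')=\tfrac12 e^{-2T-2t}>0$. In a branching space the backward behaviour of strongly stable geodesics is not controlled by base points at all. Convexity of $\phi(s)=d(c(s),c'(s))$ gives only lower bounds on the negative half, not the upper bound you need.

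The fix is to avoid base points for $u\le 0$. Since $|u-t|=t+|u|$ when $u\le 0\le t$,
\[
\int_{-\infty}^{0} d(c(u),c'(u))\,e^{-2|u-t|}\,du \;\le\; e^{-2t}\, d_{GX}(c,c').
\]
For $u\ge 0$, your horocycle comparison $\phi(u)\le Ce^{-u}\phi(0)$ integrates against $e^{-2|u-t|}$ to at most $C'e^{-t}\phi(0)$. Convexity then enters the right way round: $\phi(s)+\phi(-s)\ge 2\phi(0)$ gives $d(c(0),c'(0))\le d_{GX}(c,c')$. Together these give \eqref{eq.HW} with $\alpha=\lambda=1$, with no need for the $d_{GX}^{1/2}$ bound or the drop to $\alpha=1/2$.

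A smaller point in your Rauch backup for case (2): unit speed gives only the upper bound in \eqref{eq.HS}. The lower bound comes from $d_S(v,g_tv)\ge d_X(\pi v,\pi g_tv)=|t|$.
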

\begin{remark} \label{rem:limitiations}
We emphasize that the above result is formulated so that it applies for a geodesic flow `upstairs' in the universal cover. If we consider the geodesic flow for $X_0 = X / \Gamma$ where $\Gamma$ is a non-elementary torsion-free Kleinian group, the space $X_0$ is locally $\CAT(-1)$ rather than globally $\CAT(-1)$ so the above result does not apply directly to $GX_0$. When $X_0$ is compact, the results descend verbatim. When $X_0$ is non-compact, one has to take care with how all the objects in this result are defined.  In the non-compact case, for sufficiently small scales depending on the local injectivity radius, the above results descend to $X_0$ as long as we consider the projections of all the objects defined upstairs. However, these results do not descend to $X_0$ at a uniform scale, and the Bowen bracket for the geodesic flow for $X_0$ is not well-defined even at small scales in the presence of a cusp.
\end{remark} 

While it is natural to expect H\"older continuity of the Bowen bracket, issues of H\"older regularity of the stable and unstable are subtle even in the manifold case. Of particular interest to us for regularity questions are the family of smooth surfaces constructed by Ballmann, Brin and Burns in \cite{BBB}. We call these the $B^3$-examples to reflect the names of the authors. The $B^3$-examples are smooth surfaces with pinched negative curvature whose strong stable foliations are not H\"older in the distributional sense.   This contrasts with the theorem that manifolds with pinched negative curvature for which the first derivatives of curvature are uniformly bounded have H\"older continuous foliations in the distributional sense, see \cite[Theorem 7.3]{PPS}.  Even though the $B^3$ examples have non-H\"older distributions, we use Corollary \ref{GeodesicFlow} to prove the following result\footnote{We emphasize that while this result can likely also be proved by more direct geometric arguments, it follows readily from our framework, and we have not found it stated in the literature. For instance, the maps $\theta_x^{-1}\circ\theta_{x'}$ in \cite[\S2.2]{PP16},  which are holonomy maps of the weak stable foliation, are stated to be H\"older under the additional assumption that the derivatives of the curvature are bounded.}. Recall that $\wguu$ is the smooth dynamical strong stable foliation which can be defined using horospheres.
\begin{thmx} \label{Holonomies}
	Let $(M,\mathfrak{g})$ be a complete connected pinched negatively curved manifold. Then $\wguu$, $\wgu$, $\wgss$, and $\wgs$ have H\"older holonomies.
\end{thmx}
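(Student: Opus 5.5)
The strategy is to pass to the universal cover and apply Corollary \ref{GeodesicFlow} there. The resulting H\"older holonomies between smooth transversals then descend to $M$.

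Holonomy is a local notion: given two nearby smooth transversals $\tau_1,\tau_2$ to, say, $\wguu$ in $T^1M$, lift them to $T^1\tilde M$ inside a small ball. The holonomy along $\wguu$ between the lifts projects to the holonomy downstairs, and the covering map $T^1\tilde M\to T^1M$ is a local isometry for the Sasaki metric. It therefore suffices to prove uniform H\"older holonomies for the foliations on $X=\tilde M$, which is a pinched negatively curved Hadamard manifold.

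On $T^1X$ with $d=d_S$, I first check that the metric strong stable/unstable sets and the dynamical (horospherical) leaves coincide locally. In the universal cover of a pinched negatively curved manifold there are no cusps, so two vectors on the same horosphere with the same endpoint converge exponentially under the flow. Conversely, $d_S(g_tv,g_tw)\to0$ forces a common endpoint and a common horosphere. Hence $\wss_\epsilon(v)$ is a local leaf of $\wgss$, and similarly for the other foliations. Corollary \ref{GeodesicFlow} then gives uniformly H\"older holonomies for the special transversals: $u$-holonomy between $\wss$-plaques, $ss$-holonomy between $\wu$-plaques, and the reversed versions.

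The main step, and the expected obstacle, is passing from these special transversals to arbitrary smooth transversals (with the basic controls of \cite{PSW}: uniformly bounded size and curvature, and uniform transversality angle). Take $\wgu$ with transversals $\tau_1,\tau_2$, so the holonomy is $\tau_1\ni p\mapsto$ the point of $\tau_2\cap\wgu_{\rm loc}(p)$. I would factor it through a reference $\wss$-plaque $S=\wss_\delta(x)$ with $x\in\tau_1$. The map $\pi_1:\tau_1\to S$, $p\mapsto\ang{x}{p}$, is the holonomy along $\wgu$ and is H\"older by Corollary \ref{MainTheorem}-type regularity of the bracket (Theorem \ref{prop.realstatement}). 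The desired map is then $\pi_2^{-1}\circ\pi_1$, where $\pi_2:\tau_2\to S$ is defined analogously. It remains to show that $\pi_2^{-1}$ is H\"older, i.e.\ that $\pi_2$ is bi-H\"older.

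For the lower bound on $d(\pi_2 p,\pi_2 q)$, I would use the local product structure: by Theorem \ref{prop.realstatement} applied to the bracket and its reverse (Corollary \ref{cor.realstatementi}), the map $(s,w)\mapsto$ (point in $\wu_{\rm loc}(s)$ with coordinate $w$) gives a bi-H\"older chart of a neighborhood in which $\wgu$-plaques are coordinate slices. Since $\tau_2$ is smooth and uniformly transverse to the smooth leaves of $\wgu$, two points of $\tau_2$ at distance $r$ cannot lie in nearby $\wgu$-plaques relative to $r$. To make this quantitative, I would measure the distance between plaques by the $\wss$-coordinate through the H\"older inverse bracket, and use transversality together with the uniform $C^2$ geometry of the horospherical leaves (bounded curvature of horospheres under pinching). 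This should yield $d(\pi_2p,\pi_2q)\ge c\,d(p,q)^{\beta}$.

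The same factorization, with roles swapped, handles $\wgs$ (via the reversed bracket). For the strong foliations $\wguu,\wgss$, the transversals have one more dimension. Here I would combine the weak-foliation holonomy with the flow-direction coordinate $v(x,y)$, which is H\"older by (HS) and (HF) and the H\"older continuity of the bracket. Uniformity of all constants comes from the uniform constants in Corollary \ref{GeodesicFlow}.
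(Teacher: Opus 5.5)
Your architecture matches the paper's: lift to $T^1\tilde M$, identify the metric local leaves with the horospherical ones, apply Corollary \ref{GeodesicFlow}, then pass from $\wss$/$\wu$-plaque transversals to arbitrary smooth ones. But that last step, which is the only real content beyond Corollary \ref{GeodesicFlow}, is not supplied.

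\textbf{The factorization does not reduce the difficulty.} In $\pi_2^{-1}\circ\pi_1$, the map $\pi_2^{-1}$ is itself the $\wgu$-holonomy from a $\wss$-plaque onto an arbitrary transversal. That is exactly the case to be proved.

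\textbf{The tools you suggest for the lower bound $d(\pi_2p,\pi_2q)\ge c\,d(p,q)^\beta$ do not work as stated.}
\begin{itemize}
\item Pinching bounds the shape operators of horospheres. This controls only the tangent planes of the horospherical leaves in $T^1X$, not their second derivatives.
\item For the $B^3$-examples the leaves of $\wgss$ are in fact \emph{not} uniformly $C^2$ (Corollary \ref{cor:B3}, from \cite[Theorem 4]{HW} and \cite{BBB}). This is exactly the situation the theorem is meant to cover.
\item Transversality of $\tau_2$ is a statement in smooth coordinates. It does not survive passage through a merely bi-H\"older product chart, so it gives no control of the $\wu$-coordinates of points of $\tau_2$ by their $\wss$-coordinates.
\end{itemize}

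\textbf{The missing idea is the cone comparison of Lemma \ref{lem.transverse}.} It uses only continuity of the distribution. Work in smooth coordinates where $T\wgu$ has slope at most $\kappa<1$ and $\tau_2$ has coslope at most $\kappa$. For $p,q\in\tau_2$, the secant from $p$ to $\ang{q}{p}\in\wu(p)\cap\wss(q)$ is almost horizontal, while the secant from $p$ to $q$ is almost vertical. Hence $d(p,\ang{q}{p})\le\kappa'\,d(q,\ang{q}{p})$. Since $\ang{q}{\pi_2p}=\ang{q}{p}$ and $\ang{q}{\pi_2q}=q$, Proposition \ref{lem.2ndvar} gives
\[
d(p,q)\le(1+\kappa')\,d(\ang{q}{\pi_2p},\ang{q}{\pi_2q})\le(1+\kappa')K_1\,d(\pi_2p,\pi_2q)^{\lambda\alpha/k},
\]
which is your lower bound.

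The paper avoids the factorization altogether. Take $p,q$ to be the $\wgu$-projections onto $\tau_2$ of arbitrary nearby points $x,y$. The same computation gives $d(p,q)\le(1+\kappa')\,d(\ang{q}{x},\ang{q}{y})$. So projection along $\wgu$ onto any almost-vertical transversal is H\"older on an open set, and the holonomy $\tau_1\to\tau_2$ is just its restriction to $\tau_1$.

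The strong foliations also need no separate treatment via $v(x,y)$. The same lemma with the pair $(\wgss,\wgu)$ and Proposition \ref{lem.1stvar} handles $\wgss$, and the reversed bracket handles $\wguu$ and $\wgs$.
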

This adds interest to our result since it is a standard result that H\"older continuity of distributions implies H\"older continuity of holonomy maps under a `mild' additional assumption, which is that the distributions are uniformly $C^2$. The reason that there is no contradiction is that this `mild' additional assumption must fail for the $B^3$-examples. We obtain the following corollary. 

\begin{corollary} \label{cor:B3}
	The foliation $\wgss$ for a $B^3$-example is not uniformly $C^2$.
\end{corollary}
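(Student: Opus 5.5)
The corollary follows by contradiction from Theorem \ref{Holonomies} and the result of \cite[Theorem 4]{HW}.

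Let $(M,\mathfrak{g})$ be a $B^3$-example. It is a complete pinched negatively curved surface, so Theorem \ref{Holonomies} applies. Hence $\wgss$ has H\"older holonomies in the sense of Pugh, Shub and Wilkinson, on smooth transversals in $T^1M$ with uniform constants. Passing to the universal cover $\tilde M$ if needed, the same holds for the lifted foliation on $T^1\tilde M$, which is a foliation of a pinched negative curvature Hadamard setting. As noted after the definition of H\"older holonomies, the proof of \cite[Theorem 4]{HW} applies in this setting.

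Now suppose, for contradiction, that the leaves of $\wgss$ were uniformly $C^2$. Then \cite[Theorem 4]{HW} would give that the tangent distribution $x \mapsto T_x\wgss$ is H\"older as a section of the Grassmann bundle. This contradicts the defining property of the $B^3$-examples from \cite{BBB}: their strong stable foliation is not H\"older in the distributional sense. Therefore $\wgss$ is not uniformly $C^2$.

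The main point to check is compatibility of hypotheses. First, the notion of H\"older holonomies produced by Theorem \ref{Holonomies} must be the one \cite[Theorem 4]{HW} needs, namely holonomies between smooth transversals with uniformly controlled geometry and uniform H\"older constants. Second, the \cite{HW} argument is local, using only uniform local control of leaves and transversals, so it transfers to the non-compact $T^1M$ once all constants are uniform. Third, "not H\"older" in \cite{BBB} must be meant in the same uniform distributional sense with respect to the Sasaki metric. I expect the first of these to be the only delicate step, and I would address it by citing the formulation of Theorem \ref{Holonomies} directly.
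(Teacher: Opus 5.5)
Your proposal is correct and is essentially the paper's own argument: Theorem \ref{Holonomies} gives H\"older holonomies for $\wgss$, \cite[Theorem 4]{HW} would then turn uniformly $C^2$ leaves into a H\"older tangent distribution, and this contradicts \cite{BBB}. The compatibility points you flag (non-compactness and the notion of transversals) are ones the paper simply takes for granted, via its remark that the proof of \cite{HW} applies to foliations of pinched negatively curved Hadamard manifolds.
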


\begin{remark}
H\"older continuity of the foliations in the distributional sense is a
main ingredient in obtaining absolute continuity, which is used in the
Hopf argument for ergodicity of the Liouville measure.
For pinched negatively curved manifolds without bounds on the first derivative of the curvatures, the ergodicity of Liouville measure thus remains open. This explains the relevance of the $B^3$-examples.  Theorem \ref{Holonomies} does not seem to be helpful for ergodicity but it does clarify what one can still get without the assumption of bounded derivatives of curvature.
\end{remark}

\section{H\"older continuity of local product structure}\label{s.main}
We prove Theorem \ref{prop.realstatement}. Suppose that $\FFF : Y \times \R \to Y$ is a continuous flow on a complete separable metric space $(Y,d)$.
We do not assume in this section that $Y$ is compact. We assume that $\FFF$ satisfies the conditions of Proposition
\ref{prop.realstatement}, and we fix the uniform constants $\alpha \in (0,1]$, $\eta,\lambda,k > 0$,
$C \geq 1$ from \eqref{eq.HW}, \eqref{eq.HS}, and \eqref{eq.HF}. Proposition
\ref{prop.realstatement} is proved in two parts, which are Propositions \ref{lem.2ndvar} and \ref{lem.1stvar}.

Given $0 < \delta \leq \eta$, we set
\begin{equation}\label{def:L}
	L(\delta) := \max\Big\{\tfrac1k \log \tfrac{C}{\delta},\, 0\Big\} \geq 0,
\end{equation}
and for $x,y \in Y$ with $x \neq y$ we define
\begin{equation}\label{def:T}
	T(x,y;\delta) := \max\Big\{-\tfrac{\alpha}{k}\log d(x,y) - L(\delta),\, 0\Big\}.
\end{equation}
A short calculation from the definitions gives that for any $x,y \in Y$ with $x \neq y$, and $\lambda>0$, we have
\begin{equation}\label{relateTandL}
e^{-\lambda T(x, y;\delta) } \leq e^{\lambda L(\delta)} d(x, y) ^{\lambda \alpha  / k}
\end{equation}
with equality unless $T(x, y;\delta)=0$. The following elementary lemma, which uses only \eqref{eq.HF},  says that if $x$ and $y$ are $\delta$-close, then
the points $f_t x$ and $f_t y$ stay $\delta$-close for all times $\abs{t} \leq T(x,y;\delta)$, where as $y \to x$ the quantity
$T(x,y;\delta)$ grows to infinity logarithmically. 

\begin{lemma}\label{lem.logclose}
	Let $0 < \delta \leq \eta$. If $x,y \in Y$ satisfy $x \neq y$ and
	$d(x,y) \leq \delta$, and $t \in \R$ satisfies $\abs{t} \leq T(x,y;\delta)$, then
	\[
		d(f_tx,f_ty) \leq \delta . 
	\]
\end{lemma}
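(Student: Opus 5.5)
The plan is to apply \eqref{eq.HF} directly and check that the choice of $T(x,y;\delta)$ makes the resulting bound at most $\delta$. Since $d(x,y)\le\delta\le\eta$, \eqref{eq.HF} applies for every $t\in\R$ and gives
\[
d(f_tx,f_ty)\le Ce^{k\abs{t}}d(x,y)^\alpha .
\]

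I would split into two cases according to the value of $T=T(x,y;\delta)$.

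If $T=0$, then $\abs{t}\le T$ forces $t=0$. The claim is then just the hypothesis $d(x,y)\le\delta$.

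If $T>0$, then $T=-\tfrac{\alpha}{k}\log d(x,y)-L(\delta)$. This gives
\[
e^{k\abs{t}}\le e^{kT}=d(x,y)^{-\alpha}e^{-kL(\delta)},
\]
and hence
\[
d(f_tx,f_ty)\le Ce^{-kL(\delta)} .
\]
It remains to show $Ce^{-kL(\delta)}\le\delta$. From the definition \eqref{def:L},
\[
e^{-kL(\delta)}=\min\{\delta/C,\,1\}.
\]
If $\delta\le C$, then $L(\delta)=\tfrac1k\log\tfrac{C}{\delta}$ and $Ce^{-kL(\delta)}=\delta$ exactly. If $\delta>C$, then $L(\delta)=0$ and $Ce^{-kL(\delta)}=C<\delta$. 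In either case the bound is at most $\delta$.

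There is no real obstacle here. The only point needing care is the truncations by $\max\{\cdot,0\}$ in \eqref{def:L} and \eqref{def:T}, which are handled by the case split above. This is also where the convention $x\neq y$ is used, so that $\log d(x,y)$ is defined.
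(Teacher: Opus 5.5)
Your proof is correct and follows essentially the same route as the paper: both split on whether $T(x,y;\delta)=0$, and in the positive case both exponentiate the definition of $T$ and apply \eqref{eq.HF}. Your computation $e^{-kL(\delta)}=\min\{\delta/C,1\}$, with its sub-case split, is just an expanded version of the inequality $e^{-kL(\delta)}\le \delta/C$ that the paper uses in one line.
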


\begin{proof}
	If $T(x,y;\delta) = 0$ then $t = 0$ and $d(f_0x,f_0y) = d(x,y) \leq \delta$, as
	required. Suppose then that $T(x,y;\delta) > 0$, so that by \eqref{def:T},
	\[
		\abs{t} \leq -\tfrac{\alpha}{k}\log d(x,y) - L(\delta).
	\]
	Multiplying by $k$ and exponentiating, we obtain from \eqref{def:L} that
	\[
		e^{k\abs{t}} \leq d(x,y)^{-\alpha} e^{-kL(\delta)}
		\leq \frac{\delta}{C}\, d(x,y)^{-\alpha}.
	\]
	 Since $d(x,y) \leq \delta \leq \eta$, the hypothesis
	\eqref{eq.HF} applies and yields
	\[
		d(f_tx,f_ty) \leq C e^{k\abs{t}} d(x,y)^{\alpha}
		\leq C \cdot \frac{\delta}{C}\, d(x,y)^{-\alpha} \cdot d(x,y)^{\alpha} = \delta . \qedhere
	\]
\end{proof}

\begin{remark}\label{rem:truncations}
	 If we additionally assume that the constant $\delta$ satisfies $\delta \leq 1$, we can simplify the bookkeeping above. We have
	$L(\delta) = \frac1k\log\frac{C}{\delta}$. If
	$d(x,y) < (\delta/C)^{1/\alpha}$, then $d(x,y) \leq \delta$ and $T(x,y;\delta) = -\tfrac{\alpha}{k}\log d(x,y) - L(\delta)> 0$. Lemma~\ref{lem.logclose} can then be stated with the hypothesis
	$0 < d(x,y) < (\delta/C)^{1/\alpha}$, which is exactly the hypothesis needed for the  conclusion to be non-trivial.
\end{remark}

The following lemma shows that for a fixed $x$, the map $ y \to \ang{x}{y}$ is uniformly locally H\"older. In the case that $Y$ is compact, it follows that $y \to \ang{x}{y}$ is H\"older. Recall  that varying the second argument in $\ang{x}{y}$ corresponds to staying on the stable set of $x$ and moving across the weak unstable sets.
\begin{proposition}\label{lem.2ndvar}
	Let $\delta>0$ be sufficiently small. There exist $K_1= K_1(\delta) > 0$ such that if $x,y_1,y_2 \in Y$ satisfy $d(x,y_1) \leq \delta$ and $d(x,y_2) \leq \delta$, then
	\[
	d(\ang{x}{y_1},\ang{x}{y_2}) \leq K_1d(y_1,y_2)^{\frac{\lambda\alpha}{k}}
	.\]
\end{proposition}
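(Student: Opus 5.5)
Write $z_i = \ang{x}{y_i}$. Both $z_1,z_2$ lie in $\wss_\epsilon(x)$. Also $z_i \in \wuu_\epsilon(f_{v_i}y_i)$ with $\abs{v_i}\le\epsilon$. The plan is to bound $d(z_1,z_2)$ in two ways and combine them. Flowing forward, $z_1$ and $z_2$ contract towards each other by \eqref{eq.HW}. Flowing backward, each $z_i$ shadows the orbit of $y_i$, and the two $y$-orbits stay close for a time $T(y_1,y_2;\delta)$ by Lemma \ref{lem.logclose}. If $z_1$ and $z_2$ are far apart relative to $e^{-\lambda T}$, this closeness should contradict uniqueness in the canonical coordinates.

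\textbf{Step 1 (set-up).} Fix $\epsilon$ small (at most $\eta/4$, say) and choose $\delta$ so that canonical coordinates hold at scale $(\epsilon,\delta)$ and also at a smaller scale $(\epsilon',\delta')$. Put $T = T(y_1,y_2;\delta)$. We may assume $d(y_1,y_2)$ is small enough that $T>0$; otherwise the bound is trivial because $d(z_1,z_2)\le 2\epsilon$, after enlarging $K_1$. By Lemma \ref{lem.logclose}, $d(f_{-t}y_1,f_{-t}y_2)\le \delta$ for $0\le t\le T$.

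\textbf{Step 2 (construct a candidate bracket at time $-T$).} Set $p = f_{-T}z_1$ and $q=f_{-T}z_2$. By \eqref{eq.HW} applied to $\wuu$, $p$ is within $Ce^{-\lambda T}\epsilon^\alpha$ of $f_{-T+v_1}y_1$, and similarly for $q$. Using \eqref{eq.HF} and \eqref{eq.HS}, the flow shifts $v_i$ cost only bounded amounts. Combining these with Step 1, $d(p,q)\lesssim \delta+\epsilon^\alpha$, which is small.

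Consider the local bracket $w = \ang{p}{q}$ at scale $(\epsilon',\delta')$, so that $w\in\wss_{\epsilon'}(p)\cap\wuu_{\epsilon'}(f_{v}q)$. Flow $w$ forward by $T$. Then $f_Tw$ lies in the strong stable set of $z_1$, hence of $x$. It is also strong unstable to $f_{v}z_2$ and so weakly unstable to $y_2$.

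The key claim is that $f_T w$ remains within $\epsilon$ of $x$ along the stable direction and within $\epsilon$ of $f_v z_2$ backward in time. Then uniqueness in Definition \ref{def.ccu} forces $f_Tw = z_2$ and $v=0$.

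\textbf{Step 3 (conclude).} Once $z_2 = f_T w$ with $w\in \wss_{\epsilon'}(p) = \wss_{\epsilon'}(f_{-T}z_1)$, apply \eqref{eq.HW} forward for time $T$. This gives $d(z_1,z_2)\le Ce^{-\lambda T}(\epsilon')^\alpha$. By \eqref{relateTandL} this is at most $Ce^{\lambda L(\delta)}d(y_1,y_2)^{\lambda\alpha/k}$, which yields $K_1$.

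\textbf{Main obstacle.} The hard part is the key claim in Step 2: verifying that $f_Tw$ lies in the $\epsilon$-local sets, so that uniqueness applies. The stable part follows from \eqref{eq.HW} and the triangle inequality, since distances along $\wss$ only shrink up to the constant $C$. The unstable part requires $f_{t}w$ to stay $\epsilon$-close to $f_{t+v}z_2$ for all $t\le T$. For $t\le 0$ this follows from $w\in\wuu_{\epsilon'}$. For $0\le t\le T$ I would use the chain $f_{t-T}w\approx f_{t-T}q\approx f_{t-T+v_2}y_2$, controlling each link by \eqref{eq.HW} and the flow-direction shift by \eqref{eq.HS}. This is where the scales $\epsilon'\ll\epsilon$ and $\delta$ must be tuned carefully. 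If it fails, I would instead derive a contradiction directly: assume $d(z_1,z_2)>K e^{-\lambda T}$ and show that two distinct points satisfy the bracket characterization at time $-T$.
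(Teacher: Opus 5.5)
Your Step 2 does not close as set up, and the key claim you flag is exactly where it fails. There are three concrete problems.

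\emph{The bracket may be undefined.} The bracket $w=\ang{p}{q}$ at a scale $(\epsilon',\delta')$ with $\epsilon'\ll\epsilon$ need not exist. Your bound on $d(p,q)$ contains the flow-shift terms $d(f_{-T+v_i}y_i,f_{-T}y_i)\le C\epsilon^\alpha$. So it is of order $2\epsilon+2C\epsilon^\alpha+\delta$, which is not below $\delta'$.

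\emph{Uniqueness at scale $\epsilon$ does not apply.} Every estimate for $f_Tw$ has the form ``$\epsilon$ plus an error'', for example $d(f_{s+T}w,f_sx)\le Ce^{-\lambda(s+T)}(\epsilon')^\alpha+\epsilon$. You would need a separate, larger uniqueness scale fixed before everything else. So the order of scales is the reverse of what you propose.

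\emph{The forward-time unstable link cannot come from \eqref{eq.HW}.} For $0\le t\le T$, $w$ lies on a strong unstable set of $f_vq$, and such points separate in forward time. The only available route is through $w\in\wss_{\epsilon'}(p)$ together with a bound on $d(f_tp,f_tq)=d(f_{t-T}z_1,f_{t-T}z_2)$. That bound must come from the shadowing $z_i\approx f_{v_i}y_i$ and Lemma~\ref{lem.logclose}.

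The missing idea is that this last bound already proves the proposition, so $w$ and the uniqueness argument are unnecessary. Since $z_1,z_2\in\wss_\epsilon(x)$, the point $q$ lies on the global strong stable set of $p$, and $d(f_sz_1,f_sz_2)\le 2\epsilon$ for $s\ge 0$. Take $T=T(y_1,y_2;2\delta)$. For every $s\in[-T,0]$, run the chain $f_sz_1, f_{s+v_1}y_1, f_sy_1, f_sy_2, f_{s+v_2}y_2, f_sz_2$ using \eqref{eq.HS} and Lemma~\ref{lem.logclose}. This gives $d(f_sz_1,f_sz_2)\le 2\epsilon+2C\epsilon^\alpha+2\delta\le\eta$ once $\epsilon,\delta$ are small. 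Hence $f_{-T}z_2\in\wss_\eta(f_{-T}z_1)$. Applying \eqref{eq.HW} over the time interval $[-T,0]$ gives $d(z_1,z_2)\le Ce^{-\lambda T}\eta^\alpha$, and \eqref{relateTandL} converts this into $K_1d(y_1,y_2)^{\lambda\alpha/k}$. This is the paper's proof.

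Your argument can be made to work if the scales are repaired (uniqueness scale $\gg\epsilon'\gg\delta'\gg\epsilon\gg\delta$). But uniqueness would then only tell you $w=q$. That is, it re-derives the local strong stable relation between $p$ and $q$, which you can read off directly.
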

\begin{proof}
	Since $\FFF$ has uniform canonical coordinates, then for any sufficiently small $\epsilon>0$, we can find $\delta_0>0$ so that for any $ \delta \in (0, \delta_0)$, the flow has uniform canonical coordinates at scale $(\epsilon, \delta)$. We assume without loss of generality that $\epsilon > 0$ and $\delta >0$ are chosen so that 
	\begin{equation} \label{eq:epsdeltaconvention}
	2\epsilon + 2C\epsilon^\alpha \leq \frac{1}{2}\eta \hspace{30pt} \delta \leq \frac{1}{4}\eta.
	\end{equation}

	We fix $x, y_1, y_2 \in Y$ with $d(x,y_i) \leq \delta$. Note that $d(y_1,y_2) \leq
	2\delta$.  Let $L=L(2\delta)$ and $T= T(y_1,y_2, 2\delta) \geq 0$ be as in \eqref{def:L} and \eqref{def:T} respectively.
	By Lemma \ref{lem.logclose}, 
	\begin{equation} \label{eq.logclose2delta}
	d(f_ty_1,f_ty_2) \leq 2\delta \mbox{ for all } \abs{t} \leq T.
	\end{equation}

	Let $z_1 = \ang{x}{y_1}$ and $z_2 = \ang{x}{y_2}$. 
	For $i = 1, 2$, there exists $v_i = v(x,y_i)$ with $\abs{v_i} \leq \epsilon$
	such that $z_i \in \wuu_\epsilon(f_{v_i}y_i)$. For $T= T(y_1,y_2, 2\delta)$, using the fact that $z_i \in \wuu_\epsilon(f_{v_i}y_i)$ and the properties \eqref{eq.logclose2delta} and \eqref{eq.HS}, we calculate that
\[
		\begin{split}
			d(f_{-T}z_1,f_{-T}z_2) &\leq d(f_{-T}z_1,f_{-T+v_1}y_1)+d(f_{-T+v_1}y_1,f_{-T}y_1)+d(f_{-T}y_1,f_{-T}y_2)\\
											 & \quad +d(f_{-T}y_2,f_{-T+v_2}y_2) +d(f_{-T+v_2}y_2,f_{-T}z_2)\\
											 &\leq \epsilon + C\epsilon^\alpha + 2\delta + C\epsilon^\alpha + \epsilon
											 \leq \eta.
		\end{split}
		\]
	Since $z_1$ and $z_2$ are in the same strong stable set, it follows that $f_{-T}z_1 \in \wss_{\eta}(f_{-T}z_2)$. It follows from \eqref{eq.HW} that
	\[
	d(z_1,z_2) \leq Ce^{-\lambda T} d(f_{-T}z_1, f_{-T}z_2)^\alpha \leq Ce^{-\lambda T}\eta^\alpha.
	\]
Thus, using \eqref{relateTandL}, we have
	\[
		d(\ang{x}{y_1},\ang{x}{y_2}) = d(z_1,z_2) \leq Ce^{-\lambda T}\eta^\alpha
		\leq C\eta^\alpha e^{\lambda L}d(y_1,y_2)^{\frac{\lambda\alpha}{k}}.
	\] 
	The result follows with $K_1 = C\eta^\alpha e^{\lambda L(\delta)}$.
\end{proof}
The following lemma shows that for a fixed $y$, the map $ x \to \ang{x}{y}$ is uniformly locally H\"older. In the case that $Y$ is compact, it follows that $x \to \ang{x}{y}$ is H\"older. Recall  that varying the first argument in $\ang{x}{y}$ corresponds to staying on the weak unstable of $y$ and moving across the strong stable sets.
\begin{proposition}\label{lem.1stvar}
	Let $\delta>0$ be sufficiently small. There exist $K_2=K_2(\delta) > 0$ such that, if $x_1,x_2,y \in Y$ satisfy  $d(x_1,y) \leq \delta$ and $d(x_2,y) \leq \delta$, then
	\[
	d(\ang{x_1}{y},\ang{x_2}{y}) \leq K_2 d(x_1,x_2)^{\frac{\lambda\alpha^3}{2k}}
	.\]
\end{proposition}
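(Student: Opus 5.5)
The plan is to mirror the proof of Proposition \ref{lem.2ndvar}, but with the time direction reversed. The new difficulty is that the two brackets now lie on the same \emph{weak} unstable set rather than on the same strong stable set, so a flow-direction time shift has to be controlled first. Fix $\epsilon,\delta$ as in \eqref{eq:epsdeltaconvention} and set $z_i=\ang{x_i}{y}$. Then $z_i\in\wss_\epsilon(x_i)\cap\wuu_\epsilon(f_{v_i}y)$ with $\abs{v_i}\le\epsilon$. Put $s=v_2-v_1$ and $w=f_s z_1$, so that $z_2\in\wuu(w)$. By the triangle inequality through $f_{v_2}y$ and \eqref{eq.HS}, we have $d(z_2,w)\le \epsilon+\epsilon+C(2\epsilon)^\alpha$, which is at most $\eta$ after shrinking $\epsilon$; hence $z_2\in\wuu_\eta(w)$. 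The target estimate is
\[
d(z_1,z_2)\le d(z_1,f_sz_1)+d(w,z_2)\le C\abs{s}^\alpha + d(w,z_2),
\]
so it suffices to bound $\abs{s}$ and $d(w,z_2)$ by powers of $d(x_1,x_2)$.

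Let $T=T(x_1,x_2;2\delta)$, as in Lemma \ref{lem.logclose}. For $0\le t\le T$, the points $f_tz_i$ stay within $Ce^{-\lambda t}\epsilon^\alpha$ of $f_tx_i$ by \eqref{eq.HW}, and $f_tx_1,f_tx_2$ stay within $2\delta$ of each other. Consequently $f_t z_2$ and $f_t w$ are within $\eta$ for all $0\le t\le T$: we have $d(f_tw,f_tz_1)\le C\abs{s}^\alpha$ by \eqref{eq.HS}, and the remaining terms are at most $2\delta+2C\epsilon^\alpha$. Thus $f_Tz_2\in\wuu_\eta(f_Tw)$, and applying the unstable half of \eqref{eq.HW} backwards from time $T$ gives
\[
d(f_tz_2,f_tw)\le Ce^{-\lambda(T-t)}\eta^\alpha\qquad(0\le t\le T).
\]
Taking $t=0$ bounds $d(w,z_2)$ by $Ce^{-\lambda T}\eta^\alpha$, which by \eqref{relateTandL} is a power $d(x_1,x_2)^{\lambda\alpha/k}$.

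The main obstacle is bounding $\abs{s}$. For this I would work at the intermediate time $t=T/2$, where both kinds of errors are small simultaneously. By \eqref{eq.HF}, $d(f_{T/2}x_1,f_{T/2}x_2)\le Ce^{kT/2}d(x_1,x_2)^\alpha$, which is of order $d(x_1,x_2)^{\alpha/2}$ with the definition of $T$. The strong stable errors $d(f_{T/2}z_i,f_{T/2}x_i)$ are of order $e^{-\lambda T/2}$, and the unstable error $d(f_{T/2}z_2,f_{T/2}w)$ is of order $e^{-\lambda T/2}$ by the display above. Write $a=f_{T/2}z_1$. The lower bound in \eqref{eq.HS} then gives
\[
\tfrac1C\abs{s}^{1/\alpha}\le d(a,f_sa)\le d(a,f_{T/2}z_2)+d(f_{T/2}z_2,f_{T/2}w)\lesssim e^{-\lambda T/2}+d(x_1,x_2)^{\alpha/2}\lesssim d(x_1,x_2)^{\frac{\lambda\alpha}{2k}},
\]
using \eqref{relateTandL} with $T/2$ in place of $T$ and $\lambda\le k$. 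Hence $\abs{s}\lesssim d(x_1,x_2)^{\lambda\alpha^2/(2k)}$, and therefore $C\abs{s}^\alpha\lesssim d(x_1,x_2)^{\lambda\alpha^3/(2k)}$. This is the worst exponent appearing, which matches the statement; combining it with the bound on $d(w,z_2)$ gives the result, with $K_2$ depending only on $C,\eta,\lambda,k,\alpha$ and on $\delta$ through $L(2\delta)$.

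Bookkeeping is still needed for small $d(x_1,x_2)$ versus the case $T=0$: the latter is trivial because every quantity is bounded. One must also check that all intermediate distances stay below $\eta$, so that \eqref{eq.HW} and \eqref{eq.HF} apply; this is where the smallness choices \eqref{eq:epsdeltaconvention} are used.
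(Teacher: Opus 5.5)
Your proposal is correct and follows the paper's proof essentially step for step. Your $w=f_sz_1$ is the paper's $w_1'=f_{-v}w_1$; you bound $d(w,z_2)\lesssim e^{-\lambda T}$ via \eqref{eq.HW} from time $T$, control the time shift at time $T/2$ using the lower bound in \eqref{eq.HS}, and get the same exponent $\lambda\alpha^3/(2k)$.

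One small repair: membership $z_2\in\wuu_\eta(w)$ needs control at all negative times, not just the time-$0$ triangle inequality you give. The paper obtains it by assuming without loss of generality that $s\le 0$ (swap $x_1,x_2$), so that $w$ and $z_2$ both lie in $\wuu_\epsilon(f_{v_2}y)$, which gives $z_2\in\wuu_{2\epsilon}(w)$.
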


\begin{proof} We choose $\epsilon>0$ and $\delta>0$ the same way as in the previous proof. In particular, the flow has uniform canonical coordinates at scale $(\epsilon, \delta)$ and \eqref{eq:epsdeltaconvention} holds.

	We fix $x_1, x_2, y \in Y$ with $d(x_i,y) \leq \delta$. Note that $d(x_1,x_2) \leq
	2\delta$.  Let $L=L(2\delta)$ and $T= T(x_1,x_2, 2\delta) \geq 0$ be as in \eqref{def:L} and \eqref{def:T} respectively.
	By Lemma \ref{lem.logclose}, 
	\begin{equation} \label{eq.xlogclose2delta}
	d(f_tx_1,f_tx_2) \leq 2\delta \mbox{ for all } \abs{t} \leq T.
	\end{equation}

	Let $w_1 = \ang{x_1}{y}$ and $w_2 = \ang{x_2}{y}$. 
	For $i = 1, 2$, there exists $v_i = v(x_i,y)$ with $\abs{v_i} \leq \epsilon$
	such that $w_i \in \wuu_\epsilon(f_{v_i}y)$. By possibly switching $w_1$ with $w_2$, we assume that $v = v(x_1, x_2) \coloneqq v_1-v_2 \geq 0$. 
	Let $w_1:'= f_{-v}w_1$. Since $w_1 \in \wuu_\epsilon(f_{v_1}y)$, it is easy to check that $w_1' \in \wuu_{2\epsilon} (w_2) \subset \wuu_{\eta} (w_2)$.  

For $T= T(x_1,x_2, 2\delta)$, using \eqref{eq.HS}, the fact that $w_i \in \wss_\epsilon(x_i)$ and the property \eqref{eq.xlogclose2delta}, we calculate that

	\begin{equation}\label{eq.1stvar+}
		\begin{split}
			d(f_{T}w'_1,f_T w_2) &\leq d(f_{-v}(f_{T}w_1),f_Tw_1) + d(f_Tw_1,f_T x_1) \\
													 &\quad + d(f_Tx_1,f_Tx_2)+d(f_Tx_2,f_Tw_2) \\
													 &\leq C(2\epsilon)^\alpha + \epsilon + 2\delta + \epsilon \leq \eta.
		\end{split}
	\end{equation}
	Since $w'_1$ and $w_2$ are in the same strong unstable set, we have
	$f_{T}w'_1 \in \wuu_{\eta}(f_Tw_2)$.   It follows from \eqref{eq.HW} that
	\[
	d(w'_1,w_2) \leq Ce^{-\lambda T} d(f_{T}w'_1, f_{T}w_2)^\alpha \leq Ce^{-\lambda T}\eta^\alpha, 
	\]
and thus using \eqref{relateTandL}, we have
	\[
			d(w'_1,w_2) \leq C\eta^\alpha e^{\lambda L}d(y_1,y_2)^{\frac{\lambda\alpha}{k}}.
	\] 
It follows that
	\begin{equation}\label{eq.1stvarfinal}
		\begin{split}
			d(\ang{x_1}{y}),\ang{x_2}{y}) = d(w_1,w_2) &\leq d(w_1,f_{-v}w_1) + d(w'_1,w_2) \\
								& \leq Cv^{\alpha}+C\eta^\alpha e^{\lambda L}d(x_1,x_2)^{\frac{\lambda\alpha}{k}}.
		\end{split}
	\end{equation}
	It remains to obtain a bound on $v= v(x_1, x_2)$.  If $v=0$, we are done, so we assume that $v>0$. Using \eqref{eq.HS}, we have
	\begin{align*}
		\frac{1}{C}v^{\frac{1}{\alpha}} &\leq d(f_{\frac{T}{2}-v}w_1,f_{\frac{T}{2}}w_1) \\
									 &\leq d(f_{\frac{T}{2}-v}w_1,f_{\frac{T}{2}}w_2) + d(f_{\frac{T}{2}}w_2,f_{\frac{T}{2}}x_2)+ d(f_{\frac{T}{2}}x_2, f_{\frac{T}{2}} x_1) + d(f_{\frac{T}{2}}x_1, f_{\frac{T}{2}}w_1).									
	\end{align*}
We address each term in the above estimate.  By \eqref{eq.1stvar+}, $f_Tw_1' \in  \wuu_{\eta} (f_Tw_2)$, so by \eqref{eq.HW} we have
\[
d(f_{\frac{T}{2}-v}w_1,f_{\frac{T}{2}}w_2) 	= d(f_{\frac{T}{2}}w'_1,f_{\frac{T}{2}}w_2) \leq C d(f_{{T}}w'_1,f_{{T}}w_2)e^{-\frac{\lambda T}{2}} \leq C \eta e^{-\frac{\lambda T}{2}}.
\]					 
For the second term, and the same argument for the fourth term, since $w_2 \in \wss_\epsilon (x_2)$ by \eqref{eq.HW} we have		
	\[
	d(f_{\frac{T}{2}}w_2,f_{\frac{T}{2}}x_2)  \leq C d(w_2,x_2)e^{-\frac{\lambda T}{2}} \leq C \eta e^{-\frac{\lambda T}{2}}.
	\]
	Using \eqref{eq.HF} on the third term and combining the above, we obtain
	\[
	\frac{1}{C}v^{\frac{1}{\alpha}} \leq 3 Ce^{-\frac{\lambda T}{2}} + Ce^{\frac{kT}{2}}d(x_1,x_2)^\alpha.
	\]

By \eqref{relateTandL}, we have
\[
	e^{-\frac{\lambda T}{2}} \leq e^{\frac{\lambda L}{2}}\, d(x_1,x_2)^{\frac{\lambda\alpha}{2k}}.
\]

To bound the second term, we assume  that $d(x_1,x_2) < e^{-\frac{k L(\delta)}{\alpha}}$. This ensures that
$T(x_1,x_2;\delta) > 0$ and that  $d(x_1,x_2) \leq 1$. The definition of $T(x_1,x_2;\delta)$ gives
\[
	e^{\frac{kT}{2}} \leq e^{-\frac{kL}{2}}\, d(x_1,x_2)^{-\frac{\alpha}{2}}.
\]
Using $d(x_1,x_2)^{\alpha}\leq d(x_1,x_2)^{\frac{\alpha}{2}}$, we obtain
\[
	\frac{1}{C}v^{\frac{1}{\alpha}}
	\leq 3Ce^{\frac{\lambda L}{2}}\, d(x_1,x_2)^{\frac{\lambda\alpha}{2k}}
	   + Ce^{-\frac{kL}{2}}\, d(x_1,x_2)^{\frac{\alpha}{2}}.
\]
Since $\lambda \leq k$ we have $\frac{\lambda\alpha}{2k}\leq\frac{\alpha}{2}$, and hence
$d(x_1,x_2)^{\frac{\alpha}{2}} \leq d(x_1,x_2)^{\frac{\lambda\alpha}{2k}}$. Therefore
\[
	\frac{1}{C}v^{\frac{1}{\alpha}} \leq C_1 d(x_1,x_2)^{\frac{\lambda\alpha}{2k}}
	\]
	where $C_1 = C_1(\delta) := 3Ce^{\frac{\lambda L(\delta)}{2}} + Ce^{-\frac{kL(\delta)}{2}}$. We obtain
\begin{equation} \label{eq:estonv}
	v \leq (C_1C_2)^\alpha\, d(x_1,x_2)^{\frac{\lambda\alpha^{2}}{2k}}.
\end{equation}
By increasing the constant if necessary to remove the assumption that $d(x_1,x_2) < e^{-\frac{k L(\delta)}{\alpha}}$, we obtain a constant $C_3$ such that
\begin{equation} \label{eq:estonv2}
	v \leq C_3\, d(x_1,x_2)^{\frac{\lambda\alpha^{2}}{2k}}.
\end{equation}

It follows from \eqref{eq.1stvarfinal} that
	\begin{equation}
		\begin{split}
			d(\ang{x_1}{y}),\ang{x_2}{y}) & \leq Cv^{\alpha}+C\eta^\alpha e^{\lambda L}d(x_1,x_2)^{\frac{\lambda\alpha}{k}}. \\
								& \leq CC_3^\alpha d(x_1,x_2)^{\frac{\lambda\alpha^{3}}{2k}}+C\eta^\alpha e^{\lambda L}d(x_1,x_2)^{\frac{\lambda\alpha}{k}},
		\end{split}
	\end{equation}
and the result follows. 
	\end{proof}
We note that we could have used $p T$ for a fixed $p \in (0,1)$ in place of $\frac{T}{2}$ in the part of the argument above which starts at \eqref{eq:estonv}. We used $p = \frac{1}{2}$ since it gives the best exponent in \eqref{eq:estonv2}.
\begin{remark}\label{rem.ouroptimal}
In the Lipschitz case when $\alpha=1$, the exponents are $\frac{\lambda}{2k}$ in the first variable and $\frac{\lambda}{k}$ in the second variable, where $\lambda \leq k$ are the constants from \eqref{eq.HW} and \eqref{eq.HF}. The best possible exponents provided by our argument thus correspond to the case that $\lambda=k=1$, and give exponents of $\frac{1}{2}$ in the first variable and $1$ in the second variable.  In particular, the best exponents possibly provided by our proof are $\frac{1}{2}$ for the ss-holonomies on $\wu$ and $1$ for the u-holonomies on $\wss$.
\end{remark}
\begin{remark}\label{rem.wilkinson}
	If $f$ is a $C^1$ Anosov flow, then the proof of \cite[Corollary 5.3]{Wilkinson} shows
	that the Bowen bracket is H\"older in its first variable with exponent
	$\frac{\lambda}{\lambda+k}$ using our notation.
	In the $C^1$ case, this is better than our exponent of $\frac{\lambda}{2k}$ 
	from Lemma \ref{lem.1stvar}, although both results are still limited to exponents at most 
	$\frac{1}{2}$.
	The proof in \cite{Wilkinson} does not extend to the $C^\alpha$ or Lipschitz setting of the
	current paper, because it would require a growth estimate on weak unstable sets which would take
	the place of  \cite[Lemma 5.4 (2)]{Wilkinson}. Our definition of $C^\alpha$ metric Anosov gives control on the neutral direction and the strong unstable direction separately, but our setup does not a priori yield any good estimates on the weak unstable sets.
\end{remark}

\begin{remark}\label{rem.optimal}
In the case of $C^{1+\alpha}$ Anosov flows, there are various strategies to obtain improved (and often optimal) exponents of the stable and unstable holonomies in \cite{PSW, SS, HW}. 
	Exponents  can be improved by distinguishing between the forward and backward expansion and contraction rates and by considering their pointwise behavior. 
	These arguments could be investigated in our setting. The maximum exponents for the Bowen bracket that our essential proof strategy provides is still $\frac{1}{2}$ in the first variable and $1$ in the second variable.
\end{remark}

We conclude the section by showing that the analogous result to Theorem \ref{prop.realstatement}
holds for the Bowen bracket $\ang{\,\cdot\,}{\,\cdot\,}^\iota$ of the reverse flow $f^\iota$.

\begin{lemma}\label{lem.reverse}
	Suppose that $f$ satisfies the hypotheses of Theorem \ref{prop.realstatement}. Then so does
	the reverse flow $f^\iota$.
\end{lemma}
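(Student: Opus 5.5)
The plan is to check each hypothesis of Theorem \ref{prop.realstatement} directly for $f^\iota_t = f_{-t}$, with the same metric $d$ and the same constants. The first step is to record how the dynamical sets transform under time reversal. From the definitions,
\[
\wss(y;f^\iota)=\wuu(y;f),\qquad \wuu(y;f^\iota)=\wss(y;f),
\]
and likewise for the local versions at each scale $\epsilon$. The condition ``$d(f^\iota_t x,f^\iota_t y)\le\epsilon$ for $t\ge0$'' is exactly ``$d(f_{s}x,f_{s}y)\le \epsilon$ for $s\le 0$''. The orbit parametrization also flips, $f^\iota_v y = f_{-v}y$. Consequently the weak sets satisfy $\ws_\epsilon(y;f^\iota)=\wu_\epsilon(y;f)$ and $\wu_\epsilon(y;f^\iota)=\ws_\epsilon(y;f)$.

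\textbf{Regularity conditions.} These transfer immediately.
\begin{itemize}
\item \eqref{eq.HW} is symmetric under swapping the roles of strong stable and strong unstable sets together with $t\mapsto -t$. Its two lines for $f^\iota$ are precisely the two lines for $f$, in the other order.
\item \eqref{eq.HF} involves $\abs{t}$, so it is invariant under $t\mapsto -t$.
\item \eqref{eq.HS} is also stated in terms of $\abs{t}$, and $d(y,f^\iota_t y)=d(y,f_{-t}y)$, so it holds verbatim.
\end{itemize}
Completeness and separability of $(Y,d)$ are unchanged, and continuity of the flow is clear.

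\textbf{Uniform canonical coordinates.} This is the only step with content, and I expect it to be the main obstacle. For $f^\iota$ at scale $(\epsilon,\delta)$ we need, for $d(x,y)\le\delta$, a unique $v$ with $\abs{v}\le\epsilon$ such that $\wuu_\epsilon(x)\cap\wss_\epsilon(f_{-v}y)$ is a nonempty singleton, with all sets taken for $f$.

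I would obtain this by applying the canonical coordinates of $f$ to the swapped pair $(y,x)$. Write $z=\ang{y}{x}$, so $z\in\wss_\epsilon(y)\cap\wuu_\epsilon(f_{u}x)$ with $u=v(y,x)$. Setting $w=f_{-u}z$, the point $w$ lies in $\wuu(x)$ and in $\wss(f_{-u}y)$.

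The difficulty is that flowing by $-u$ distorts the local scales. To control it I would use \eqref{eq.HF} and \eqref{eq.HS}, as in the estimate showing $w_1'\in\wuu_{2\epsilon}(w_2)$ in Proposition \ref{lem.1stvar}: one gets $w\in\wuu_{\epsilon'}(x)\cap\wss_{\epsilon'}(f_{-u}y)$ with $\epsilon'=\epsilon+C\epsilon^\alpha$ or similar. Since the definition only asks for the property at every small enough scale, the plan is to start from $f$-coordinates at a smaller scale $\epsilon_0$ with $\epsilon_0+2C\epsilon_0^\alpha\le\epsilon$, and then shrink $\delta$.

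Uniqueness of $v$ and of the intersection point is argued in reverse. Suppose $w'$ is another point in $\wuu_\epsilon(x)\cap\wss_\epsilon(f_{-v'}y)$ with $\abs{v'}\le\epsilon$. Then $f_{v'}w'$ lies in $\wss(y)\cap\wuu(f_{v'}x)$ at a slightly enlarged scale. Uniqueness for $f$ at that larger scale (for the pair $(y,x)$, which is again $\delta$-close) then forces $v'=u$ and $w'=w$.

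The delicate bookkeeping is choosing the pair of scales so that both existence (which shrinks the scale) and uniqueness (which needs $f$-coordinates at the enlarged scale) are available simultaneously. I would handle this by fixing $\epsilon$ small enough that $f$ has uniform canonical coordinates at scale $(2\epsilon,\delta_1)$, then proving existence from scale $\epsilon_0\ll\epsilon$, with $\delta=\min(\delta_0,\delta_1)$. All constants are uniform over $Y$, so the result is uniform canonical coordinates for $f^\iota$.
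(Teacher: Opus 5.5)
Your proposal follows the same route as the paper. You apply $f$'s canonical coordinates to the swapped pair $(y,x)$ and flow back by $-u$, getting existence for $f^\iota$ from a smaller $f$-scale and uniqueness from a larger one; the paper justifies these scale changes by uniform continuity of $f$, whereas you make them explicit via \eqref{eq.HS}, which is cleaner in the non-compact setting. One bookkeeping fix is needed: since $C\geq 1$ and $\epsilon^\alpha\geq\epsilon$ for $\epsilon\leq 1$, the enlarged scale $\epsilon+2C\epsilon^\alpha$ exceeds $2\epsilon$, so for uniqueness you need $f$-coordinates at a scale $(\epsilon_1,\delta_1)$ with $\epsilon_1\geq\epsilon+2C\epsilon^\alpha$, not at scale $(2\epsilon,\delta_1)$.
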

\begin{proof}
	The properties \eqref{eq.HW}, \eqref{eq.HS}, and \eqref{eq.HF} are all clearly reversible,
	so we just have to check that $f^\iota$ has uniform canonical coordinates.
	Choose an initial scale $(\epsilon,\delta)$ for the uniform canonical coordinates of $f$,
	and suppose that $0 < \epsilon' \leq \epsilon$. Suppose that $x,y \in Y$, $\abs{v} \leq
	\epsilon'$, and $\abs{v'} \leq \epsilon'$ are all given such that $d(x,y) \leq \delta$ and
	\[
		z \in \wuu_{\epsilon'}(x) \cap \wss_{\epsilon'}(f_vy), \quad z' \in \wuu_{\epsilon'}(x) \cap
		\wss_{\epsilon'}(f_{v'}y)
	.\]
	Assuming we had chosen $\epsilon'$ sufficiently small,  using uniform continuity of $f$ we can
	ensure that
	\[
		f_{-v}z \in \wss_\epsilon(y) \cap \wuu_\epsilon(f_{-v}x), \quad f_{-v'}z' \in \wss_\epsilon(y)
		\cap \wuu_\epsilon(f_{-v'}x)
	,\]
	and hence $v=v'$ and $z = z'$ because $f$ has uniform canonical coordinates.
	This shows uniqueness of $\ang{x}{y}^\iota$ at the scale $(\epsilon',\delta)$.

	Now let $(\epsilon'',\delta')$ be another scale for the canonical coordinates of
	$f$ such that $0 < \epsilon'' \leq \epsilon'$ and $0 < \delta' \leq \delta$.
	If $d(x,y) \leq \delta'$, then we have
	\[
	\ang{y}{x} \in \wss_{\epsilon''}(y) \cap \wuu_{\epsilon''}(f_v x), \quad \abs{v} \leq \epsilon''
	,\]
	and again using uniform continuity we can assume that $\epsilon''$ was chosen small enough
	that
	\[f_{-v}\ang{y}{x} \in \wuu_{\epsilon'}(x) \cap \wss_{\epsilon'}(f_{-v}y).\]
	This shows existence of $\ang{x}{y}^\iota = f_{-v}\ang{y}{x}$, and thus 
	$(\epsilon',\delta')$ is a scale for both the existence and uniqueness of
	uniform canonical coordinates for $f^\iota$.
\end{proof}

\section{Topological conjugacy of H\"older flows}\label{subsec.conjugacy}
We develop some more detailed results on the local metric structure of $C^\alpha$ metric Anosov flows and
prove Theorem \ref{Conjugacy}. In this section, the phase space is assumed to be compact to obtain the most natural setting and statement for Theorem \ref{Conjugacy}.
In Proposition \ref{rem.noncompactB}, we state the analogue of Theorem \ref{Conjugacy} that holds when compactness is relaxed. Following Ruelle's presentation of \cite[\S7.1]{Ruelle}, we begin with the following general definition which can be used to establish an abstract notion of local product structure at a topological level. See also Putnam's work on Smale spaces \cite{iP14}.
\begin{definition}\label{def.lps}
	Let $Y$ be a compact space.
	An \emph{(abstract) local bracket operation} on $Y$ is a continuous function
	\[ 
		[\,\cdot\, , \,\cdot\,] : (Y \times Y)_\delta \to Y
	\]
	for some $\delta>0$ which satisfies
	\begin{enumerate}[label=(\alph*)]
		\item $[x,x] =x$,\label{item.a}
		\item $[[x,y],z]=[x,z]$,\label{item.b}
		\item $[x,[y,z]]=[x,z]$,\label{item.c}
	\end{enumerate}
	whenever both sides of the equations are defined.
\end{definition}

Given an abstract local bracket $[\,\cdot\, , \,\cdot\,]$ on a compact metric space $(Y,d)$,
we define
\begin{gather*}
	L_\epsilon(y) = \{ x \mid x = [x,y]\mathrm{~and~ }  d(y, x)< \epsilon \}\\
	R_\epsilon(y) = \{ x \mid x = [y,x] \mathrm{~and~} d(y, x)< \epsilon \}.
\end{gather*}
for each $y \in Y$, so that $L_\epsilon(y)$ is the set of points fixed by the left side of the bracket operation with $y$ and $R_\epsilon(y)$ is the set of points fixed by the right side of the bracket operation with $y$. We then have the following lemmas.

\begin{lemma}\label{lem.lpscc}
	For every small enough $\epsilon > 0$, there is some $\delta > 0$ such that, if $d(x,y) \leq \delta$,
	then $\{[x,y]\} = R_\epsilon(x) \cap L_\epsilon(y)$.
\end{lemma}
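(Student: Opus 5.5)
We need to show two things for small $\epsilon$ and suitable $\delta$: that $[x,y]$ lies in both sets, and that any point of the intersection equals $[x,y]$. The whole argument uses only the axioms \ref{item.a}--\ref{item.c} together with uniform continuity of the bracket on the compact set $(Y\times Y)_{\delta_0}$, where $\delta_0$ is the domain scale from Definition \ref{def.lps}.

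\textbf{Membership.} Fix $\epsilon < \delta_0$. By uniform continuity and \ref{item.a}, there is $\delta>0$ with the following property: if $d(x,y)\le\delta$, then $d([x,y],x)<\epsilon$ and $d([x,y],y)<\epsilon$. Indeed, $[x,y]$ is close to $[x,x]=x$ and also to $[y,y]=y$. Shrinking $\delta$, we also ensure $\epsilon + \delta \le \delta_0$, so every bracket written below is defined. Now set $z=[x,y]$. Axiom \ref{item.c} gives $[x,z]=[x,[x,y]]=[x,y]=z$, and since $d(x,z)<\epsilon$ this puts $z\in R_\epsilon(x)$. Axiom \ref{item.b} gives $[z,y]=[[x,y],y]=[x,y]=z$, and since $d(y,z)<\epsilon$ this puts $z\in L_\epsilon(y)$.

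\textbf{Uniqueness.} Let $w\in R_\epsilon(x)\cap L_\epsilon(y)$. Then $w=[x,w]$ and $w=[w,y]$, with $d(x,w),d(y,w)<\epsilon\le\delta_0$, so these brackets are defined. Substituting the first identity into the second and applying \ref{item.b} gives
\[
w=[w,y]=[[x,w],y]=[x,y].
\]
For this step to be legitimate, all the intermediate pairs must lie in $(Y\times Y)_{\delta_0}$. The pair $(x,w)$ does, since $d(x,w)<\epsilon$. The pair $([x,w],y)=(w,y)$ does, since $d(w,y)<\epsilon$. The pair $(x,y)$ does, since $d(x,y)\le\delta$. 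So we can conclude $w=[x,y]$.

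\textbf{Main obstacle.} The only delicate point is the bookkeeping needed so that axioms \ref{item.b} and \ref{item.c} are applied only where both sides are defined. We handle this by choosing $\epsilon$ small compared with $\delta_0$, for instance $2\epsilon<\delta_0$, and then taking $\delta\le\epsilon$ from uniform continuity. With that choice, every pair appearing above has distance less than $2\epsilon<\delta_0$.
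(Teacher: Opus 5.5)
Your proof is correct and follows the paper's argument. You use uniform continuity of the bracket together with axiom \ref{item.a} to get $d(x,[x,y]),d(y,[x,y])<\epsilon$, axioms \ref{item.b} and \ref{item.c} to place $[x,y]$ in $R_\epsilon(x)\cap L_\epsilon(y)$, and the axioms to force any $w$ in the intersection to equal $[x,y]$. Your uniqueness chain $w=[w,y]=[[x,w],y]=[x,y]$ uses only \ref{item.b} and is a slightly shorter route than the paper's $z=[z,z]=[[x,z],[z,y]]=[x,[z,y]]=[x,y]$. Your domain bookkeeping is sound, since every pair involved has distance $<\epsilon$ or $\le\delta$.
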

\begin{proof}
	Using uniform continuity of $[\,\cdot\, ,\,\cdot\,]$,
	choose $\delta' > 0$ so that,
	for any $y_1,y_2,y_3 \in Y$ with maximum distance at most
	$\delta'$, then $[y_1,[y_2,y_3]]$ and $[[y_1,y_2],y_3]$ are well-defined.
	Let $0 < \epsilon \leq \delta'$ be given, and choose $0 < \delta \leq \delta'$ such that,
	for any $x,y \in Y$, then $d(x,y) \leq \delta$ implies that $d(x,[x,y]) = < \epsilon$
	and $d(y,[x,y]) < \epsilon$.

	Fix $x,y \in Y$ such that $d(x,y) \leq \delta$.
	First, suppose $z \in R_\epsilon(x) \cap L_\epsilon(y)$. Note that $x,y,z$
	have maximum distance at most $\delta'$, and in particular $[x,[z,y]]$, $[x,y]$, $[[x,y],y]$, and
	$[x,[x,y]]$ are all well-defined.
	We know that $z = [x,z] = [z,y]$ by definition, and hence 
	\[z = [z,z] = [[x,z],[z,y]] = [x,[z,y]] = [x,y].\]
	On the other hand, since $[x,y] = [[x,y],y] = [x,[x,y]]$, and we have $d(x,[x,y]) < \epsilon$
	and $d(y,[x,y]) < \epsilon$, then $[x,y] \in R_\epsilon(x) \cap
	L_\epsilon(y)$.
\end{proof}

\begin{lemma}[{\cite[\S7.1]{Ruelle}}]\label{lem.ruelle}
	For any small enough $\delta>0$ the map 
	\[
		[\,\cdot\, , \,\cdot\,] : L_\delta(y)  \times R_\delta(y) \to Y
	\]
	is a homeomorphism onto an open neighborhood of $y$ for any $y \in Y$, and
	the inverse of this map is given by
	\[
		x \mapsto ([x,y],[y,x])
	.\]
\end{lemma}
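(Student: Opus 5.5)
We will prove Lemma~\ref{lem.ruelle} using only the axioms \ref{item.a}--\ref{item.c} of Definition~\ref{def.lps}, continuity of the bracket, and compactness of $Y$. Write $\Phi_y(a,b)=[a,b]$ for $(a,b)\in L_\delta(y)\times R_\delta(y)$ and $\Psi_y(x)=([x,y],[y,x])$. The plan has three steps: show $\Phi_y$ and $\Psi_y$ are well defined and continuous, show $\Psi_y\circ\Phi_y=\mathrm{id}$, and show $\Phi_y\circ\Psi_y=\mathrm{id}$ on an open neighborhood $U_y$ of $y$ equal to the image of $\Phi_y$.

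For the first step, use uniform continuity of the bracket on the compact set $(Y\times Y)_{\delta_0}$. Choose $\delta'$ so that all the iterated brackets below of points within $\delta'$ of one another are defined, as in the proof of Lemma~\ref{lem.lpscc}. Then take $\delta\le\delta'/3$. For $a\in L_\delta(y)$ and $b\in R_\delta(y)$ we have $d(a,b)<2\delta$, so $[a,b]$ is defined. Continuity of $\Phi_y$ and of $\Psi_y$ is immediate from continuity of the bracket.

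For the second step, take $x=[a,b]$. Using \ref{item.b} and then $a=[a,y]$, we get $[x,y]=[[a,b],y]=[a,y]=a$. Using \ref{item.c} and then $b=[y,b]$, we get $[y,x]=[y,[a,b]]=[y,b]=b$. So $\Psi_y\circ\Phi_y=\mathrm{id}$, and in particular $\Phi_y$ is injective. We must also check that $\Psi_y$ lands in $L_\delta(y)\times R_\delta(y)$ when $x$ is close to $y$. By \ref{item.b} and \ref{item.a}, $[[x,y],y]=[x,y]$, and by \ref{item.c} and \ref{item.a}, $[y,[y,x]]=[y,x]$. The distance conditions $d([x,y],y)<\delta$ and $d([y,x],y)<\delta$ hold by continuity and \ref{item.a}, uniformly, once $d(x,y)<\delta''$ for a suitable $\delta''$.

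For the third step, take $x$ near $y$. Then $[[x,y],[y,x]]=[x,[y,x]]$ by \ref{item.b}, and this equals $[x,x]=x$ by \ref{item.c} and \ref{item.a}. So $\Phi_y\circ\Psi_y=\mathrm{id}$ on the ball $B(y,\delta'')$.

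It remains to identify an open image, and we expect this bookkeeping of scales to be the main obstacle: the image $\Phi_y(L_\delta\times R_\delta)$ need not obviously be open. We plan two ways around it. The first is to shrink the domain to $L_\delta(y)\times R_\delta(y)\cap\Psi_y(B(y,\delta''))$. The second is to set $U_y=\Phi_y(L_\delta\times R_\delta)$ and show directly that $U_y=\Psi_y^{-1}(L_\delta\times R_\delta)\cap B(y,\rho)$ for suitable $\rho$. Here $\Psi_y$ is continuous and $L_\delta$, $R_\delta$ are relatively open, because their defining conditions use the strict inequality $d<\delta$ together with the fixed-point condition. For the inclusion of $U_y$ into this set, we need every $[a,b]$ to lie in $B(y,\rho)$, where $\Phi_y\circ\Psi_y=\mathrm{id}$ holds. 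This follows by choosing $\delta$ small relative to $\rho$, using uniform continuity and $[y,y]=y$. For the reverse inclusion, if $x$ lies in the set, then $x=\Phi_y(\Psi_y(x))\in U_y$. Once this is done, $U_y$ is open, $\Phi_y$ is a continuous bijection onto $U_y$, and $\Psi_y$ is a continuous inverse, so $\Phi_y$ is a homeomorphism. All constants are uniform in $y$ by compactness.
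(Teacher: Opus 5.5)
Your proof is correct. It is the standard verification behind the result from Ruelle \S7.1, which the paper cites without proof, and your second workaround is the right one: the first changes the domain and so would not prove the lemma as stated.

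The one point that needs care is the order of constants. Fix $\rho$ first, using only the domain of the bracket, so that $[[x,y],y]=[x,y]$, $[y,[y,x]]=[y,x]$ and $[[x,y],[y,x]]=x$ hold on $B(y,\rho)$ independently of $\delta$. Do not take $\rho$ to be the $\delta''$ of your second step: that $\delta''$ depends on $\delta$, so ``choose $\delta$ small relative to $\rho$'' would become circular, and in skewed examples the image is not contained in $B(y,\delta'')$. With this order, $U_y=\{x\in B(y,\rho): d([x,y],y)<\delta,\ d([y,x],y)<\delta\}$ is visibly open.
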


\begin{lemma}\label{lem.lps}
	Suppose that $f$ is a $C^\alpha$ metric Anosov flow on $Y$. Then the
	Bowen brackets $\ang{\,\cdot\,}{\,\cdot\,}$ and
	$\ang{\,\cdot\,}{\,\cdot\,}^\iota$ of $f$ are abstract local bracket operations on some neighborhood of the diagonal in $Y \times Y$.
\end{lemma}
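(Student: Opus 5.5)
We have to check four things for $\ang{\cdot}{\cdot}$: continuity on some $(Y\times Y)_\delta$, and properties \ref{item.a}, \ref{item.b}, \ref{item.c} of Definition \ref{def.lps}. The statement for $\ang{\cdot}{\cdot}^\iota$ then follows by applying the same argument to $f^\iota$, which is again a $C^\alpha$ metric Anosov flow by Lemma \ref{lem.reverse}. Continuity is immediate from Corollary \ref{MainTheorem}, since the bracket is H\"older on $(Y\times Y)_\delta$. Property \ref{item.a} is also direct: $x\in \wss_\epsilon(x)\cap\wuu_\epsilon(f_0x)$ with $v=0$, so by uniqueness in Definition \ref{def.cc} we get $\ang{x}{x}=x$.

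For \ref{item.b} and \ref{item.c}, the strategy is to use the uniqueness clause of canonical coordinates at a small scale $(\epsilon,\delta)$, together with transitivity of the global sets. Set $w=\ang{x}{y}$, so that $w\in \wss_\epsilon(x)$ and $w\in\wuu_\epsilon(f_vy)$ with $\abs{v}\le\epsilon$.

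\textbf{Property \ref{item.b}.} We need $\ang{w}{z}=\ang{x}{z}$. Let $p=\ang{x}{z}$, so $p\in\wss_\epsilon(x)\cap\wuu_\epsilon(f_{v'}z)$. Since $p$ and $w$ both lie in $\wss(x)$, we have $p\in\wss(w)$. By the triangle inequality along forward orbits, $d(f_tp,f_tw)\le 2\epsilon$ for $t\ge0$, so $p\in\wss_{2\epsilon}(w)$. Hence $p$ is a candidate for $\ang{w}{z}$ at scale $(2\epsilon,\delta')$. Provided $d(w,z)\le\delta'$, uniqueness at scale $(2\epsilon,\delta')$ forces $p=\ang{w}{z}$. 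To arrange this, choose $\epsilon$ so that $2\epsilon$ is still an admissible canonical-coordinate scale with some $\delta'$, and then shrink the neighbourhood of the diagonal. By continuity, $d(w,x)$ is small when $d(x,y)$ is small, so the hypothesis that $x,y,z$ are mutually close gives $d(w,z)\le\delta'$. Note that the bracket computed at scale $2\epsilon$ agrees with the one at scale $\epsilon$, because uniqueness at the larger scale contains uniqueness at the smaller one.

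\textbf{Property \ref{item.c}.} We need $\ang{x}{\ang{y}{z}}=\ang{x}{z}$. Let $q=\ang{y}{z}\in\wuu_\epsilon(f_{u}z)$, and let $r=\ang{x}{q}\in\wss_\epsilon(x)\cap\wuu_\epsilon(f_{s}q)$. Then $f_s q\in \wuu_{\epsilon'}(f_{s+u}z)$, using uniform continuity of the flow on the compact space to control $d(f_{t+s}q,f_{t+s+u}z)$ for $t\le 0$. It follows that $r\in\wuu_{\epsilon+\epsilon'}(f_{s+u}z)$ with $\abs{s+u}\le2\epsilon$. Thus $r$ is a candidate for $\ang{x}{z}$ at the doubled scale, and uniqueness gives $r=\ang{x}{z}$.

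\textbf{Main obstacle.} The only real difficulty is the scale bookkeeping: the time shifts $v$ add, and the local leaves grow under composition. This requires the nested choice
\[\epsilon \ll \epsilon_0,\qquad \delta\ll\delta_0,\]
where $(\epsilon_0,\delta_0)$ is a canonical-coordinate scale, so that every intermediate point is within the uniqueness regime. Compactness, via uniform continuity of $f_t$ for $\abs{t}\le 2\epsilon$ and of the bracket, is what makes this choice uniform.
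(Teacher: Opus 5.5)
Your proposal is correct and is essentially the paper's argument. Continuity comes from Theorem \ref{prop.realstatement} (via Corollary \ref{MainTheorem}) and its time-reversed version, and properties (a)--(c) follow from the uniqueness clause of canonical coordinates at a slightly larger admissible scale; you spell this out in more detail than the paper's ``easy to check from their definitions.'' The only cosmetic slip is that in (c) the relevant scale is $\max\{2\epsilon,\epsilon+\epsilon'\}$ rather than exactly $2\epsilon$, but your nested choice $\epsilon\ll\epsilon_0$ already absorbs this.
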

\begin{proof}
The Bowen brackets are continuous on a neighborhood of the diagonal
by Proposition \ref{prop.realstatement} and Corollary \ref{cor.realstatementi}.
It is then easy to check from their definitions that they satisfy \ref{item.a},
\ref{item.b}, and \ref{item.c} on a possibly smaller neighborhood of the diagonal, and hence they
are local bracket operations on this neighborhood.
\end{proof}

	Since $\ang{\,\cdot\,}{\,\cdot\,}$ and $\ang{\,\cdot\,}{\,\cdot\,}^\iota$ are bracket operations by Lemma \ref{lem.lps}, and since they are H\"older by Corollaries \ref{MainTheorem} and \ref{ComplementaryResult}, then we obtain the following corollary directly
	from Lemma \ref{lem.ruelle} and the definitions of the Bowen brackets.
\begin{corollary}\label{cor.lps}
	Suppose that $f$ is a $C^\alpha$ metric Anosov flow on $Y$. Then 
	for any small enough $\epsilon > 0$, there is some $\delta > 0$ such that for any $y \in Y$, the maps
	\begin{gather*}
		\ang{\,\cdot\,}{\,\cdot\,} : (\wu_\epsilon(y) \cap B(y,\delta)) \times (\wss_\epsilon(y) \cap
		B(y,\delta)) \to Y,\\
		\ang{\,\cdot\,}{\,\cdot\,}^\iota : (\ws_\epsilon(y) \cap B(y,\delta)) \times (\wuu_\epsilon(y) \cap
		B(y,\delta)) \to Y
	\end{gather*}
	are bi-H\"older homeomorphisms onto open neighborhoods of $y$.
\end{corollary}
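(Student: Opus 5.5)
The plan is to obtain Corollary \ref{cor.lps} from Lemma \ref{lem.ruelle} applied to the two abstract local brackets of Lemma \ref{lem.lps}. Hölder continuity of the maps and of their inverses will then come from Corollaries \ref{MainTheorem} and \ref{ComplementaryResult}.

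The first step is to identify the sets $L_\delta(y)$ and $R_\delta(y)$ for the bracket $[x,y]=\ang{x}{y}$. By definition $\ang{x}{y}\in \wss_\epsilon(x)\cap\wu_\epsilon(y)$. Hence $x=\ang{x}{y}$ forces $x\in\wu_\epsilon(y)$, and conversely uniqueness in the canonical coordinates shows $x\in \wu_\epsilon(y)$ implies $\ang{x}{y}=x$. So $L_\delta(y)=\wu_\epsilon(y)\cap B(y,\delta)$. Similarly, $x=\ang{y}{x}\in\wss_\epsilon(y)$ gives $R_\delta(y)=\wss_\epsilon(y)\cap B(y,\delta)$. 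For the reverse bracket $\ang{\cdot}{\cdot}^\iota$, with $\{\ang{x}{y}^\iota\}=\wuu_\epsilon(x)\cap\ws_\epsilon(y)$, the same reasoning gives $L_\delta(y)=\ws_\epsilon(y)\cap B(y,\delta)$ and $R_\delta(y)=\wuu_\epsilon(y)\cap B(y,\delta)$.

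Here the scales $\epsilon$ and $\delta$ need care. Points of $\wu_\epsilon(y)$ within $\delta$ of $y$ must satisfy the uniqueness clause of Definition \ref{def.cc} at scale $(\epsilon,\delta)$. So I would first fix $\epsilon$ small, then shrink $\delta$ below the scale from canonical coordinates and below the threshold of Lemma \ref{lem.ruelle}. I expect this to be the most delicate point: the weak sets are unions over $|v|\le\epsilon$ of strong sets at scale $\epsilon$, and one has to confirm that the fixed-point characterization matches exactly, not just up to a change of scale. If it does not match exactly, I would fall back on Lemma \ref{lem.lpscc}. That lemma identifies $[x,y]$ with $R_\epsilon(x)\cap L_\epsilon(y)$, so it is enough to check inclusions in both directions between $L_\delta(y)$ and $\wu_\epsilon(y)\cap B(y,\delta)$ (and likewise for $R$) after adjusting constants.

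With the identifications in hand, Lemma \ref{lem.ruelle} says each map is a homeomorphism onto an open neighborhood of $y$, with inverse $x\mapsto(\ang{x}{y},\ang{y}{x})$, and analogously for $\ang{\cdot}{\cdot}^\iota$. The forward map is the restriction of the Bowen bracket to a subset of $(Y\times Y)_{2\delta}$, so it is Hölder by Corollary \ref{MainTheorem} (respectively Corollary \ref{ComplementaryResult}). Each coordinate of the inverse is the bracket composed with the Lipschitz map $x\mapsto(x,y)$ or $x\mapsto(y,x)$, so it is Hölder for the same reason. Since the product carries the max metric, the inverse is Hölder. The constants are uniform in $y$ because the Hölder constants of the brackets are uniform on the neighborhood of the diagonal, so both maps are bi-Hölder homeomorphisms as claimed.
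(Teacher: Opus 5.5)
Your proposal is correct and follows the paper's route. You apply Lemma \ref{lem.ruelle} to the Bowen brackets, which are abstract local brackets by Lemma \ref{lem.lps}, and identify $L_\delta(y)$ and $R_\delta(y)$ with the stated local sets directly from the definitions; H\"older continuity of the map and of its inverse $x\mapsto(\ang{x}{y},\ang{y}{x})$ then comes from Corollaries \ref{MainTheorem} and \ref{ComplementaryResult}. The identification you give is already exact at scale $(\epsilon,\delta)$: trivially $x\in\wss_\epsilon(x)$, so uniqueness in the canonical coordinates forces $\ang{x}{y}=x$ for $x\in\wu_\epsilon(y)\cap B(y,\delta)$, and the fallback to Lemma \ref{lem.lpscc} is not needed.
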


It is convenient to work with only one scale $\tau$ instead of the two
scales $\epsilon$ and $\delta$. The following lemma allows us to make this adjustment.
\begin{lemma}\label{lem.subset}
	Suppose that $f$ is a $C^\alpha$ metric Anosov flow on $Y$.
	For any $\epsilon,\delta > 0$ there is some $\tau > 0$ such that, for every $y \in Y$, we have
	\begin{gather*}
		\wuu_\tau(y) \subseteq \wuu_\epsilon(y)\cap B(y,\delta), \quad
		\wss_\tau(y) \subseteq \wss_\epsilon(y)\cap B(y,\delta), \\
		\ws_\tau(y) \subseteq \ws_\epsilon(y)\cap B(y,\delta), \quad
		\wu_\tau(y) \subseteq \wu_\epsilon(y)\cap B(y,\delta).
	\end{gather*}
	Furthermore, for any small enough $\epsilon > 0$, for any $\tau > 0$ there is some $\delta > 0$
	such that, for every $y \in Y$, we have
	\begin{gather*}
		\wuu_\epsilon(y)\cap B(y,\delta) \subseteq \wuu_\tau(y), \quad
		\wss_\epsilon(y)\cap B(y,\delta) \subseteq \wss_\tau(y), \\
		\ws_\epsilon(y)\cap B(y,\delta) \subseteq \ws_\tau(y), \quad
		\wu_\epsilon(y)\cap B(y,\delta) \subseteq \wu_\tau(y).
	\end{gather*}
\end{lemma}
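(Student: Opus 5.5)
The first group of inclusions follows from the definitions together with \eqref{eq.HS}, and the second group from compactness. I will treat the strong sets first and then deduce the weak ones from them.

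\emph{First group.} Given $\epsilon,\delta>0$, I take $\tau\le\min\{\epsilon,\delta/2\}$ small enough that also $C\tau^\alpha<\delta/2$. If $x\in\wuu_\tau(y)$, then by definition $d(f_tx,f_ty)\le\tau$ for all $t\le 0$. Taking $t=0$ gives $d(x,y)\le\tau<\delta$, so $x\in\wuu_\epsilon(y)\cap B(y,\delta)$. The $\wss$ case is identical.

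For the weak sets, let $x\in\ws_\tau(y)$. Then $x\in\wss_\tau(f_vy)$ with $\abs{v}\le\tau\le\epsilon$, so $x\in\ws_\epsilon(y)$. Moreover, by \eqref{eq.HS},
\[
d(x,y)\le d(x,f_vy)+d(f_vy,y)\le\tau+C\tau^\alpha<\delta .
\]
The $\wu$ case is the same.

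\emph{Second group: the strong sets.} Fix $\epsilon$ small, at least $\epsilon\le\eta$ and also below the canonical-coordinate scale. I argue by contradiction using compactness. Suppose that for some $\tau>0$ no $\delta$ works for $\wuu$. Then there are $y_n$ and $x_n\in\wuu_\epsilon(y_n)$ with $d(x_n,y_n)\to0$ but $x_n\notin\wuu_\tau(y_n)$. A cleaner alternative is to use \eqref{eq.HW}: for $x\in\wuu_\epsilon(y)\subseteq\wuu_\eta(y)$ and $t\ge0$,
\[
d(f_{-t}x,f_{-t}y)\le Ce^{-\lambda t}d(x,y)^\alpha\le C\delta^\alpha .
\]
So choosing $\delta$ with $C\delta^\alpha\le\tau$ gives $x\in\wuu_\tau(y)$ directly. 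The $\wss$ case is symmetric. No compactness is needed here, only $\epsilon\le\eta$.

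\emph{Second group: the weak sets.} This is the main obstacle. Let $x\in\ws_\epsilon(y)\cap B(y,\delta)$, so $x\in\wss_\epsilon(f_vy)$ with $\abs{v}\le\epsilon$. I must show that $\abs{v}$ is small and that $d(x,f_vy)$ is small.

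I plan to use uniqueness in the canonical coordinates, via the identity $x=\ang{x}{y}$ for the reverse bracket. Indeed $x\in\wuu_\epsilon(x)\cap\ws_\epsilon(y)$, so by uniqueness $x=\ang{x}{y}^\iota$, with associated time $v=v^\iota(x,y)$. This requires first shrinking $\epsilon$ below the scale of the uniform canonical coordinates of $f^\iota$ from Lemma \ref{lem.reverse}, and taking $\delta$ at most the corresponding $\delta$.

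The time $v^\iota(\cdot,\cdot)$ is continuous on the compact set $(Y\times Y)_\delta$. This follows from continuity of the bracket (Corollary \ref{ComplementaryResult}) together with \eqref{eq.HS}, which recovers $v$ continuously from $f_{-v}$ of a point. It is also zero on the diagonal, since $\ang{y}{y}^\iota=y$ with $v=0$ by uniqueness. Hence uniform continuity gives $\abs{v}\le\tau$ once $\delta$ is small.

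Then
\[
d(x,f_vy)\le\delta+C\abs{v}^\alpha ,
\]
which is at most $\tau$ after shrinking $\delta$ further. By the strong-set step this places $x$ in $\wss_\tau(f_vy)$, hence $x\in\ws_\tau(y)$. The $\wu$ case is the same, using $\ang{\cdot}{\cdot}$ instead of $\ang{\cdot}{\cdot}^\iota$.
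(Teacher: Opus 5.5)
Your proof is correct, but for the second group of inclusions it follows a different route from the paper.

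\textbf{The paper's route.} It uses only canonical coordinates and uniform continuity of the flow. It fixes a scale $\epsilon_0$ such that $x\in\wss_{\epsilon_0}(y)\cap\wu_{\epsilon_0}(z)$ forces $x=\ang{y}{z}$, and takes $(\tau,\delta)$ to be a canonical-coordinate scale. For $x\in\wss_\epsilon(y)\cap B(y,\delta)$, existence at scale $\tau$ gives $\ang{y}{x}\in\wss_\tau(y)\cap\wu_\tau(x)$. Both $\ang{y}{x}$ and $x$ then lie in $\wss_{\epsilon_0}(x)\cap\wu_{\epsilon_0}(x)$, so $x=\ang{x}{x}=\ang{y}{x}\in\wss_\tau(y)$. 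The same trick with $\ang{x}{y}$ and with $\ang{\cdot}{\cdot}^\iota$ handles $\wu$, $\wuu$ and $\ws$.

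\textbf{Your route.} Your strong-set step via \eqref{eq.HW} is more direct and quantitative. It gives an explicit $\delta=(\tau/C)^{1/\alpha}$ and needs neither canonical coordinates nor compactness. Your weak-set step makes the same kind of identification, $x=\ang{x}{y}^\iota$. You then control the time shift through continuity of $v^\iota$, which costs you Corollary \ref{ComplementaryResult}, the lower bound in \eqref{eq.HS}, and a uniform-continuity argument.

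That detour can be avoided. Once $x=\ang{x}{y}^\iota$ at scale $\epsilon$, uniqueness at scale $\epsilon$ shows that the bracket produced at any smaller canonical-coordinate scale $(\tau,\delta_\tau)$ is the same point. Hence $x\in\ws_\tau(y)$ at once; this is exactly the paper's idea.

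\textbf{If you keep your route, tidy three points.}
\begin{enumerate}
\item Give the continuity of $v^\iota$ a line of justification. By the proof of Lemma \ref{lem.reverse}, the reverse bracket is the flow translate of $\ang{y}{x}$ by this same time, up to sign. Both brackets are continuous, and the lower bound in \eqref{eq.HS} then forces the time to vary continuously.
\item In the first group, add $\tau\le\eta$ so that \eqref{eq.HS} applies.
\item In the last step you need $d(x,f_vy)<(\tau/C)^{1/\alpha}$, the threshold of your strong-set step, not merely $d(x,f_vy)\le\tau$. You get this by shrinking $\delta$, and with it the bound on $\abs{v}$, further.
\end{enumerate}
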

\begin{proof}
	The first group of inclusions is obvious by definitions and the uniform continuity of the flow.

	For the second group of inclusions, we use the canonical coordinates of $f$ and $f^\iota$. Choose some $\epsilon_0 > 0$ such that, for any $x,y,z \in Y$, then $x \in
	\wss_{\epsilon_0}(y)\cap\wu_{\epsilon_0}(z)$ implies that $x = \ang{y}{z}$, and $x \in
	\wuu_{\epsilon_0}(y)\cap\ws_{\epsilon_0}(z)$ implies that $x = \ang{y}{z}^\iota$.
	Let $0 < \tau,\epsilon \leq \epsilon_0$ be chosen small enough so that there is some $\delta > 0$
	such that $(\tau,\delta)$ is a scale for the canonical coordinates of $f$ and $f^\iota$,
	and for any $x,y,z \in Y$, then
	\[
		z \in W^\ell_\tau(y) \text{ and } x \in W^\ell_\epsilon(y) \implies z \in W^\ell_{\epsilon_0}(x)
	\]
	for any $\ell \in \{\mathrm{ss},\mathrm{uu},\mathrm{s},\mathrm{u}\}$.
	Suppose that $x \in \wss_\epsilon(y)\cap B(y,\delta)$. Then $d(x,y) < \delta$, so
	\[
		\ang{y}{x} \in \wss_\tau(y) \cap \wu_\tau(x) \subseteq
		\wss_{\epsilon_0}(x)\cap\wu_{\epsilon_0}(x)
	,\]
	and therefore $x = \ang{x}{x} = \ang{y}{x} \in \wss_\tau(y)$.
	Similarly, if $x \in \wu_\epsilon(y)\cap B(y,\delta)$, then
	\[
		\ang{x}{y} \in \wss_\tau(x)\cap \wu_\tau(y) \subseteq \wss_{\epsilon_0}(x) \cap
		\wu_{\epsilon_0}(x)
	,\]
	and therefore $x = \ang{x}{x} = \ang{x}{y} \in \wu_\tau(y)$. Thus we have shown that, if $\epsilon
	> 0$ and $\tau > 0$ are sufficiently small, then there is some $\delta > 0$ such that
	\[
		\wss_\epsilon(y)\cap B(y,\delta) \subseteq \wss_\tau(y), \quad
		\wu_\epsilon(y)\cap B(y,\delta) \subseteq \wu_\tau(y)
	.\]
	Clearly this implies the same for all $\tau > 0$. Using the reverse Bowen bracket
	$\ang{\,\cdot\,}{\,\cdot\,}^\iota$, then by the same proof we obtain the inclusions
	\[
		\wuu_\epsilon(y)\cap B(y,\delta) \subseteq \wuu_\tau(y), \quad
		\ws_\epsilon(y)\cap B(y,\delta) \subseteq \ws_\tau(y). \qedhere
	\]
	\end{proof}

We have the following immediate consequence of Corollary \ref{cor.lps} and Lemma
\ref{lem.subset}.

\begin{corollary}\label{cor.lpstau}
Suppose that $f$ is a $C^\alpha$ metric Anosov flow on $Y$.
Then for any sufficiently small $\tau > 0$, the maps
\[
\ang{\,\cdot\,}{\,\cdot\,} : \wu_\tau(y) \times \wss_\tau(y) \to Y, \quad
\ang{\,\cdot\,}{\,\cdot\,}^\iota : \ws_\tau(y) \times \wuu_\tau(y) \to Y
\]
are bi-H\"older homeomorphisms onto neighborhoods of $y$ for any $y \in Y$.
\end{corollary}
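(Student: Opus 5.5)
The plan is to derive Corollary \ref{cor.lpstau} directly from Corollary \ref{cor.lps} by using Lemma \ref{lem.subset} to trade the two-scale domains $W^\ell_\epsilon(y)\cap B(y,\delta)$ for single-scale local leaves $W^\ell_\tau(y)$. I treat the bracket $\ang{\cdot}{\cdot}$ in detail. The reverse bracket $\ang{\cdot}{\cdot}^\iota$ is handled identically, using the second map in Corollary \ref{cor.lps} and the inclusions for $\ws$ and $\wuu$ in Lemma \ref{lem.subset}.

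First, fix $\epsilon>0$ small enough for Corollary \ref{cor.lps}, and let $\delta>0$ be the associated scale. Then, uniformly in $y\in Y$, the map
\[
\Phi_y:=\ang{\cdot}{\cdot}:(\wu_\epsilon(y)\cap B(y,\delta))\times(\wss_\epsilon(y)\cap B(y,\delta))\to Y
\]
is a bi-H\"older homeomorphism onto an open neighborhood $U_y$ of $y$. Next, apply the first part of Lemma \ref{lem.subset} to this pair $(\epsilon,\delta)$ to obtain $\tau_0>0$ such that $\wu_\tau(y)\subseteq\wu_\epsilon(y)\cap B(y,\delta)$ and $\wss_\tau(y)\subseteq\wss_\epsilon(y)\cap B(y,\delta)$ for every $\tau\le\tau_0$ and every $y$. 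This uses only the monotonicity of the local leaves in the scale. Then $\ang{\cdot}{\cdot}$ on $\wu_\tau(y)\times\wss_\tau(y)$ is the restriction of $\Phi_y$. A restriction of a bi-H\"older homeomorphism is a bi-H\"older homeomorphism onto its image, with the same constants, so injectivity and the H\"older bounds in both directions are inherited.

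It remains to check that the image $\Phi_y(\wu_\tau(y)\times\wss_\tau(y))$ is a neighborhood of $y$. This is where the second part of Lemma \ref{lem.subset} enters. For the given $\tau$, it provides $\delta'>0$, which we may take $\le\delta$, such that $\wu_\epsilon(y)\cap B(y,\delta')\subseteq\wu_\tau(y)$ and $\wss_\epsilon(y)\cap B(y,\delta')\subseteq\wss_\tau(y)$. Hence the image contains $V_y:=\Phi_y\bigl((\wu_\epsilon(y)\cap B(y,\delta'))\times(\wss_\epsilon(y)\cap B(y,\delta'))\bigr)$. This set is the image under the homeomorphism $\Phi_y$ of a relatively open set containing $(y,y)$. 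Since $U_y$ is open in $Y$, $V_y$ is open in $Y$, and it contains $\Phi_y(y,y)=\ang{y}{y}=y$.

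The main obstacle is ensuring that all constants are uniform in $y$. This is already built into Corollary \ref{cor.lps} and Lemma \ref{lem.subset}, since both are stated for every $y$ with scales independent of $y$. I would also double-check that "sufficiently small $\tau$" is compatible with both parts of Lemma \ref{lem.subset}. The first part constrains only $\tau\le\tau_0$, while the second holds for every $\tau>0$ with a $\delta'$ depending on $\tau$. So every $\tau\in(0,\tau_0]$ works.
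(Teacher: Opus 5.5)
Your proposal is correct and is the argument the paper intends: the paper records this corollary as an immediate consequence of Corollary \ref{cor.lps} and Lemma \ref{lem.subset}, and you have written out the restriction step and the openness-of-image step that it leaves implicit. Two small points: you should fix $\epsilon$ small enough for both Corollary \ref{cor.lps} and the second half of Lemma \ref{lem.subset}, and the inclusion $\wu_\tau(y)\subseteq B(y,\delta)$ also uses uniform continuity of the flow (e.g.\ the upper bound in \eqref{eq.HS}), not only monotonicity in the scale, though nothing breaks since you cite Lemma \ref{lem.subset} for it.
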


\begin{lemma}\label{lem.halfhomeo}
	Suppose that $f$ is a $C^\alpha$ metric Anosov flow on $Y$. Then
	for any small enough $\epsilon > 0$ there is some $\tau_0 > 0$ such that,
	for any $0< \tau \leq \tau_0$, the map
	\[
		\ang{\,\cdot\,}{\,\cdot\,}^\iota : f_{[-\tau,\tau]}y \times \wuu_\tau(y)
		\to \wu_\epsilon(y)
	\]
	is a bi-H\"older homeomorphism onto a neighborhood of $y$ inside of $\wu_\epsilon(y)$ for any $y
	\in Y$. Furthermore, we have $\ang{f_vy}{x}^\iota = f_vx$ for all $v \in [-\tau,\tau]$ and
	$x \in \wuu_\tau(y)$.
\end{lemma}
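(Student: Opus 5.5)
The plan is to deduce the statement from Corollary \ref{cor.lpstau} by restricting the reverse Bowen bracket to the orbit segment $f_{[-\tau,\tau]}y\subseteq\ws_\tau(y)$. The first step is the identity $\ang{f_vy}{x}^\iota=f_vx$ for $x\in\wuu_\tau(y)$ and $\abs{v}\le\tau$. Since $x\in\wuu_\tau(y)$, the point $f_vx$ lies in $\wuu(f_vy)$, and by uniform continuity of the flow (choosing $\tau_0$ small relative to $\epsilon$) we get $f_vx\in\wuu_{\epsilon_0}(f_vy)$. Also $f_vx\in\ws_{\epsilon_0}(x)$, since it is a short flow translate of $x$. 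The uniqueness part of the canonical coordinates for $f^\iota$ (as in the proof of Lemma \ref{lem.subset}: $z\in\wuu_{\epsilon_0}(a)\cap\ws_{\epsilon_0}(b)$ implies $z=\ang{a}{b}^\iota$) then gives $f_vx=\ang{f_vy}{x}^\iota$, provided $d(f_vy,x)$ is below the $\delta$-scale. That proviso holds once $\tau_0$ is small.

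Next, the map $(f_vy,x)\mapsto f_vx$ takes values in $\wu_\epsilon(y)$. Indeed, $f_vx\in\wuu(f_vy)$ with $\abs{v}\le\tau\le\epsilon$, and the backward orbits satisfy $d(f_{t}f_vx,f_tf_vy)=d(f_{t+v}x,f_{t+v}y)$. For $t\le 0$ this is at most $\epsilon$ except on the short window $t+v\in[0,\tau]$, which is controlled by \eqref{eq.HF} once $\tau_0$ is small. So $f_vx\in\wuu_\epsilon(f_vy)\subseteq\wu_\epsilon(y)$.

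Since $f_{[-\tau,\tau]}y\subseteq\ws_\tau(y)$, the map in the statement is the restriction of the bi-H\"older homeomorphism of Corollary \ref{cor.lpstau} to the closed subset $f_{[-\tau,\tau]}y\times\wuu_\tau(y)$. It is therefore a bi-H\"older homeomorphism onto its image, with uniform constants.

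The remaining point, which I expect to be the main obstacle, is that the image is a neighborhood of $y$ inside $\wu_\epsilon(y)$. The approach is to use that Corollary \ref{cor.lpstau} gives an open set $U\ni y$ with $U=\ang{\ws_\tau(y)}{\wuu_\tau(y)}^\iota$. Then show $U\cap\wu_\epsilon(y)$ is contained in the image. Take $z\in U\cap\wu_\epsilon(y)$ and write $z=\ang{a}{x}^\iota$ with $a\in\ws_\tau(y)$ and $x\in\wuu_\tau(y)$, so that $z\in\ws(a)\cap\wuu(a)$ locally. Because $z\in\wu_\epsilon(y)$ and $x\in\wuu_\tau(y)$, the point $z$ is also locally on $\wu(x)$. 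Combined with $z\in\ws_{\epsilon_0}(x)$, the local intersection $\ws_{\epsilon_0}(x)\cap\wu_{\epsilon_0}(x)$ is, by uniqueness in the canonical coordinates for $f$ (the same trick as in Lemma \ref{lem.subset}), the local orbit through $x$. Hence $z=f_vx$ for some small $v$. Using the inverse map $z\mapsto(\ang{z}{y}^\iota,\ang{y}{z}^\iota)$ from Lemma \ref{lem.ruelle}, we identify $a=\ang{z}{y}^\iota=f_vy$, with $\abs{v}\le\tau$ after shrinking $U$. The delicate bookkeeping is choosing $\epsilon_0,\tau_0$ and the neighborhood so that all these uniqueness statements apply simultaneously. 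The continuity of $v$ follows from \eqref{eq.HS}, and \eqref{eq.HS} also shows that the parametrization $v\mapsto f_vy$ is bi-H\"older, so the orbit segment can be identified H\"older-equivalently with $[-\tau,\tau]$ if desired.
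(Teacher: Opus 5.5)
Your proposal is correct. Its first three steps match the paper's proof: the identity $\ang{f_vy}{x}^\iota=f_vx$ from uniqueness of the reverse bracket, the inclusion of the image in $\wu_\epsilon(y)$ by uniform continuity, and the bi-H\"older property obtained by restricting the chart of Corollary \ref{cor.lpstau} to $f_{[-\tau,\tau]}y\times\wuu_\tau(y)\subseteq\ws_\tau(y)\times\wuu_\tau(y)$.

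Where you differ is the neighborhood claim. The paper observes that the identity already computes the image as $A=\bigcup_{\abs{v}\le\tau}f_v\wuu_\tau(y)$, and then works only inside weak unstable sets. It chooses $0<r\le\tau$ so that $d(x,y)\le r$ implies $d(f_vx,f_vy)\le\tau$ for $\abs{v}\le r$. Then every point of $\wuu_r(f_vy)$ is $f_v$ of a point of $\wuu_\tau(y)$, so $\wu_r(y)\subseteq A$. Lemma \ref{lem.subset} then supplies $\wu_\epsilon(y)\cap B(y,\delta)\subseteq\wu_r(y)$.

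You instead pull a point $z\in U\cap\wu_\epsilon(y)$ back through the ambient reverse-bracket chart. You use uniqueness of the forward bracket to see that $\ws\cap\wu$ near $x$ is the local orbit of $x$. You then identify the $\ws$-coordinate $a=\ang{z}{y}^\iota$ as $f_vy$ with $\abs{v}\le\tau$. This is essentially the uniqueness argument from the proof of Lemma \ref{lem.subset} redone inline. It makes the chart coordinates of points of $\wu_\epsilon(y)$ explicit. The cost is extra bookkeeping: the scales $\epsilon_0$ and $\tau_0$, the shrinking of $U$, and a separate bound on $\abs{v}$. The paper's route avoids all of this by delegating the uniqueness step to Lemma \ref{lem.subset}.

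One slip to fix: $z=\ang{a}{x}^\iota$ lies in $\wuu(a)\cap\ws(x)$, not in $\ws(a)\cap\wuu(a)$ as you wrote. Your later use of $z\in\ws_{\epsilon_0}(x)$ is the correct version.
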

\begin{proof}
	Take some $\epsilon > 0$ such that, for any $x,y,z \in Y$, if
	$x \in \wuu_{\epsilon}(y) \cap \ws_{\epsilon}(z)$, then $x = \ang{y}{z}^\iota$.
	It is clear from uniform continuity of $f$ and the definition of canonical coordinates
	that any small enough $\epsilon > 0$ satisfies this.
	We also assume that $\epsilon$ is small enough to satisfy Lemma \ref{lem.subset}.
	Choose $0 < \tau_0 \leq \epsilon$ so that $d(x,y) \leq \tau_0$ implies that $d(f_vx,f_vy) \leq
	\epsilon$ for every $x,y \in Y$ and every $\abs{v} \leq \tau_0$,
	and so that $\tau_0$ is small enough to satisfy Corollary \ref{cor.lpstau}.

	Suppose that $0 < \tau \leq \tau_0$, and let $\abs{v} \leq \tau$ and $x \in \wuu_\tau(y)$.
	Note that
	\[
		f_vx \in \wuu_\epsilon(f_vy) \cap \ws_\epsilon(x)
	,\] 
	and hence $\ang{f_vy}{x}^\iota = f_vx$, which proves the last claim.
	By Corollary \ref{cor.lpstau}, we know that $\ang{\,\cdot\,}{\,\cdot\,}^\iota$ restricts to a
	bi-H\"older homeomorphism of $f_{[-\tau,\tau]}y \times \wuu_\tau(y)$ onto its image, which we have
	now just shown is the set
	\[
		A = \bigcup_{v\in[-\tau,\tau]}f_v \wuu_\tau(y)
	.\]
	Thus we only have to check that $A$ is a neighborhood of $y$ inside of $\wu_\epsilon(y)$.

	Clearly $A \subseteq \wu_\epsilon(y)$ by our assumptions on $\tau_0$ and $\epsilon$. On the
	other hand, if $0 < r \leq \tau$ is another scale chosen so that $d(x,y) \leq r$ implies
	$d(f_vx,f_vy) \leq \tau$ for every $x,y \in Y$ and every $\abs{v} \leq r$,
	we see that $\wu_r(y) \subseteq A$. 
	By Lemma \ref{lem.subset}, $\wu_r(y)$ is a neighborhood of $y$ inside of $\wu_\epsilon(y)$, and
	hence so is $A$.
\end{proof}

By \eqref{eq.HS}, we have the following immediate corollary of Lemma \ref{lem.halfhomeo}.
\begin{corollary}\label{cor.halfhomeo}
	Suppose that $f$ is a $C^\alpha$ metric Anosov flow on $Y$.
	Then for any small enough $\epsilon > 0$ there exists $\tau_0 > 0$ such that,
	for any $0< \tau \leq \tau_0$ and any $y \in Y$, the restricted flow map
	\[
		\FFF: \wuu_\tau(y) \times [-\tau,\tau] \to \wu_\epsilon(y)
	\]
	is a bi-H\"older homeomorphism onto a neighborhood of $y$ inside of $\wu_\epsilon(y)$
\end{corollary}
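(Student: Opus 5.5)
The plan is to factor the restricted flow map through the map of Lemma \ref{lem.halfhomeo}. Fix $\epsilon>0$ and $\tau_0>0$ as in Lemma \ref{lem.halfhomeo}, and let $0<\tau\leq\tau_0$ and $y\in Y$. Consider the map
\[
\Phi : \wuu_\tau(y)\times[-\tau,\tau]\to f_{[-\tau,\tau]}y\times \wuu_\tau(y), \qquad \Phi(x,v)=(f_vy,x).
\]
By the last claim of Lemma \ref{lem.halfhomeo}, $\ang{f_vy}{x}^\iota=f_vx=\FFF(x,v)$. Hence $\FFF = \ang{\,\cdot\,}{\,\cdot\,}^\iota\circ\Phi$ on $\wuu_\tau(y)\times[-\tau,\tau]$. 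Lemma \ref{lem.halfhomeo} shows that $\ang{\,\cdot\,}{\,\cdot\,}^\iota$ is a bi-H\"older homeomorphism onto a neighborhood of $y$ in $\wu_\epsilon(y)$, and compositions of bi-H\"older homeomorphisms are bi-H\"older. So it suffices to prove that $\Phi$ is a bi-H\"older homeomorphism with uniform constants.

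Since $\Phi$ is the identity in the $\wuu_\tau(y)$ coordinate, and both product spaces carry the max metric, the problem reduces to a single claim. We need that $v\mapsto f_vy$ is a bi-H\"older homeomorphism from $[-\tau,\tau]$ onto the orbit segment $f_{[-\tau,\tau]}y$, with constants independent of $y$. This follows from \eqref{eq.HS}. For $v,w\in[-\tau,\tau]$ we have $d(f_vy,f_wy)=d(f_vy,f_{w-v}(f_vy))$ and $\abs{w-v}\leq 2\tau$. Shrinking $\tau_0$ so that $2\tau_0\leq\eta$, \eqref{eq.HS} applied at the point $f_vy$ gives
\[
\tfrac1C\abs{w-v}^{1/\alpha}\leq d(f_vy,f_wy)\leq C\abs{w-v}^\alpha .
\]
The upper bound gives H\"older continuity of the map. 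The lower bound shows injectivity and H\"older continuity of the inverse, with exponent $\alpha$ and constant $C^\alpha$. All constants depend only on $C,\alpha$, so they are uniform in $y$.

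No step is a real obstacle; the main point of care is bookkeeping. First, $\tau_0$ may need to be shrunk so that $2\tau_0\le\eta$. Second, we must confirm that Lemma \ref{lem.halfhomeo} is applied with the orbit segment parametrized as a subset of $Y$ with the metric $d$, so that the time parametrization is exactly what converts it into the $[-\tau,\tau]$ factor. Once this is checked, the image of $\FFF$ equals the image of $\ang{\,\cdot\,}{\,\cdot\,}^\iota$, which is a neighborhood of $y$ in $\wu_\epsilon(y)$, and the corollary follows.
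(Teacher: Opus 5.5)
Your proposal is correct and is the paper's argument: the paper states the corollary as immediate from Lemma \ref{lem.halfhomeo} together with \eqref{eq.HS}. You make this explicit by factoring the flow map as $\ang{\,\cdot\,}{\,\cdot\,}^\iota\circ\Phi$ and using \eqref{eq.HS} (after shrinking $\tau_0$ so that $2\tau_0\le\eta$) to see that $v\mapsto f_vy$ is bi-H\"older uniformly in $y$.
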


Combining Corollary \ref{cor.lps} and \ref{cor.halfhomeo}, we obtain now a
complete picture of the local H\"older structure of a $C^\alpha$ metric Anosov flow, allowing us to
prove Theorem \ref{Conjugacy}.

\begin{lemma}\label{lem.homeo}
	For any sufficiently small $\tau > 0$, for any $y \in Y$, the function
	\[
		\Phi_y : \wuu_{\tau}(y)\times [-\tau,\tau] \times \wss_{\tau}(y)  \to U(y,\tau),
		\quad  \Phi_y(x,t,z) = \ang{f_tx}{z} 
	,\]
	is a bi-H\"older homeomorphism onto a neighborhood $U(y,\tau)$ of $y$ in $Y$.
\end{lemma}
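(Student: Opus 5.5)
We will obtain $\Phi_y$ as a composition of two bi-H\"older homeomorphisms already constructed. First, by Corollary \ref{cor.halfhomeo} (or Lemma \ref{lem.halfhomeo}), for small $\epsilon>0$ and $0<\tau\le\tau_0$ the map
\[
\Psi_y:\wuu_\tau(y)\times[-\tau,\tau]\to \wu_\epsilon(y),\qquad (x,t)\mapsto f_tx,
\]
is a bi-H\"older homeomorphism onto a neighborhood $A_y$ of $y$ in $\wu_\epsilon(y)$. Second, by Corollary \ref{cor.lps}, for suitable $\epsilon,\delta$ the bracket $\ang{\cdot}{\cdot}:(\wu_\epsilon(y)\cap B(y,\delta))\times(\wss_\epsilon(y)\cap B(y,\delta))\to Y$ is a bi-H\"older homeomorphism onto an open neighborhood of $y$. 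Then $\Phi_y=\ang{\cdot}{\cdot}\circ(\Psi_y\times\mathrm{id})$, so the claim follows once the domains match and the image is a neighborhood of $y$.

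For the domain matching we fix $\epsilon,\delta$ from Corollary \ref{cor.lps}. Using Lemma \ref{lem.subset} (first group of inclusions) together with uniform continuity of the flow, we choose $\tau$ small enough that $\wss_\tau(y)\subseteq\wss_\epsilon(y)\cap B(y,\delta)$ and $f_t\wuu_\tau(y)\subseteq \wu_\epsilon(y)\cap B(y,\delta)$ for $|t|\le\tau$. Hence $\Psi_y(\wuu_\tau(y)\times[-\tau,\tau])\subseteq \wu_\epsilon(y)\cap B(y,\delta)$. The restriction of a bi-H\"older homeomorphism to a subset is a bi-H\"older homeomorphism onto its image, and a product of bi-H\"older maps is bi-H\"older for the max metric. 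Since all the constants in the earlier results are uniform in $y$, the composition $\Phi_y$ is injective, continuous, and bi-H\"older onto its image $U(y,\tau)$ with constants uniform in $y$.

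The main obstacle is showing that $U(y,\tau)$ is a neighborhood of $y$ in $Y$. We plan to argue as follows. $A_y$ is a neighborhood of $y$ in $\wu_\epsilon(y)$, so it contains $\wu_\epsilon(y)\cap B(y,r)$ for some $r>0$; $\wss_\tau(y)$ similarly contains $\wss_\epsilon(y)\cap B(y,r')$ by the second group of inclusions in Lemma \ref{lem.subset}. Hence $U(y,\tau)\supseteq \ang{\wu_\epsilon(y)\cap B(y,r)}{\wss_\epsilon(y)\cap B(y,r')}$. This set is open in $Y$ and contains $y$: the bracket restricted to the sets with $r,r'$ in place of $\delta$ is the restriction of the homeomorphism of Corollary \ref{cor.lps} to an open subset of its domain, so its image is open in the open image set, hence open in $Y$. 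Alternatively, we can use the inverse formula of Lemma \ref{lem.ruelle}, $w\mapsto(\ang{w}{y},\ang{y}{w})$, together with continuity, to see that points $w$ near $y$ have both coordinates in the smaller sets. Uniformity of $r,r'$ in $y$ comes from compactness and the uniform statements of Lemma \ref{lem.subset}. This completes the plan.
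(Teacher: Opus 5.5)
Your proposal is correct and is essentially the paper's argument: the paper obtains Lemma \ref{lem.homeo} by combining Corollary \ref{cor.lps} with Corollary \ref{cor.halfhomeo}, writing $\Phi_y$ as the Bowen bracket composed with the flow map $\wuu_\tau(y)\times[-\tau,\tau]\to\wu_\epsilon(y)$, exactly as you do. Your domain matching via Lemma \ref{lem.subset} (take the same small $\epsilon$ in both corollaries, which is allowed since each holds for every sufficiently small $\epsilon$) and your check that $U(y,\tau)$ contains the open bracket image of smaller relative balls supply details the paper leaves implicit.
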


\begin{proof}[Proof of Theorem \ref{Conjugacy}]
	Suppose that $\FFF$ and $\FFF'$ are two $C^\alpha$ metric Anosov flows on compact metric spaces $(Y,d)$ and
	$(Y',d')$, respectively, and let $\psi : Y \to Y'$ be a topological conjugacy. We only have to
	show that $\psi$ is H\"older, since the H\"older continuity of $\psi^{-1}$ 
	follows by symmetry.

	Choose constants
	$\eta,\lambda,k > 0$, $C \geq 1$ so that they satisfy the inequalities
	\eqref{eq.HW}, \eqref{eq.HS}, and \eqref{eq.HF} for both flows $\FFF$ and $\FFF'$.
	Let $(\epsilon,\delta)$ be a scale for the canonical coordinates of $\FFF$, and
	suppose $\epsilon' > 0$ is small enough that, for any $x,y,z \in  Y'$, then 
	$x \in \wss_{\epsilon'}(y)\cap\wu_{\epsilon'}(z)$ implies $x = \ang{y}{z}$. 
	We assume that $\epsilon \leq \epsilon'$, and that
	$d(x,z) \leq \epsilon$ implies $d(\psi(x) ,\psi(z)) \leq \epsilon'$ for every $x, z \in Y$.

	Suppose that $y \in Y$, and let $y' = \psi(y)$. Using Lemma \ref{lem.homeo}, choose
	two scales $\tau, \tau' > 0$ so that
	\begin{enumerate}[label=(\alph*)]
		\item $\Phi_{y'}$ is a bi-H\"older homeomorphism onto $U(y',\tau')$,
		\item	$\Phi_y$ is a bi-H\"older homeomorphism onto $U(y,\tau)$,
		\item $d(x,z) \leq 2\tau$ implies $d(\psi(x),\psi(z)) \leq \eta$ for every $x,z \in Y$,
		\item $\psi(U(y,\tau))\subseteq U(y',\tau')$, and
		\item $d(f_tx,z) \leq \delta$ for every 
			$(x,t,z) \in \wuu_\tau(y)\times[-\tau,\tau]\times\wss_\tau(y)$.
	\end{enumerate}
	Suppose that $(x,t,z) \in \wuu_\tau(y)\times[-\tau,\tau]\times\wss_\tau(y)$. Then 
	$d(f_tx,z) \leq \delta$, and hence
	\[
		\Phi_y(x,t,z) = \ang{f_tx}{z}	\in \wss_\epsilon(f'_tx)\cap \wu_\epsilon(z).
	\]
	This implies that
	\[
		\psi(\Phi_y(x,t,z)) \in \wss_{\epsilon'}(f'_t\psi(x))\cap \wu_{\epsilon'}(\psi(z))
	,\]
	and so
	\[
		\psi(\Phi_y(x,t,z)) = \ang{f'_t\psi(x)}{\psi(z)} = \Phi_{y'}(\psi(x),t,\psi(z))
	.\]
	Since $\Phi_y^{-1}$ and $\Phi_{y'}$ are both H\"older, then H\"older continuity of
	$\psi$ on $U(y,\tau)$ follows immediately if we show that $\psi$ is H\"older continuous on
	$\wuu_\tau(y)$ and on $\wss_\tau(y)$.

	We show H\"older continuity of $\psi|_{\wuu_\tau(y)}$.  Suppose that $x,z \in \wuu_\tau(y)$. 
	
	 Let $L=L(2\tau)$ and $T= T(x,z, 2\tau) \geq 0$ be as in \eqref{def:L} and \eqref{def:T} respectively.
	By Lemma \ref{lem.logclose}, 
	\begin{equation} \label{eq.logclose2tau}
	d(f_tx,f_tz) \leq 2\tau \mbox{ for all } \abs{t} \leq T.
	\end{equation}
Then $f_Tx
	\in \wuu_{2\tau}(f_Tz)$, and therefore $f'_T\psi(x) \in \wuu_\eta(f'_T\psi(z))$. By \eqref{eq.HW} and \eqref{relateTandL},
	this implies that we have
	\[
		d(\psi(x),\psi(z)) \leq Ce^{-\lambda T}\eta^\alpha = C \eta^\alpha e^{\lambda L(2\tau)} d(x,z)^{\frac{\lambda\alpha}{k}}
	.\]
	The H\"older continuity of $\psi|_{\wss_\tau(y)}$ is proved analogously.
\end{proof}

The same arguments can be used in the setting of Theorem
	\ref{prop.realstatement}. We assume that the topological conjugacy $\psi$ is uniformly continuous to ensure that it
	preserves the stable and unstable sets of $\FFF$ and $\FFF'$ a priori,
	and so that $\psi$ can be written in terms of its coordinates with respect to $\Phi_y$, $\Phi_{y'}$.
	The arguments in this section then apply without any major modifications to give the following statement.
\begin{proposition}\label{rem.noncompactB}
	Let $\FFF : Y \times \R$ and $\FFF': Y' \times \R$ be continuous flows on metric spaces $(Y,d)$ and
	$(Y',d')$. Suppose that $\FFF$ and $\FFF'$ satisfy the hypotheses of Theorem
	\ref{prop.realstatement}, and suppose that $\psi : Y \to Y'$ is a topological
	conjugacy between $\FFF$ and $\FFF'$ such that $\psi$ and $\psi^{-1}$ are uniformly continuous. 
	Then $\psi$ is \ul{} continuous.
\end{proposition}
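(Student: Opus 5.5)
The plan is to follow the proof of Theorem~\ref{Conjugacy} step by step, replacing each compactness argument with uniform constants. The general Proposition~\ref{prop.realstatement} supplies these constants, together with uniform continuity of $\psi$ and $\psi^{-1}$.

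\textbf{Step 1: $\psi$ preserves the dynamical sets.} Since $\psi f_t=f'_t\psi$ and $\psi$ is uniformly continuous, $d(f_tx,f_ty)\to 0$ implies $d'(f'_t\psi x,f'_t\psi y)\to 0$. So $\psi$ maps $\wss(y)$ into $\wss(\psi y)$, and similarly for $\wuu$. Uniform continuity also gives local scales: for every $\epsilon'$ there is $\epsilon$ with $\psi(\wss_\epsilon(y))\subseteq \wss_{\epsilon'}(\psi y)$, with the same statement for $\wuu$, $\ws$, $\wu$, uniformly in $y$. Applying the same reasoning to $\psi^{-1}$ gives the reverse inclusions. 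With uniform canonical coordinates for $\FFF'$ at a fixed small scale, this yields $\psi\ang{x}{z}=\ang{\psi x}{\psi z}$ whenever $d(x,z)$ is below a uniform threshold.

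\textbf{Step 2: uniform local product charts.} I will redo Lemmas~\ref{lem.lpscc}--\ref{lem.homeo} and Corollaries~\ref{cor.lps}, \ref{cor.lpstau}, \ref{cor.halfhomeo} without compactness. Continuity of the brackets was previously obtained from uniform continuity on a compact set; here Proposition~\ref{prop.realstatement}, Corollary~\ref{cor.realstatementi} and Lemma~\ref{lem.reverse} give \ul{} continuity with uniform exponent and constant instead. Lemma~\ref{lem.subset} used uniform continuity of the flow on short time intervals, and \eqref{eq.HF} and \eqref{eq.HS} provide this directly.

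The bracket identities \ref{item.a}--\ref{item.c} are pointwise and need no compactness. The inverse map $x\mapsto(\ang{x}{y},\ang{y}{x})$ is \ul{} because the bracket is. Hence there is a uniform $\tau>0$ such that every chart $\Phi_y$ is bi-\ul{} with constants independent of $y$. The charts also contain balls $B(y,r)$ with $r$ independent of $y$; this uses Lemma~\ref{lem.subset} with uniform $\delta$ together with the \ul{} inverse.

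\textbf{Step 3: the estimate.} As in the proof of Theorem~\ref{Conjugacy}, uniform continuity of $\psi$ gives a uniform $\tau$ with $\psi(U(y,\tau))\subseteq U(\psi y,\tau')$ and $\psi\circ\Phi_y=\Phi_{\psi y}\circ(\psi\times\mathrm{id}\times\psi)$. The estimate on $\wuu_\tau(y)$ is then the same one: take $T=T(x,z;2\tau)$, and use Lemma~\ref{lem.logclose} to get $f_Tx\in\wuu_{2\tau}(f_Tz)$. By Step 1 this gives $f'_T\psi x\in\wuu_\eta(f'_T\psi z)$, and then \eqref{eq.HW} together with \eqref{relateTandL} gives
\[
d'(\psi x,\psi z)\le C\eta^\alpha e^{\lambda L(2\tau)}d(x,z)^{\lambda\alpha/k}.
\]
The case of $\wss_\tau(y)$ is symmetric. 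Composing with the uniformly \ul{} maps $\Phi_y^{-1}$ and $\Phi_{\psi y}$ gives a uniform \ul{} bound for $\psi$ on $B(y,r)$. Since $r$ is independent of $y$, this is exactly \ul{} continuity of $\psi$.

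\textbf{Main obstacle.} I expect Step 2 to be the hardest part, specifically the uniformity in $y$. I need that $U(y,\tau)$ contains a ball of radius independent of $y$, and that the local sets coincide uniformly with the bracket fibers, which is Lemma~\ref{lem.subset}. Without compactness, the natural route is to deduce both from the uniform \ul{} continuity of the brackets and the uniform canonical coordinates, rather than from Ruelle's topological Lemma~\ref{lem.ruelle}.
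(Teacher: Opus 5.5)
Your proposal is correct and follows the paper's route. The paper also reruns the proof of Theorem \ref{Conjugacy}: uniform continuity of $\psi$ preserves the local strong stable and unstable sets and gives $\psi=\Phi_{\psi y}\circ(\psi\times\mathrm{id}\times\psi)\circ\Phi_y^{-1}$, and $\psi$ is then estimated along $\wuu_\tau(y)$ and $\wss_\tau(y)$ via Lemma \ref{lem.logclose} and \eqref{eq.HW}. Your Step 2 gets uniform-size chart domains from the \ul{} inverse $w\mapsto(\ang{w}{y},\ang{y}{w})$ and Lemma \ref{lem.subset} rather than Ruelle's Lemma \ref{lem.ruelle}, which spells out the uniformity the paper leaves implicit when it says the arguments apply ``without any major modifications.''
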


\section{Strong Markov coding} \label{s.coding}
In this section, we prove Theorem \ref{Coding}. We accomplish this by verifying the hypothesis of Constantine, Lafont and Thompson from \cite{CLT19} which ensure that there is strong Markov coding. We set up the preliminaries from \cite{CLT19}. These were set up for a metric Anosov flow (in the sense of Pollicott).  We postpone the definition of a metric Anosov flow in the sense of Pollicott until \S \ref{subsec.otherdefinitions} where we will check that every Lipschitz metric Anosov flow is metric Anosov in the sense of Pollicott. Thus, the definitions and results here apply unproblematically using Definition \ref{def.Calpha} with $\alpha=1$.
For a set $D$, and interval $I$, we write
\[ f_I D = \{f_t x : x \in D, \ t \in I\}. \]

A flow $(f_t)$ is \emph{H\"older continuous} if the map from $X \times [0,1] \to X$ given by $(x,t) \to f_t(x)$ is H\"older continuous. It follows that every time$-t$ map is H\"older continuous, and the map $t \to f_t(x)$ is H\"older continuous for each $x\in X$. A flow satisfying \eqref{eq.HF} and the upper inequality in \eqref{eq.HS} is H\"older continuous in this sense.

\begin{definition}\label{defn:section}
	For a continuous flow $( f_t )$ on a metric space $(X,d)$, a \emph{section} is a closed subset $D\subset X$ and a $\xi>0$ so that the map $(z,t) \mapsto f_tz$ is a homeomorphism between $D \times [-\xi, \xi] \to X$ and $f_{[-\xi,\xi]}D$.
\end{definition}

For a section $D \subset X$,  we write $\Int_{\mathcal O} D$ for the interior of $D$ \emph{transverse to the flow}; that is,
\[\Int_\mathcal O D = D \cap \bigcap_{\epsilon>0}(f_{(-\epsilon,\epsilon)}D)^\circ\]
where $Y^\circ$ denotes the interior of $Y$ with respect to the topology of $X$.

For any section $D$, there is a well-defined projection map $\Proj_D: f_{[-\xi,\xi]}D \to D$ defined by $\Proj_D(f_tz) =z$. By definition, the domain of this map contains a nonempty open neighborhood of $X$. 
We recall Bowen's notion of a proper family of sections and a Markov proper family from \cite{rB73}.

\begin{definition}\label{defn:proper}
	Let $\BBB=\{B_1, \ldots, B_n\}$, and $\DDD=\{D_1, \ldots, D_n\}$ be collections of sections. We say that $(\BBB , \DDD)$ is a \emph{proper family at scale $\alpha>0$} if $\{ (B_i, D_i) : i = 1, 2, \ldots, n \}$ satisfies the following properties:
	\begin{enumerate}
		\item $\diam(D_i)< \alpha$ and $B_i \subset D_i$ for each $i \in \{1, 2, \ldots, n\}$;
		\item $\bigcup_{i=1}^nf_{(-\alpha, 0)} (\Int_\mathcal O B_i)=Y$;
		\item For all $i\neq j$, if $f_{[0, 4\alpha]}(D_i) \cap D_j \neq \emptyset$, then $f_{[-4\alpha, 0]}(D_i) \cap D_j = \emptyset$.
	\end{enumerate}
\end{definition}

Condition (3) implies that the sets $D_i$ are pairwise disjoint, and the condition is symmetric under reversal of time; that is, it follows that if $f_{[-4\alpha, 0]}(D_i) \cap D_j \neq \emptyset$, then $f_{[0, 4\alpha]}(D_i) \cap D_j = \emptyset$. We now define a special class of proper families, which we call \emph{pre-Markov}.

\begin{definition}\label{def:rectangle}
	For a metric Anosov flow, a \emph{rectangle} $R$ in a section $D$ is a subset $R \subseteq \Int_\mathcal O D$ such that for all $x,y \in R$, $\Proj_{D} \langle x,y\rangle  \in R$.
\end{definition}

\begin{definition}\label{defn:pollicott}
	Let $(\BBB, \DDD)$ be a proper family at scale $\alpha>0$. We say that $(\BBB, \DDD)$ is \emph{pre-Markov} if the sets $B_i$ are closed rectangles and we have the following property:
	\begin{equation} \label{premarkov}
		\mbox{If }B_i \cap f_{[-2 \alpha, 2 \alpha]}B_j \neq \emptyset, \mbox{ then } B_i \subset f_{[-3\alpha, 3\alpha]} D_j.
	\end{equation}
\end{definition}

	 A Markov proper family is a proper family where the sections are rectangles, and with a property which can be informally stated as `different forward $\RR$-transition implies different future, and different backward $\RR$-transitions implies different past.'  More precisely, a proper family $(\RR, \SSS)$ is \emph{Markov} if the sets $R_i$ are rectangles, and we have the following Markov property:  let $H$ denote the Poincar\'e return map for $\bigcup_i R_i$ with respect to the flow $\{f_t$\}. Then if $ x \in R_i$ and $ H(x) \in R_j$, and $z \in R_i$ and $H(z) \notin R_j$, then $z \notin V^{ss}_{\diam R_i}(x)$. Similarly, if  $ x \in R_i$ and $ H^{-1}(x) \in R_j$, and $z \in R_i$ and $H^{-1}(z) \notin R_j$, then $z \notin V^u_{\diam R_i}(x)$. The existence of a Markov proper family is a key step in the construction of symbolic dynamics. In \cite[Lemma 3.9]{CLT19}, it is proved  (using arguments from \cite{rB73}) that if one has a pre-Markov proper family, there exists a Markov proper family. The Markov proper family is constructed out of small pieces (flowed by a small time)  of the sections in the pre-Markov family. This reduces the task of finding a Markov proper family, which is a main ingredient in the symbolic dynamics construction, to the much easier task of finding a pre-Markov proper family.

Given a collection of sections $\RR$, let $H: \bigcup_{i=1}^N R_i \to \bigcup_{i=1}^N R_i$ be the Poincar\'e (return) map, and let $r: \bigcup_{i=1}^N R_i \to (0, \infty)$ be the return time function, which are well defined in our setting.

\begin{definition}
	For a Markov proper family $(\RR, \SSS)$ for a metric Anosov flow, we define the coding space to be
	\[ \Sigma=\Sigma(\RR)= \left \{ \underline x \in \prod_{-\infty}^{\infty}\{1, 2, \ldots, N\} \mid \text{ for all } l, k \geq 0, \bigcap_{j=-k}^l H^{-j}( \Int_\mathcal O  R_{x_j}) \neq \emptyset \right \}. \]
\end{definition}

In \S2.3 of \cite{Pollicott}, the symbolic space $\Sigma(\RR)$ is shown to be a shift of finite type. There is a canonically defined map $\pi: \Sigma(\RR) \to \bigcup_i R_i$ given by $\pi(\underline x) = \bigcap_{j=-\infty}^\infty H^{-j}(R_{x_j})$. Let $\rho= r \circ \pi: \Sigma \to (0, \infty)$ and let $\Sigma^\rho=\Sigma^\rho(\RR)$ be the suspension flow over $\Sigma$ with roof function $\rho$. We extend $\pi$ to $\Sigma^\rho$ by $\hat \pi(\underline x, t) = f_t(\pi(\underline x))$. Pollicott shows the following.

\begin{theorem} \emph{(}\cite[Theorem 1]{Pollicott}\emph{)}
	If $(f_t)$ is a metric Anosov flow on $Y$, and $(\RR, \SSS)$ is a Markov proper family, then $\Sigma(\RR)$ is a shift of finite type and the map $\hat\pi: \Sigma^\rho \to Y$ is finite-to-one, continuous, surjective, injective on a residual set,  and satisfies  $\hat \pi \circ s_t = f_t \circ \hat \pi$, where $(s_t)$ is the suspension flow on $\Sigma^\rho$.
\end{theorem}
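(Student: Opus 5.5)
This is Pollicott's theorem, cited from \cite[Theorem 1]{Pollicott}. The plan is to follow Bowen's construction for Axiom A flows \cite{rB73}, as adapted by Pollicott, using only the bracket axioms and the expansivity they provide.

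\textbf{Step 1: $\Sigma(\RR)$ is a shift of finite type.} Because each $R_i$ is a rectangle, the Markov property makes admissibility of a sequence a purely local condition. Concretely, if $\Int_{\mathcal O}R_{x_j}\cap H^{-1}\Int_{\mathcal O}R_{x_{j+1}}\neq\emptyset$ for every $j$, then the finite intersections $\bigcap_{j=-k}^{l}H^{-j}\Int_{\mathcal O}R_{x_j}$ are nonempty. To prove this, I would induct on $k+l$. At each step I would use the bracket, followed by projection to the section, to splice a point with the right future onto a point with the right past. The Markov property guarantees that the spliced point $\Proj\ang{x}{y}$ keeps the forward transitions of $x$ and the backward transitions of $y$. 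It follows that $\Sigma$ is the shift of finite type with transition matrix $A_{ij}=1$ exactly when $\Int_{\mathcal O}R_i\cap H^{-1}\Int_{\mathcal O}R_j\neq\emptyset$.

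\textbf{Step 2: $\pi$ is well defined and continuous.} For $\underline x\in\Sigma$, the nested closed sets $\bigcap_{|j|\le n}H^{-j}\overline{R_{x_j}}$ are nonempty by compactness. Their diameters shrink to $0$ because any two points in the intersection shadow each other's orbit along all return times up to $\pm n$, at a scale below the expansivity constant. Expansivity of metric Anosov flows, together with canonical coordinates, then forces the two points to lie on a common short orbit segment. Since both are in the same section, which is transverse with $\xi$-uniqueness, they coincide. Uniformity of this diameter decay in $n$ gives continuity of $\pi$, and $\rho=r\circ\pi$ is continuous. The conjugacy $\hat\pi\circ s_t=f_t\circ\hat\pi$ then holds by construction, since $H$ corresponds to the shift.

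\textbf{Step 3: surjectivity, injectivity on a residual set, finite-to-one.} For surjectivity, property (2) of a proper family gives that every orbit hits $\bigcup R_i$, with return times bounded above and below. Points whose full orbit avoids $\bigcup_i\partial_{\mathcal O}R_i$ have a unique itinerary, and approximation handles the boundary points. For injectivity, the set of points whose orbit never meets the transverse boundaries is a countable intersection of open dense sets. This uses that rectangle boundaries $\partial^sR\cup\partial^uR$ are nowhere dense, and I would obtain that from the local product structure of Lemma \ref{lem.ruelle}. On this residual set the itinerary is unique, so $\hat\pi$ is injective there.

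The hardest step is \emph{finite-to-one}. Here I would follow Bowen's argument: if $N^2+1$ distinct codes mapped to one point, then two of them would agree at a symbol in both past and future tails. The Markov property would then force the codes to be equal, giving a bound of $N^2$ preimages.
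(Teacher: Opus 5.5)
The paper gives no proof of this statement; it quotes it from Pollicott, who adapts Bowen's construction, so your outline follows the intended route. Your Step 1 and the surjectivity and residual-set arguments are the standard ones and are fine. The trouble is that Steps 2 and 3 reuse diffeomorphism arguments that break for flows, for one reason: $H$ is discontinuous, and an orbit can graze the transverse boundary of a rectangle between two consecutive symbols of a code. For instance, suppose $p\in\Int_{\mathcal O}R_{x_0}$ lies on the stable fibre separating points whose next visit is to $\Int_{\mathcal O}R_c$ from points that go straight to $\Int_{\mathcal O}R_{x_1}$, and suppose the orbits through $\Int_{\mathcal O}R_c$ near $H(p)$ continue to $\Int_{\mathcal O}R_{x_1}$. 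Then both $\cdots x_0\,x_1\,x_2\cdots$ and $\cdots x_0\,c\,x_1\,x_2\cdots$ lie in $\Sigma$ and code the orbit of $p$, while the literal first return $H(p)$ lies in $\partial_{\mathcal O}R_c$. So $H\circ\pi=\pi\circ\sigma$ fails there, $H^{-1}(R_{x_1})$ is not closed, and $\bigcap_jH^{-j}(R_{x_j})$ can be empty. You need to define $\pi(\underline x)$ as the point of $\bigcap_n\overline{\bigcap_{|j|\le n}H^{-j}\Int_{\mathcal O}R_{x_j}}$; these sets are nonempty by the definition of $\Sigma$, and compactness handles the infinite intersection. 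You also need $\rho(\underline x)$ to be the transition time, defined by $f_{\rho(\underline x)}\pi(\underline x)=\pi(\sigma\underline x)$. This $\rho$ is continuous and equals $r(\pi(\underline x))$ only off the boundary set. It is this, not ``$H$ corresponds to the shift'', that makes $\hat\pi$ well defined on $\Sigma^\rho$ and equivariant.

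The same phenomenon is a genuine gap in your finite-to-one step. Codes of one orbit can be shifted against each other by inserted symbols, so the $n$th symbols of two preimages need not refer to the same point of the orbit. Index-aligned agreement at $\pm n$ therefore forces nothing in between. In the example, $x_{-n}=y_{-n}$ always, and $x_n=y_n$ as soon as $x_n=x_{n-1}$, yet the codes differ at index $1$. The pigeonhole must be made in time instead. Visits of an orbit to a single section are $\xi$-separated, so $f_{[-T-\bar r,-T]}p$ and $f_{[T,T+\bar r]}p$ (with $\bar r$ a bound on return times) meet $\bigcup_iR_i$ in at most $M$ points. Among more than $M^2$ preimages, two then pass through the same points $q_-\in R_a$ and $q_+\in R_b$.

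Even then, ``the Markov property forces equality'' must be replaced by a no-diamond argument. By the Markov property, the points of $R_a$ following a given code up to $R_b$ form a closed subrectangle saturated by stable fibres of $R_a$. Its image under the flow holonomy $D_a\to D_b$ near $q_-$, which is the same injective map for both codes, contains the whole unstable fibre of $R_b$ through $q_+$. Hence the two subrectangles share a piece of unstable fibre through $q_-$ with nonempty relative interior. Being stable-saturated, they then share an open subset of $D_a$. This contradicts the fact that distinct codes cut out subrectangles with disjoint interiors, which uses that the rectangles are closures of their interiors. Finally, since there are finitely many pairs, one pair agrees on $[-T,T]$ for arbitrarily large $T$, so the two preimages are the same point of $\Sigma^\rho$.
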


We say that a flow has a \emph{strong Markov coding} if the conclusions of the previous theorem are true with the additional hypothesis that the roof function $\rho$ is H\"older and that the map $\hat \pi$ is H\"older. We have the following criterion for the existence of a strong Markov coding.
\begin{theorem} [\cite{CLT19} Theorem A] \label{t.usefulversion}
	Let $(f_t)$ be a H\"older continuous metric Anosov flow without fixed points. Suppose that there exists a pre-Markov proper family $(\BBB, \DDD)$  satisfying:
	\begin{enumerate}
		\item the return time function $r(y)$ for $\BBB$ is H\"older where it is continuous (i.e., on each $B_i \cap H^{-1}(B_j)$ where $H$ is the Poincar\'e first-return map);

		\item the projection maps along the flow $\Proj_{B_{i}}: B_i \times [-\alpha, \alpha] \to B_i$ are H\"older, where $\alpha>0$ is the scale for the pre-Markov proper family. 
	\end{enumerate}
	Then the flow has a strong Markov coding. 
\end{theorem}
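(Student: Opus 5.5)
The statement is \cite[Theorem A]{CLT19}, and we follow the architecture of its proof. In outline: first upgrade the pre-Markov family $(\BBB,\DDD)$ to a Markov proper family $(\RR,\SSS)$ whose sections inherit the H\"older properties (1) and (2). Then apply Pollicott's theorem to get the shift of finite type $\Sigma(\RR)$ and the semiconjugacy $\hat\pi$. Finally, H\"older continuity of $\rho=r\circ\pi$ and of $\hat\pi$ reduces to one estimate: points in a common long cylinder have exponentially close $\pi$-images.

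\emph{Step 1 (refinement).} We invoke \cite[Lemma 3.9]{CLT19}, which follows Bowen \cite{rB73}. Each $R_i\in\RR$ is obtained by flowing a closed subrectangle of some $B_j$ by a small time and projecting. Concretely, $R_i=\Proj_{S_i}(P)$ with $P\subset B_j$, and $S_i$ is a small flow-translate of a piece of $D_j$. By the pre-Markov condition \eqref{premarkov}, the projection $\Proj_{R_i}$ is a composition of $\Proj_{B_j}$ with flow maps $f_s$ for $|s|\le 3\alpha$. These are H\"older by hypothesis (2) and the H\"older continuity of the flow, so hypothesis (2) passes to $\RR$. The return time of $\RR$ on $R_i\cap H^{-1}(R_j)$ equals a return time of $\BBB$ (or a bounded sum of such) corrected by the time offsets $t$ in $\Proj(f_tz)=z$. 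These offsets are H\"older because, by the lower bound in \eqref{eq.HS}, $|t|\le C\,d(f_tz,z)^{\alpha}$. Hence hypothesis (1) passes to $\RR$ as well.

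\emph{Step 2 (exponential closeness on cylinders).} Suppose $\underline x,\underline y\in\Sigma$ agree in coordinates $-n,\dots,n$, and set $p=\pi(\underline x)$, $q=\pi(\underline y)$. Then $H^jp$ and $H^jq$ lie in a common $R_{x_j}$ for $|j|\le n$, so the orbits of $p$ and $q$ stay within $\diam$-scale of each other (up to a bounded reparametrization) for times $|t|\lesssim n\cdot\min r$. Let $w=\Proj_{R_{x_0}}\ang{p}{q}$, which lies in $R_{x_0}$ by the rectangle property. Forward in time, $w$ shadows $p$ along a strong stable set, so \eqref{eq.HW} gives a bound on $d(p,w)$ of order $e^{-\lambda c n}$. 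Symmetrically, \eqref{eq.HW} applied to the unstable set backward in time bounds $d(w,q)$ by the same order, where Proposition~\ref{lem.2ndvar}, Proposition~\ref{lem.1stvar} and hypothesis (2) control the projection and bracket. Therefore $d(p,q)\le K\theta^n$ for some $\theta<1$.

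\emph{Step 3 (conclusion).} With the metric $d_\theta$ on $\Sigma$, Step 2 makes $\pi$ Lipschitz into $Y$. Composing with the H\"older return time on each cylinder $[x_0x_1]$ from Step 1 shows that $\rho$ is H\"older. Since the flow is H\"older in $(x,t)$, the map $\hat\pi(\underline x,t)=f_t\pi(\underline x)$ is H\"older on $\Sigma^\rho$ with its Bowen--Walters metric. The remaining conclusions are Pollicott's theorem.

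The main obstacle is Step 2, since the abstract metric Anosov hypothesis gives only expansivity, not rates. The rate has to come from the contraction estimates \eqref{eq.HW} combined with uniform control of the bracket and projections at a fixed scale. If necessary, this is arranged by applying Lemma~\ref{lem.logclose}-type bookkeeping to the times between returns, which are bounded below uniformly.
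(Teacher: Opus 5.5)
The paper gives no proof of this statement. It is quoted from \cite{CLT19}, whose standing hypotheses are only Pollicott's definition and H\"older continuity of $(x,t)\mapsto f_tx$ on $Y\times[0,1]$. Your architecture is the right one: refine to a Markov family, apply Pollicott, show that $\pi$ shrinks cylinders exponentially, then deduce regularity of $\rho$ and $\hat\pi$.

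\textbf{Gap 1: inputs that are not hypotheses.}
\begin{itemize}
\item In Step 1 you make the time offsets H\"older using the lower bound in \eqref{eq.HS}. The remark after Proposition~\ref{prop.holderreturn} says explicitly that this bound is not assumed in \cite{CLT19}, which is exactly why (1) is imposed separately.
\item In Step 2 you appeal to Propositions~\ref{lem.2ndvar} and \ref{lem.1stvar} and to Lemma~\ref{lem.logclose}. These need \eqref{eq.HF} with a uniform exponent, together with \eqref{eq.HS}. H\"older continuity on $Y\times[0,1]$ only yields exponents that degrade under iteration. A H\"older bracket is not known for general metric Anosov flows; it is the main new result of this paper.
\end{itemize}
So as written you have proved the theorem only for $C^\alpha$ metric Anosov flows.

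Neither input is needed. As recalled in \S\ref{s.coding}, the Markov family of \cite[Lemma 3.9]{CLT19} is built from small pieces of the $B_i$ flowed by small times, so $R_k=f_{s_k}(P_k)$ with $P_k\subset B_{i(k)}$ and $s_k$ a constant. Hence
\begin{itemize}
\item $\Proj_{R_k}=f_{s_k}\circ\Proj_{B_{i(k)}}\circ f_{-s_k}$, and
\item $r_{\RR}=r_{\BBB}\circ f_{-s_k}+s_{k'}-s_k$ on the relevant pieces,
\end{itemize}
so (1) and (2) transfer with no speed estimate.

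Also, contrary to your closing paragraph, Pollicott's definition does contain rates. It includes exponential contraction along strong stable and strong unstable plaques (see \S\ref{subsec.otherdefinitions}). That contraction, together with canonical coordinates and hypothesis (2), is all Step 2 requires.

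\textbf{Gap 2: Step 2 runs in the wrong time direction.} Take $w'=\ang{p}{q}\in\wss_\epsilon(p)$. Forward shadowing of $p$ by $w'$ holds for every point of $\wss_\epsilon(p)$ and carries no dependence on $n$, so it cannot bound $d(p,w')$ by $e^{-\lambda cn}$. The $n$-dependence has to come from the past.
\begin{itemize}
\item Since $w'\in\wuu_\epsilon(f_vq)$, the orbit $f_{-t}w'$ follows $f_{-t+v}q$.
\item That orbit follows the backward orbit of $p$, because $\underline x$ and $\underline y$ agree in coordinates $-n,\dots,0$.
\item Then apply the stable contraction from time $-T$ to $0$, with $T\asymp n\min r$, and use (2) to pass from $w'$ to $w=\Proj_{R_{x_0}}w'$.
\end{itemize}
The bound on $d(w,q)$ is the mirror image: forward shadowing on $[0,T]$ followed by backward contraction along $\wuu$.

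You must also show the shadowing is synchronized. Agreement of itineraries only makes the orbits close up to a reparametrization, and return-time discrepancies could a priori accumulate over $n$ returns. You need $f_{-t}w'$ to stay close to $f_{-t}p$ itself, not merely to some $f_{-\sigma(t)}p$. This follows from a continuity argument using two facts:
\begin{itemize}
\item $w'\in\wss(p)$ has zero time offset relative to $p$;
\item there is a uniform bound $d(y,f_sy)\ge r>0$ for $s$ in a compact interval away from $0$. This comes from compactness and the absence of fixed points, as in the proof in \S\ref{subsec.otherdefinitions}, not from \eqref{eq.HS}.
\end{itemize}
With these corrections your outline goes through.
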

Since the space is compact and the symbolic coding is given by a shift of finite type, transitivity of the flow implies that the strong Markov coding can be chosen to be transitive by restricting to a single irreducible component if necessary. Since metric Anosov flows are expansive, any fixed points are isolated and can thus be removed.

To prove Theorem \ref{Coding}, we show how to verify the hypotheses of Theorem \ref{t.usefulversion} in our setting. Let $(Y,d)$ be a compact metric space, and let $\FFF : Y \to Y$ be a $C^\alpha$ metric Anosov flow
	on $Y$.  We define `standard' rectangles as follows. For each $y \in Y$ and $\tau > 0$ sufficiently small we let
		\begin{equation} \label{standardrec}
			R(y,\tau) \coloneqq \ang{\wuu_\tau(y)}{\wss_\tau(y)}.
		\end{equation}
		 Wecall the homeomorphisms 
	\[
		\Phi_y : \wuu_{\tau}(y)\times [-\tau,\tau] \times \wss_{\tau}(y)  \to U(y,\tau),
		\quad  \Phi_y(x,t,z) = \ang{f_tx}{z} 
	\]
	onto neighborhoods $U(y,\tau)$ that were described in Lemma \ref{lem.homeo}.
	Assuming that $\tau$ is small enough, it is easy to show from the
	definitions that $\Phi_y$ is equivalently defined by $\Phi_y(x,t,z) = f_t\ang{x}{z}$.
\begin{lemma}
For each $y \in Y$ and $\tau > 0$ sufficiently small, then $R(y,\tau)$ is rectangle.
\end{lemma}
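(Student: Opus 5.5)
To prove that $R(y,\tau)$ is a rectangle, I need to specify the section $D$ in which it sits and then check the two conditions in Definition \ref{def:rectangle}. For $D$ I take a slightly larger standard set $D = R(y,\tau')$ with $\tau < \tau'$ small. Lemma \ref{lem.homeo} shows that $\Phi_y$ restricted to $t=0$ is a bi-H\"older homeomorphism onto $D$. Composing with the flow, $(w,t)\mapsto f_t w$ on $D\times[-\tau',\tau']$ equals $\Phi_y$ after reordering coordinates, because $\Phi_y(x,t,z)=f_t\ang{x}{z}$. So $D$ is a section with $\xi=\tau'$. Moreover, $\Proj_D$ on $U(y,\tau')$ is just $\Phi_y(x,t,z)\mapsto \Phi_y(x,0,z)$.

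First I show $R(y,\tau)\subseteq \Int_{\mathcal O} D$. For every $\epsilon\in(0,\tau')$, the set $f_{(-\epsilon,\epsilon)}D$ contains $\Phi_y(\Int \wuu_{\tau'}(y) \times(-\epsilon,\epsilon)\times \Int \wss_{\tau'}(y))$, where the interiors are taken relative to the local leaves. This set is open in $Y$ because $\Phi_y$ is a homeomorphism onto the open neighborhood $U(y,\tau')$, which comes from Corollary \ref{cor.lps}. Now take a point $\ang{x}{z}$ with $x\in\wuu_\tau(y)$ and $z\in\wss_\tau(y)$. By the second part of Lemma \ref{lem.subset}, $x$ lies in the relative interior of $\wuu_{\tau'}(y)$, since a small ball around $x$ within the leaf stays inside $\wuu_{\tau'}(y)$. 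The same holds for $z$ in $\wss_{\tau'}(y)$. Hence $\ang{x}{z}$ lies in the required open set, and this gives the first condition.

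Next, for $p=\ang{x_1}{z_1}$ and $q=\ang{x_2}{z_2}$ in $R(y,\tau)$, I have to show $\Proj_D\ang{p}{q}\in R(y,\tau)$. The target identity is $\ang{p}{q}=f_s\ang{x_1}{z_2}$ for some small $s$, which immediately gives $\Proj_D\ang{p}{q}=\ang{x_1}{z_2}\in R(y,\tau)$. I would derive it from the bracket axioms of Lemma \ref{lem.lps} together with the definitions. We have $p\in\wss_\epsilon(x_1)$, and $q$ lies in the weak unstable set of $z_2$. The weak unstable set of $z_2$ coincides with that of $y$, and this in turn contains both $x_1$ and $p$'s weak-unstable companion, up to flowing.

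Concretely, $\ang{p}{q}\in\wss(p)\cap\wu(q)$. Since $p\in\wss(\ang{x_1}{y'})$ type relations hold, axiom (b), $[[x,y],z]=[x,z]$, reduces $\ang{\ang{x_1}{z_1}}{q}$ to $\ang{x_1}{q}$. Axiom (c), $[x,[y,z]]=[x,z]$, then reduces $\ang{x_1}{\ang{x_2}{z_2}}$ to $\ang{x_1}{z_2}$. This would give $\ang{p}{q}=\ang{x_1}{z_2}$ exactly, so $s=0$ suffices. Each step requires all points involved to lie within the domain scale $\delta$ of Lemma \ref{lem.lps}, and I arrange this by taking $\tau,\tau'$ small using the first part of Lemma \ref{lem.subset} and uniform continuity of the bracket.

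The main obstacle is the bookkeeping of scales. The bracket identities must be applied where both sides are defined, and Lemma \ref{lem.subset} is needed to guarantee that $\ang{x_1}{z_2}$ lands in $R(y,\tau)$ rather than just in $R(y,\tau')$. That last point is automatic here, since $x_1\in\wuu_\tau(y)$ and $z_2\in\wss_\tau(y)$.
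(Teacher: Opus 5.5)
Your central step is the paper's proof. The paper notes that $R(y,\tau)=\Phi_y(\wuu_\tau(y)\times\{0\}\times\wss_\tau(y))$ is a section. It then uses axioms (b) and (c) of the local bracket to get $\ang{\ang{x_1}{z_1}}{\ang{x_2}{z_2}}=\ang{x_1}{z_2}$, that is, $\ang{R(y,\tau)}{R(y,\tau)}=R(y,\tau)$, exactly as you do.

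What you add is an ambient section $D=R(y,\tau')$ with $\tau<\tau'$, together with a check that $R(y,\tau)\subseteq\Int_{\mathcal O}D$. The paper's one-line proof never addresses this requirement of Definition~\ref{def:rectangle}. If one takes $D=R(y,\tau)$ itself, the inclusion need not hold at points coming from the relative boundaries of $\wuu_\tau(y)$ and $\wss_\tau(y)$. So your version is closer to the definition, and to the later use of standard rectangles as nested pairs $B_i\subset D_i$.

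That extra step needs two repairs, neither of which changes the conclusion.

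First, $U(y,\tau')$ is not open. It is the image of the compact set $\wuu_{\tau'}(y)\times[-\tau',\tau']\times\wss_{\tau'}(y)$, and Lemma~\ref{lem.homeo} only asserts that it is a neighborhood of $y$. A homeomorphism onto a compact neighborhood does not by itself send relatively open sets to open sets. You should instead get openness from the open-image statement in Corollary~\ref{cor.lps} (which comes from Lemma~\ref{lem.ruelle}), with $\tau'$ small relative to the scales there. This must be combined with relative openness of $f_{(-\epsilon,\epsilon)}\Int\wuu_{\tau'}(y)$ in the local weak unstable set of $y$. That fact is not stated in Lemma~\ref{lem.halfhomeo}, but it follows from the inverse map $w\mapsto(\ang{w}{y}^\iota,\ang{y}{w}^\iota)$ of Lemma~\ref{lem.ruelle} for the reverse bracket. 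The point is that $\ang{w}{y}^\iota$ lies on the local orbit of $y$ whenever $w$ lies in the local weak unstable set of $y$.

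Second, your heuristic remark that the weak unstable set of $z_2$ coincides with that of $y$ is false, since $z_2\in\wss_\tau(y)$. Your actual derivation via axioms (b) and (c) does not use it, so you can simply drop it.
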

\begin{proof}
	From the formula $\Phi_y(x,t,z) = f_t\ang{x}{z}$, it
	immediately follows that 
	\[R(y,\tau) = \Phi_y(\wuu_\tau(y) \times \{0\}\times \wss_\tau(y))\]
	is a continuous section of the flow, such that $f_{[-\tau,\tau]}R(y,\tau) = U(y,\tau)$.

	We see that $R(y,\tau)$ is a rectangle for small enough $\tau$, since the
	Bowen bracket satisfies the axioms of an abstract local bracket operation, and
	hence
	\begin{align*}
		\ang{R(y,\tau)}{R(y,\tau)}
		&= \ang{\ang{\wuu_\tau(y)}{\wss_\tau(y)}}{\ang{\wuu_\tau(y)}{\wss_\tau(y)}} \\
		&= \ang{\wuu_\tau(y)}{\wss_\tau(y)} = R(y,\tau). \qedhere
	\end{align*} 
	\end{proof}
	\begin{lemma} \label{lem.proj}
		Let $R = R(y,\tau)$ be a standard rectangle as in \eqref{standardrec}. Then the projection mapping
		$\mathrm{Proj}_{R} : f_{[-\tau,\tau]}R \to R$ is H\"older continuous.
	\end{lemma}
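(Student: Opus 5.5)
The plan is to read the projection through the coordinate chart $\Phi_y$ of Lemma \ref{lem.homeo}. For $\tau$ small we have the alternative description $\Phi_y(x,t,z) = f_t\ang{x}{z}$. We also have $R = \Phi_y(\wuu_\tau(y)\times\{0\}\times\wss_\tau(y))$ and $f_{[-\tau,\tau]}R = U(y,\tau)$. So every $w \in f_{[-\tau,\tau]}R$ can be written uniquely as $w = f_t\ang{x}{z}$ with $(x,t,z) = \Phi_y^{-1}(w)$. Consequently
\[
	\mathrm{Proj}_R(w) = \ang{x}{z} = \Phi_y(x,0,z), \qquad\text{that is,}\qquad \mathrm{Proj}_R = \Phi_y\circ P\circ \Phi_y^{-1},
\]
where $P(x,t,z) = (x,0,z)$ is the coordinate projection on $\wuu_\tau(y)\times[-\tau,\tau]\times\wss_\tau(y)$. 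The map $P$ is $1$-Lipschitz for the max metric on the product.

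The steps, in order, are as follows. First, I verify the identity $\Phi_y(x,t,z) = f_t\ang{x}{z}$, which the paper notes is easy for small $\tau$. It follows from the bracket axioms and from $f_t$ mapping $\wss_\epsilon(x)$ into $\wss$ of $f_tx$ and $\wu$ leaves into themselves, together with uniqueness in the canonical coordinates. Second, I deduce the formula $\mathrm{Proj}_R = \Phi_y\circ P\circ\Phi_y^{-1}$, using that $\Phi_y$ is a bijection onto $U(y,\tau)$ and that the section representation $w = f_t r$, $r\in R$, $|t|\le\tau$, is unique. Uniqueness holds because $R$ is a section, by Definition \ref{defn:section}, at scale $\tau$. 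Third, I invoke Lemma \ref{lem.homeo}: $\Phi_y$ and $\Phi_y^{-1}$ are H\"older. A composition of H\"older maps is H\"older, so $\mathrm{Proj}_R$ is H\"older continuous.

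The main obstacle is making sure that the H\"older constants and exponents in Lemma \ref{lem.homeo} are uniform, and that the $\Phi_y^{-1}$ direction really is H\"older. That direction is built from $w\mapsto \ang{w}{y}^\iota$-type coordinates and $w \mapsto \ang{y}{w}$ via Corollary \ref{cor.lpstau} and Corollary \ref{cor.halfhomeo}. Extracting the time coordinate $t$ uses the lower bound in \eqref{eq.HS}.

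If one prefers to avoid the full inverse chart, there is a direct route. Write $w = f_t r$. Then $\mathrm{Proj}_R(w) = f_{-t(w)}w$, and by \eqref{eq.HF} and \eqref{eq.HS} it suffices to show that $w\mapsto t(w)$ is H\"older. Here $t(w)$ is recovered from the flow coordinate of $\ang{w}{y}$-type brackets along the orbit, using the H\"older bracket from Theorem \ref{prop.realstatement} and the lower bound $|t|^{1/\alpha}\le C\,d(\cdot,f_t\cdot)$. I would present the chart argument as the main proof and mention this alternative only if the uniformity in Lemma \ref{lem.homeo} needs to be checked by hand.
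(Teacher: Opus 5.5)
Your proposal is correct and follows the paper's argument. The paper's proof is the single line that the lemma follows from $\Phi_y$ being bi-H\"older, and your identity $\mathrm{Proj}_R = \Phi_y\circ P\circ\Phi_y^{-1}$, with $P(x,t,z)=(x,0,z)$, is exactly what that line relies on. Your uniformity concerns and the alternative route are not needed here: the lemma concerns a single fixed rectangle, and Lemma \ref{lem.homeo} already gives that $\Phi_y$ is bi-H\"older.
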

	\begin{proof} 
		This follows from the fact that $\Phi_y$ is bi-H\"older.
	\end{proof}

	\begin{proposition}\label{prop.holderreturn}
		Suppose that $z_1,z_2 \in Y$ and $f_{T_1}z_1$ and $f_{T_2}z_2$ are both in $R =
		R(y,\tau)$, where $T_1, T_2 \geq 0$ and $\abs{T_1-T_2} \leq \eta$.
		Suppose that $z_1,z_2$ are close enough together so that $f_{T_1}z_2 \in
		f_{[-\tau,\tau]}R = U(y, \tau)$. 
		Then 
		\[
			\abs{T_1-T_2} \leq Kd(z_1,z_2)^\gamma
		.\]
	\end{proposition}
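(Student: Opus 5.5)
We will reduce the return-time comparison to the H\"older structure of the chart $\Phi_y$ from Lemma \ref{lem.homeo}, together with \eqref{eq.HF} and the lower bound in \eqref{eq.HS}. Since $f_{T_1}z_2 \in U(y,\tau) = f_{[-\tau,\tau]}R$, we can write $f_{T_1}z_2 = f_s w$ with $w = \mathrm{Proj}_R(f_{T_1}z_2) \in R$ and $\abs{s} \le \tau$. The point $f_{T_2}z_2$ lies on the same orbit and also in $R$. Since $R$ is a section at scale $\tau$ and $\abs{T_1 - T_2} \le \eta$ (taking $\eta$ small relative to the section scale, or shrinking as needed), the orbit segment of $z_2$ near time $T_1$ meets $R$ only once in the window $f_{[-\tau,\tau]}$. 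Hence $f_{T_2}z_2 = w$ and $T_2 = T_1 - s$, so that
\[
\abs{T_1 - T_2} = \abs{s},
\]
and $s$ is exactly the time coordinate of $f_{T_1}z_2$ in the chart $\Phi_y$.

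It then suffices to bound $\abs{s}$ in terms of $d(f_{T_1}z_1, f_{T_1}z_2)$, using that $f_{T_1}z_1 \in R$ has time coordinate $0$. By Lemma \ref{lem.homeo}, $\Phi_y^{-1}$ is H\"older, and its time component is the map $u \mapsto t(u)$. Since $t(f_{T_1}z_1) = 0$, this gives
\[
\abs{s} = \abs{t(f_{T_1}z_2) - t(f_{T_1}z_1)} \le A\, d(f_{T_1}z_1, f_{T_1}z_2)^{\beta}.
\]
One could alternatively argue directly as follows. The two points $w$ and $f_{T_1}z_1$ are in $R$, and the time-$s$ displacement satisfies $\frac1C\abs{s}^{1/\alpha} \le d(w, f_s w)$ by \eqref{eq.HS}. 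Combined with H\"older continuity of $\mathrm{Proj}_R$ (Lemma \ref{lem.proj}), this gives $d(w, f_s w) \lesssim d(f_{T_1}z_1, f_{T_1}z_2)^{\beta'}$. The chart argument is cleaner, and we will use it.

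Finally, we need to control $d(f_{T_1}z_1, f_{T_1}z_2)$ by $d(z_1,z_2)$. Here \eqref{eq.HF} gives $d(f_{T_1}z_1, f_{T_1}z_2) \le C e^{kT_1} d(z_1,z_2)^\alpha$. This is the main obstacle, since $T_1$ is not a priori bounded and the resulting constant would depend on $T_1$. In the intended application (return times to a proper family), $T_1$ and $T_2$ are bounded by a uniform constant, a multiple of the scale, and we will take that as implicit, so that $K$ may depend on an upper bound for $T_1$. If a statement uniform in $T_1$ is required, we would instead replace $z_i$ by $f_{T_1 - \tau_0} z_i$ for a fixed small $\tau_0$; this reduces to the bounded-time case but shifts the hypothesis onto nearby intermediate points. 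Combining the three estimates yields
\[
\abs{T_1 - T_2} \le A\, C^\beta e^{k\beta T_1} d(z_1,z_2)^{\alpha\beta},
\]
so the claim holds with $\gamma = \alpha\beta$ and $K = A C^\beta e^{k\beta T_{\max}}$.
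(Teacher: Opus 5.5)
Your argument is correct, but the key estimate goes through a different lemma than the paper's. The paper never uses the time coordinate of $\Phi_y^{-1}$. It uses only the H\"older continuity of $\mathrm{Proj}_R$ (Lemma \ref{lem.proj}): since $f_{T_2}z_2=\mathrm{Proj}_R(f_{T_1}z_2)$ and $f_{T_1}z_1=\mathrm{Proj}_R(f_{T_1}z_1)$, this bounds $d(f_{T_1}z_1,f_{T_2}z_2)$. The triangle inequality through $f_{T_1}z_1$ then bounds $d(f_{T_1}z_2,f_{T_2}z_2)$. Finally, the lower bound in \eqref{eq.HS}, licensed by $\abs{T_1-T_2}\le\eta$, turns this into a bound on $\abs{T_1-T_2}$. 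This is essentially the ``direct'' alternative you sketched and set aside.

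Your route instead cites H\"older continuity of the time component of $\Phi_y^{-1}$ from Lemma \ref{lem.homeo}. That is shorter, but the lower bound in \eqref{eq.HS} is still doing the work, hidden inside Lemma \ref{lem.homeo}. It enters via Corollary \ref{cor.halfhomeo}, where the flow parameter $v$ is recovered from $f_vy$.

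The paper's version buys generality. It uses nothing about the section beyond a H\"older projection, so condition (1) of Theorem \ref{t.usefulversion} follows from condition (2) for any section; the remark after the proposition makes this point. In particular it covers the flowed pieces $f_{s_k}D_i$ of the pre-Markov family. Your version needs a bi-H\"older chart for each such section, which is available here by composing $\Phi_y$ with $f_{s_k}$ and using \eqref{eq.HF}, but is an extra step.

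You are also more explicit than the written proof on two points it leaves implicit, and both caveats are needed there as well:
\begin{enumerate}
\item The identification $f_{T_2}z_2=\mathrm{Proj}_R(f_{T_1}z_2)$ requires $\abs{T_1-T_2}$ to be small relative to the section scale $\tau$.
\item The paper's constant $C_1$ silently absorbs the factor $e^{kT_1}$ from \eqref{eq.HF}. So $K$ really depends on an upper bound for $T_1$, which is harmless for the bounded return times in the application.
\end{enumerate}
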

	\begin{proof}
		By \eqref{eq.HF} we have 
		\[
			d(f_{T_1}z_1,f_{T_1}z_2) \leq Ce^{kT_1}d(z_1,z_2)^\alpha
		.\] 
		On the other hand, since the projection map is H\"older, we have
		\[
			d(f_{T_1}z_1,f_{T_2}z_2) \leq C_0 d(f_{T_1}z_1,f_{T_1}z_2)^{\beta} \leq
			C_1 d(z_1,z_2)^{\alpha\beta}
		.\] 
		Thus
		\[
			d(f_{T_1}z_2, f_{T_2}z_2) \leq d(f_{T_1}z_2,f_{T_1}z_1) + d(f_{T_1}z_1,f_{T_2}z_2) \leq
			C_2d(z_1,z_2)^{\alpha\beta}
		.\] 
		But since $\abs{T_1-T_2} \leq \eta$, then by \eqref{eq.HS}, we must have
		\[
			C^{-1}\abs{T_1-T_2}^{\frac{1}{\alpha}}
			\leq d(f_{T_1}z_2,f_{T_2}z_2) \leq C_2d(z_1,z_2)^{\alpha\beta}. \qedhere
		\] 
	\end{proof}
The above lemma can be applied when $z_1,z_2$ are in a domain of continuity for a return map to a union of sections, where $T_1,T_2$ are the return times. When these sections are constructed from standard rectangles, Theorem \ref{t.usefulversion} gives the H\"older return times in (1) of Proposition \ref{prop.holderreturn}. 
\begin{remark}
We note that Proposition \ref{prop.holderreturn} does not use any particular properties about $R(y,\tau)$ except that it has a H\"older projection map. The proof relies on the lower bound in
		\eqref{eq.HS} which we did not assume in \cite{CLT19}, which is why property (1) is not automatic from property (2) in the setup of Theorem \ref{t.usefulversion}.
\end{remark}

Now suppose that $\FFF : Y \to Y$ is a Lipschitz metric Anosov flow on $Y$. It suffices to work with this case because we show in Lemma \ref{lem.pollicott} that a $C^\alpha$ metric Anosov flow is H\"older equivalent to a Lipschitz metric Anosov flow and that this is a metric Anosov flow in the sense of Pollicott and \cite{CLT19}.

We follow the argument of \cite{CLT19}. The following lemma is proved analogously to the construction in Proposition 4.10 of \cite{CLT19}.
\begin{lemma}
	There exists a proper family $(\BBB, \DDD)$  at scale $\alpha>0$  where the $B_i$ and $D_i$ are `standard' rectangles of the form $R$ given in Definition \ref{standardrec}.
\end{lemma}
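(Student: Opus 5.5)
We would follow Bowen's construction of a proper family, as adapted in \cite[Proposition 4.10]{CLT19}, with the standard rectangles $R(y,\tau)=\ang{\wuu_\tau(y)}{\wss_\tau(y)}$ as the sections. First we fix the scale. By Lemma \ref{lem.homeo}, for all sufficiently small $\tau>0$ and every $y$, the map $\Phi_y$ is a bi-H\"older homeomorphism onto the neighbourhood $U(y,\tau)=f_{[-\tau,\tau]}R(y,\tau)$ of $y$. Hence $R(y,\tau)$ is a section with $\xi=\tau$. Its transverse interior $\Int_{\mathcal O}R(y,\tau)$ contains $\ang{\wuu_{\tau'}(y)}{\wss_{\tau'}(y)}$ for some $\tau'<\tau$, using Lemma \ref{lem.subset} and openness of $U(y,\tau')$. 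Continuity of the bracket and of the flow shows that $\diam R(y,\tau)\to 0$ uniformly in $y$ as $\tau\to0$. We then fix a proper-family scale $\alpha>0$ smaller than the scales in Lemma \ref{lem.homeo} and Corollary \ref{cor.lpstau}, and below the expansivity and no-fixed-point scale. We choose $\tau_D<\alpha$ with $\diam R(y,\tau_D)<\alpha$ for all $y$, and then $\tau_B<\tau_D$ much smaller, so that $R(y,\tau_B)\subset R(y,\tau_D)$.

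Next we get the covering condition (2). Since $f_{(-\alpha,0)}\Int_{\mathcal O}R(y,\tau_B)$ is an open set containing $f_{-\alpha/2}y$, the sets $f_{(-\alpha,0)}\Int_{\mathcal O}R(f_{\alpha/2}y,\tau_B)$ form an open cover of $Y$. By compactness we extract centres $y_1,\dots,y_n$ and set $B_i=R(y_i,\tau_B)$, $D_i=R(y_i,\tau_D)$. Condition (1) then holds by the choice of $\tau_D$.

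The main obstacle is condition (3). It requires that no $D_j$ can be reached from $D_i$ both forward and backward within time $4\alpha$, and for $i\neq j$ it forces in particular that the $D_i$ are pairwise disjoint. Following Bowen, we first ensure that the centres can be taken so that the $D_i$ are pairwise disjoint. To do this, we perturb the centres $y_i$ along the flow by distinct small times, so that the flow times separating the centres are spread out. There are finitely many pairs, so we can choose the shifts so that any pair of centres lying within distance comparable to $\alpha$ along nearby orbit segments is separated in time by more than $2\tau_D$ plus the diameter slack. Such shifts preserve (2) provided the covering was made with a margin, that is, with $(-\alpha/2,0)$ in place of $(-\alpha,0)$.

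With disjointness in hand, suppose $f_s p\in D_j$ and $f_{-s'}q\in D_j$ with $p,q\in D_i$ and $s,s'\in[0,4\alpha]$. Since both $D_i$ and $D_j$ have diameter $<\alpha$, we get $f_{s+s'}$ returning a point of $D_i$ near $D_i$ in time at most $8\alpha$. We choose $\alpha$ smaller than a fixed fraction of the minimal period bound from the no-fixed-point and (HS) conditions, which lower-bound $d(f_tz,z)$ for small $|t|$. The product structure from Lemma \ref{lem.homeo} then gives coordinates $\Phi_{y_i}$ in which the flow time is a H\"older coordinate. From this, $f_sD_i$ meeting $D_j$ pins $D_j$ to flow time near $+s$ relative to $D_i$, while $f_{-s'}D_i$ meeting $D_j$ pins it near $-s'$. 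These two placements are incompatible unless $s$ and $s'$ are both tiny, and the tiny case is excluded by disjointness. Making this estimate quantitative, via the bi-H\"older bounds of $\Phi_{y_i}$ and (HS), is the technical heart of the proof. If needed, we would instead pass to the finer selection argument of \cite[Proposition 4.10]{CLT19} verbatim, replacing its geometric estimates with Lemma \ref{lem.homeo}.
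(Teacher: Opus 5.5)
\paragraph{Overall assessment.}
Your overall route is the paper's. The paper proves this lemma only by saying it is analogous to the construction in \cite[Proposition 4.10]{CLT19}: Bowen's construction, with standard rectangles as sections and Lemma \ref{lem.homeo} supplying the flow boxes. Your setup of the scales and of conditions (1) and (2) fits that scheme, up to one small point: $f_tR(y,\tau)$ is in general not $R(f_ty,\tau)$, so the shifted $B_i$ must be taken slightly larger. However, the argument you actually write out for condition (3) does not go through as stated. Your proof of (3) therefore rests entirely on your fallback citation, just as the paper's does.

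\paragraph{The shift step.}
The main gap is the choice of shifts. ``Finitely many pairs'' is not a reason the shifts exist.
\begin{itemize}
\item To keep the margin in (2), all shifts lie in a window of length about $\alpha/2$.
\item Each earlier centre whose section interferes with a given one forbids an interval of relative shifts. Its length is the oscillation $\theta$ of the relative flow-time function of $D_j$ over $D_i$.
\item This $\theta$ is not $2\tau_D$. Standard rectangles through nearby points of different orbits are in general not flow-translates of one another, and the bi-H\"older bound on $\Phi_y$ only gives $\theta\le C(\diam D_j)^{\gamma}$ for some $\gamma<1$.
\end{itemize}
An admissible choice exists only if the number of centres interfering with a given one, times $\theta$, is $\lesssim\alpha$. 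Nothing in your construction controls that number. It is the overlap multiplicity of a cover extracted by compactness at scale $\tau_B$, which you even take much smaller than $\tau_D$. It can grow as the scales shrink with $\alpha$ fixed, and in a general compact metric space there is no volume or doubling argument to bound it.

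\paragraph{The ``tiny case''.}
The claim that the tiny case is excluded by disjointness is false. $D_i\cap D_j=\emptyset$ does not prevent $D_j$ from having points at small positive and at small negative flow time over $D_i$. Rectangles may be totally disconnected (basic sets, trees), so there is no intermediate value argument. Even connected rectangles can cross the flow box of $D_i$ outside $D_i$. What you need is that the relative time function be one-signed, i.e.\ a time separation exceeding $\theta$. That is exactly what the unproved shift step was meant to deliver.

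\paragraph{Diameter bound.}
With only $\diam D_i<\alpha$, \eqref{eq.HS} bounds $s+s'$ by a constant multiple of $\alpha$, and by only a fractional power of $\alpha$ when the H\"older exponent is less than $1$. This is not a small fraction of $\alpha$, so you need $\diam D_i\ll\alpha$.

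\paragraph{What is missing.}
To make (3) self-contained you must supply the missing quantitative control. Either bound the number of interfering neighbours so that it is compatible with $\theta$ and $\alpha$, or give a construction in which interfering sections are ordered automatically. This is what the cited construction has to provide.
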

We then apply the following result, which is Lemma 4.11 of \cite{CLT19} to a proper family of `standard' rectangles.
\begin{lemma} \label{markovpf}
	Let $\FFF$ be a Lipschitz continuous expansive flow on a compact metric space. Given a proper family $(\BBB, \DDD)$ for $\FFF$ at scale $\alpha>0$ where the $B_i$ and $D_i$ are rectangles, there exists a pre-Markov proper family $(\BBB', \DDD')$ at scale $\alpha>0$ such that every $D_k'\in\mathcal{D}'$ is the image under $f_{s_k}$ of some $D_i\in\mathcal{D}$ where $|s_k| << \alpha$.
\end{lemma}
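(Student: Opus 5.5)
The plan is to follow Bowen's construction of Markov sections (as adapted in \cite{rB73} and \cite[Lemma 4.11]{CLT19}). The idea is to perturb the sections of the given proper family by small flow times so that \eqref{premarkov} holds, while keeping $B_i\subset D_i$, the covering property (2), and the separation property (3) of Definition \ref{defn:proper}. Throughout, expansivity together with the Lipschitz continuity of the flow and of the speed will supply the needed uniform control.

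\emph{Step 1: fix margins.} Since the flow is Lipschitz and the sets are compact, I will choose a small $\beta\ll\alpha$. It should be small enough that flowing any $D_i$ by a time $|s|\le\beta$ changes diameters by an amount controlled by a Lipschitz constant. It should also be small enough that the strict inclusions and disjointness in properties (1)--(3) persist under such perturbations; these are open conditions, and the margin comes from the strict inequalities $4\alpha$ versus $2\alpha,3\alpha$ and from property (2) being an open cover. The new sections will be $D_k'=f_{s_k}D_i$ and $B_k'=f_{s_k}B_i$. Since $f_{s}$ maps rectangles to rectangles (the Bowen bracket commutes with the flow up to reparametrization of $\Proj$), the $B_k'$ remain closed rectangles.

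\emph{Step 2: enforce the pre-Markov condition inductively.} I order the indices $1,\dots,n$ and choose the shifts $s_1,s_2,\dots$ one at a time. The obstruction to \eqref{premarkov} is a pair $i,j$ with $B_i\cap f_{[-2\alpha,2\alpha]}B_j\neq\emptyset$ but $B_i\not\subset f_{[-3\alpha,3\alpha]}D_j$. Because $\diam B_i<\alpha$ and the flow has speed bounded below (no fixed points, via expansivity), intersecting $f_{[-2\alpha,2\alpha]}B_j$ forces $B_i$ to lie within flow time about $2\alpha+O(\alpha)$ of $D_j$. The only failures then come from $B_i$ meeting the ``transverse boundary'' of $f_{[-3\alpha,3\alpha]}D_j$. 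In the spirit of Bowen, I will handle this by subdividing and re-selecting. Where a $B_i$ straddles, I replace it by a smaller standard rectangle, or by several of them, chosen using the freedom in Step 1 to stay inside or well away from $f_{[-3\alpha,3\alpha]}D_j$. I then restore the covering property (2) by compactness and a finite subcover, and use the flow-time shifts $s_k$ to restore the disjointness (3) among the new, possibly overlapping, pieces.

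\emph{Step 3: verify properties.} After each modification I check properties (1)--(3). Property (3) is the delicate one. Choosing the $s_k$ generically from an interval of length $\beta$ avoids the finitely many ``bad'' time differences, since each bad event restricts $s_k$ to a set of small measure, by the Lipschitz bound and compactness. The main obstacle is making Step 2 terminate: fixing \eqref{premarkov} for one pair must not break it for pairs already treated. I would try to control this by processing the pairs in a fixed order and using margins $2\alpha<2\alpha+\beta<3\alpha-\beta<3\alpha$, so that earlier verified inclusions survive the later perturbations of size at most $\beta$. If this fails, I would follow the argument of \cite[Lemma 4.11]{CLT19} verbatim.
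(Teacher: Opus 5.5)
There is a genuine gap. The paper does not reprove this lemma; it imports it from \cite[Lemma 4.11]{CLT19}, which is Bowen's argument. Your outline points to the same source and the right ingredients: small flow-shifts, subdivision into smaller rectangles, and compactness for (2). As written, though, it does not establish \eqref{premarkov}.

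\textbf{Where it fails.} Step 2 is the only step that could produce the pre-Markov property. It is a pair-by-pair repair whose termination you explicitly leave open, and the fallback (``follow \cite[Lemma 4.11]{CLT19} verbatim'') is the statement itself. The obstruction is real.
\begin{itemize}
\item Small flow-shifts do not change where $B_i$ sits transversally relative to $D_j$. So they cannot repair a $B_i$ some of whose local orbits miss $D_j$.
\item Nothing in the hypotheses makes $\diam B_i$ small compared with the transverse margin of $B_j$ inside $D_j$, so such failures are not excluded.
\item Replacing a straddling $B_i$ by pieces ``inside or well away from'' $f_{[-3\alpha,3\alpha]}D_j$ is constrained by the covering condition (2). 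It also changes which pairs satisfy the hypothesis of \eqref{premarkov}. So your time margins $2\alpha<2\alpha+\beta<3\alpha-\beta<3\alpha$ do not protect pairs already treated.
\item If you also shrink the $D$'s, you shrink the margins, which creates new failures. You also lose the conclusion $D'_k=f_{s_k}D_i$.
\end{itemize}
Your treatment of (3) is misattributed. For $i\neq j$, (3) survives sufficiently small shifts simply because $\{t: f_tD_i\cap D_j\neq\emptyset\}$ is closed. If that set misses $[-4\alpha,0]$ (or $[0,4\alpha]$), it misses a neighborhood of that interval. No genericity or ``small measure of bad shifts'' argument is needed, and the latter is not justified.

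\textbf{The missing idea.} A single uniform smallness choice makes everything non-inductive.
\begin{itemize}
\item Each $B_j$ is a compact subset of $\Int_{\mathcal O}D_j\subseteq (f_{(-\alpha/2,\alpha/2)}D_j)^\circ$. So there is $\beta>0$ such that the open $\beta$-neighborhood $N_\beta(B_j)$ lies in $f_{(-\alpha/2,\alpha/2)}D_j$ for every $j$.
\item Choose $\gamma>0$ with $d(x,x')<\gamma\Rightarrow d(f_tx,f_tx')<\beta$ for $|t|\le 3\alpha$; uniform continuity suffices.
\item Subdivide each $B_i$ into finitely many closed subrectangles $B_{i,\ell}$ of diameter $<\gamma$ whose transverse interiors still give (2). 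This is possible by compactness and the local product structure.
\item Set $B'_{i,\ell}=f_{s_{i,\ell}}B_{i,\ell}$ and $D'_{i,\ell}=f_{s_{i,\ell}}D_i$, with distinct, sufficiently small shifts.
\end{itemize}
Now suppose $x=f_ty$ with $x\in B'_{i,\ell}$, $y\in B'_{j,m}$ and $|t|\le 2\alpha$. Then every $x'\in B'_{i,\ell}$ satisfies $f_{-t-s_{j,m}}x'\in N_\beta(B_j)$. Hence $x'\in f_{(t-\alpha/2,\,t+\alpha/2)}D'_{j,m}\subseteq f_{[-3\alpha,3\alpha]}D'_{j,m}$. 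This is \eqref{premarkov} for all pairs at once, with no ordering needed.

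Properties (1) and (2), and (3) for shifts of different sections $D_i, D_j$, then follow from the compactness and closedness arguments above. The one genuinely new check is (3) between two shifts $f_sD_i$, $f_{s'}D_i$ ($s<s'$) of the same section, which your generic-shift argument does not address. Since $f_{s'-s}$ maps the first onto the second, (3) forces $f_uD_i\cap D_i=\emptyset$ for all $u\in[s'-s,\,4\alpha+s'-s]$. In other words, $D_i$ must have no short self-returns, and this has to be arranged from the choice of scale and the local flow-box structure.
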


We now have a proper family $(\BBB', \DDD')$ which is pre-Markov and such that every $D_k'\in\mathcal{D}'$ is the image under $f_{s_k}$ of a standard rectangle. It follows from Lemma \ref{lem.proj} and Proposition \ref{prop.holderreturn} that (1) and (2) of Theorem \ref{t.usefulversion} are verified, and thus the flow admits a strong Markov coding.
When the flow is transitive, the results for equilibrium states stated in Theorem \ref{Coding} follow from the existence of the strong Markov coding, see \cite[\S 6]{CLT19}.

\section{Geodesic flows} \label{s.geodesicflow}
We develop our applications to geodesic flows in negative curvature.
\subsection{CAT($-1$) spaces}\label{subsec.CAT-1}
Let $(X, d)$ be a proper $\CAT(-1)$ space.  
We set up notation and collect some facts we will use. For more extensive background, references
include \cite{Roblin, wB95, BH99}. We study the space of geodesics
\[
	GX \coloneqq \{c : \R \to X \colon c \text{ is an isometric embedding}\}
.\]

The space $GX$ carries a natural geodesic flow 
\[g_t : GX \to GX \qquad (g_tc)(s) = c(s+t)\].
We equip $G  X$ with the metric
\[ d_{G  X}(  c,  c') :=  \int_{-\infty}^\infty d( c(s),  c'(s)) e^{-2|s|}ds.\]

We let $\partial X$ be the boundary at infinity of $X$, defined to be
the collection of equivalence classes of geodesic rays, where two rays are equivalent if they stay within
bounded distance of each other. We equip $\partial X$ with the cone topology, which makes $\partial X$
compact. Given a geodesic $c$, then $c(\infty)$ and $c(-\infty)$ are used to denote the
points in $\partial  X$ corresponding to the positive and negative geodesic rays defined by $c$,
respectively. The union $\bar X = X\sqcup\partial X$ can be given a topology which restricts to the cone
topology on $\partial X$ and compactifies $X$ in such a way that
$\lim_{t\to\pm\infty}c(t) = c(\pm\infty)$ for any $c \in GX$.

On $GX$ we define the dynamical sets $W^\epsilon(c)$ for $\epsilon \in \{\mathrm{uu},\mathrm{u},
\mathrm{ss},\mathrm{s}\}$ by setting
\[
	W^\mathrm{uu}(c) = \{c' \in GX \colon \lim_{t\to-\infty}d_{GX}(g_tc,g_tc') = 0\},
	\quad W^\mathrm{u}(c) = \bigcup_{t \in \R}g_t W^{\mathrm{uu}}(c)
,\] 
and similarly
\[
	W^\mathrm{ss}(c) = \{c' \in GX \colon \lim_{t\to\infty}d_{GX}(g_tc,g_tc') = 0\},
	\quad W^\mathrm{s}(c) = \bigcup_{t \in \R}g_t W^{\mathrm{ss}}(c)
.\] 
Each of these induces a partition on $GX$ which we denote by $W^\sigma$ for
$\sigma \in \{\mathrm{uu},\mathrm{u},\mathrm{ss},\mathrm{s}\}$. T he dynamical partitions can be characterized using the asymptotic geometry as follows. 
We have 
\[
	W^\mathrm{u}(c) = \{c' \in GX \colon c'(-\infty) = c(-\infty)\}, \quad
	W^\mathrm{s}(c) = \{c' \in GX \colon c'(\infty) = c(\infty)\}
.\] 
We define the Busemann functions by
\[
	\beta_\xi(x,y) = \lim_{z \to \xi} (d(x, z)-d(y,z))
\]
for $\xi \in \bX$, $x, y \in X$. 
The level sets of the Busemann functions $\beta_\xi(\cdot,y)$ are called the \emph{horospheres}
centered at $\xi$, and we write $H_{\xi, y} = \{ x : \beta_\xi(x,y)=0\}$.
Within each weak stable (resp. unstable) set, we then have the strong stable (resp. unstable) set given by
\begin{align*}
	\wuu(c) &= \{c' \in \wu(c) \colon c'(0) \in H_{c(-\infty),c(0)}\},\\
	\wss(c) &= \{c' \in \ws(c) \colon c'(0) \in H_{c(\infty),c(0)}\}.
\end{align*}
It is well known from the negative curvature geometry that the families $\wuu$ and $\wss$ satisfy \eqref{eq.HW} globally. 

There is an identification of $GX$ with $\partial^2 X \times \R$. These are called \emph{Hopf cooordinates}. This identifies the orbit of each $c \in GX$
with $\{(c(-\infty),c(\infty))\}\times \R$ and conjugates 
$g_t$ to the flow on the $\R$ component. This identification is not unique, but if
$\varphi,\varphi' : GX \to \partial^2 X \times \R$ are two such identifications, then there is $\kappa : \partial^2 X \to \R$ so that
\[
	(\varphi' \circ \varphi^{-1}) (\xi,\eta,t) = (\xi,\eta,t+\kappa(\xi,\eta)).
\]  

We have homeomorphisms
\[
	\begin{tikzcd}
		W^\mathrm{uu}(c) \arrow[rr, "c' \mapsto c'(\infty)"] &  &
		\partial X \setminus \{c(-\infty)\},
	\end{tikzcd}
	\quad
	\begin{tikzcd}
		W^\mathrm{ss}(c)\arrow[rr, "c' \mapsto c'(-\infty)"] &  &
		\partial X \setminus \{c(\infty)\}.
	\end{tikzcd}
\]
It follows that, provided $c'(\infty) \neq c(-\infty)$,
then $\wss(c) \cap \wu(c')$ contains exactly one geodesic.

For any $c,c' \in GX$ such that $c'(\infty) \neq c(-\infty)$, the product operation $\ang{\:\,}{\:}$
	\[
		\{\ang{c}{c'}\} = \wss(c)\cap \wu(c')
	\] 
is thus well-defined. In contrast to Definition \ref{def.cc}, the bracket operation is defined globally. Of course, by definition, there exists $v=v(c,c')$ so that
\[
\ang{c}{c'} \in \wuu(g_v c').
\]

To show that the bracket operation gives us canonical coordinates, it remains to show that for $K_1>0$ and $d(c, c') \leq K_1$, there exists $K_2$ so that $|v(c,c')| \leq K_2$ and $\ang{c}{c'} \in \wss_{K_2}(c)\wuu_{K_2}(g_v c')$. Using \eqref{eq.HW}, we obtain canonical coordinates at all scales. We leave the details of this argument as an exercise, and we note that further exposition was given in \cite[\S3]{CLT19}.

Since $g_t$ is unit speed it obviously satisfies \eqref{eq.HS}, and it is easily verified that 
\eqref{eq.HF} is satisfied, since
\begin{align*}
	d_{GX}(g_tc,g_tc') &= \int_{-\infty}^{\infty}d(c(s+t),c'(s+t))e^{-2\abs{s}}\,ds\\
										 &= \int_{-\infty}^{\infty}d(c(s),c'(s))e^{-2\abs{s-t}} \leq e^{2\abs{t}}d_{GX}(c,c').
\end{align*}
We conclude that the geodesic flow on GX satisfies the hypotheses of Theorem \ref{prop.realstatement}\footnote{Thus, if $X/ \Gamma$ is compact, then the geodesic flow on $GX/\Gamma$ is $C^\alpha$ metric Anosov. See Remark \ref{rem:limitiations} for the limitations of directly applying statement Theorem \ref{prop.realstatement} when $X/ \Gamma$ is non-compact.} It follows from Theorem \ref{prop.realstatement} that the bracket operation of the flow is \ul{} continuous on $(GX \times GX)_\delta$ for $\delta >0$ and that the holonomy maps are H\"older with uniform constants and exponents. See the statement of Corollary \ref{GeodesicFlow}.
\subsection{Pinched negative curvature manifolds and H\"older holonomies} 
We now consider the case when $X$ is a pinched negatively curved Hadamard manifold. Then $X$ is a $\CAT(-\kappa^2)$ space for some $\kappa>0$. We can identify the unit tangent bundle $T^1X$ with the space of geodesics $GX$. We can thus equip $T^1X$ with the metic $d_{GX}$. In this metric, the geodesic flow on $GX$ satisfies the hypotheses of Theorem \ref{prop.realstatement}. There are other relevant metrics in addition to $d_{GX}$: 
\begin{itemize}
\item The Sasaki metric $d_S$ is the natural Riemannian metric on $T^1X$.
 \item The Knieper-Manning metric $d_1$ on $GX$ (and thus $T^1X$) is:
  \[
d_1(c,c')
    := \max_{t \in [0,1]}
       d_X\bigl(c(t),c'(t)\bigr).
\]
\item The metric $d'_{GX}$ on geodesics which use  $e^{-t^2}$ for normalization:
\[
d'_{GX}(c,c')
=
\frac{1}{\sqrt{\pi}}
\int_{\mathbb{R}}
d_X\bigl(c(t),c'(t)\bigr)e^{-t^2}\,dt.
\]
\end{itemize}
In Paulin-Pollicott-Schapira, the metric $d'_{GX}$ is used instead of $d_{GX}$. In pinched negative curvature, they show in \cite[Lemma 2.3]{PPS} that $d'_{GX}$ is \ul{}-equivalent to $d_1$. Similarly, one can show that $d_{GX}$ is \ul{}-equivalent to $d_1$ by adapting the proof of \cite[Lemma 2.3]{PPS}. In pinched negative curvature, $d_1$ is Lipschitz equivalent to $d_S$.   Thus, for a pinched negative curvature Hadamard manifold all the metrics $d_1, d_S, d_{GX}, d'_{GX}$ are \ul{}-equivalent. Perhaps the main advantage of using $d'_{GX}$ rather than $d_{GX}$ is that there exists a constant $C>0$ so that $d'_{GX}(c, c')\leq Cd_1(c, c')$. 

Now let $\Gamma$ be a non-elementary torsion-free Kleinian group and let $X_0 = X/\Gamma$. Since these metrics are $\Gamma$-invariant, they induce metrics on the quotient $GX/\Gamma$ which is identified with the unit tangent bundle of $X_0$. Clearly, the \ul{}-equivalence between these metrics on $GX$  passes to the metrics on $GX/\Gamma$ and thus $T^1X_0$. 
In summary, we have the following result.
\begin{proposition} \label{oneholderclass}
	Let $X_0 = X / \Gamma$ be a pinched negative curvature manifold. Then $d_{GX}$, $d_1$, and $d_S$ are all \ul{}
	equivalent considered on $GX_0$ or $T^1X_0$. 
\end{proposition}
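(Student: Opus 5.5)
The plan is to prove the \ul{} equivalences upstairs on $GX \cong T^1X$ with constants that are $\Gamma$-invariant, and then descend them to the quotient. Because $d_{GX}$, $d_1$ and $d_S$ are all $\Gamma$-invariant, any bound of the form $d'(c,c') \leq A\, d(c,c')^\gamma$ valid for all $c,c'$ with $d(c,c') \leq \delta$ upstairs passes to the quotient metrics. To see this, take $\delta$ below the scale at which the quotient distance is realized by lifts at distance at most $\delta$; then for nearby points downstairs, choose lifts realizing the $d$-distance. This gives the inequality for the induced quotient metrics, since the quotient metric is $\inf_\gamma$ over lifts and is bounded above by any particular lift pair. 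Since \ul{} equivalence is transitive, with exponents multiplying and scales shrinking, it suffices to prove two comparisons upstairs: $d_1 \sim d_S$ and $d_{GX} \sim d_1$.

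\emph{Step 1: $d_1$ versus $d_S$.} For $d_S \lesssim d_1$, I would use Jacobi field estimates under the pinching $-b^2 \leq K \leq -a^2$. Suppose the geodesics $c,c'$ satisfy $d(c(t),c'(t)) \leq \delta$ on $[0,1]$. Join $c(0)$ to $c'(0)$ and $c(1)$ to $c'(1)$; comparison geometry (Rauch) bounds the angle between $\dot c(0)$ and the parallel transport of $\dot c'(0)$ by a constant times $\max\{d(c(0),c'(0)), d(c(1),c'(1))\}$. This yields a Lipschitz bound on $d_S$. For the reverse inequality $d_1 \lesssim d_S$, the map $v \mapsto (\pi v, \pi g_1 v)$ is Lipschitz with respect to the Sasaki metric, with a uniform constant, because the derivative of the geodesic flow over time at most $1$ is bounded by $\cosh b$ in terms of the curvature bound.

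\emph{Step 2: $d_{GX}$ versus $d_1$.} This adapts \cite[Lemma 2.3]{PPS}.
\begin{itemize}
\item[(i)] \emph{$d_{GX} \lesssim d_1$.} Convexity of $t \mapsto d(c(t),c'(t))$ in a $\CAT(-1)$ space gives $d(c(t),c'(t)) \leq d_1(c,c')(1+2|t|)$ for all $t$. Integrating against $e^{-2|t|}$ gives $d_{GX} \leq C d_1$.
\item[(ii)] \emph{$d_1 \lesssim d_{GX}$.} The key fact is the triangle-inequality bound $d(c(t),c'(t)) \geq d(c(s),c'(s)) - 2|t-s|$. If $D = d_1(c,c')$ is attained at some $s \in [0,1]$, then $d(c(t),c'(t)) \geq D/2$ for $|t-s| \leq D/4$. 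Hence $d_{GX}(c,c') \geq c_0 D^2$ for $D \leq 1$, which gives $d_1 \leq C\, d_{GX}^{1/2}$.
\end{itemize}
Combining (i) and (ii) shows that $d_{GX}$ and $d_1$ are \ul{} equivalent, with exponent $1/2$ in one direction.

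The main obstacle is Step 1. There one must ensure that the Sasaki-metric comparison uses only curvature bounds, and not bounds on derivatives of curvature, which are unavailable for the $B^3$ examples. Rauch comparison requires only sectional curvature bounds, so I expect this to go through. The descent to the quotient is routine once the scale $\delta$ is chosen uniformly; such a choice is possible because all the upstairs estimates hold globally at every scale, independent of the injectivity radius.
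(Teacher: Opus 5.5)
Your overall route is the paper's: a Lipschitz comparison of $d_1$ and $d_S$ from the curvature bounds, an adaptation of the Paulin--Pollicott--Schapira argument for $d_{GX}$ versus $d_1$, and descent by $\Gamma$-invariance. Step 1, Step 2(ii) and the descent are fine (minimizing lifts exist, and your upstairs estimates hold at a uniform scale).

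The gap is Step 2(i). Convexity of $\phi(t)=d(c(t),c'(t))$ bounds $\phi$ from above only between the sample points. For $t\geq 1$ it gives the \emph{lower} bound $\phi(t)\geq t\phi(1)-(t-1)\phi(0)$.

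The inequality $\phi(t)\leq d_1(c,c')(1+2|t|)$ is false. In a tree, two geodesics can coincide on $[0,1]$ and branch later, so $d_1=0$ while $\phi\not\equiv 0$. In the hyperbolic plane, two geodesics through a common point at small angle $\theta$ have $\phi(t)\approx\theta\sinh|t|$, which overtakes $\theta\sinh(1)(1+2|t|)$.

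The conclusion $d_{GX}\leq C\,d_1$ also fails in general. Take constant curvature $-b^2$ with $b>2$, which is pinched and $\CAT(-1)$. The same two geodesics have $d_1\asymp\theta$. They reach distance of order one at time $t_0\approx\frac1b\log(1/\theta)$, and afterwards $\phi(t)\approx 2(|t|-t_0)$. Hence $d_{GX}\gtrsim e^{-2t_0}\asymp\theta^{2/b}\gg d_1$.

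This is why the paper singles out $d'_{GX}\leq C\,d_1$ as the advantage of $d'_{GX}$: the Gaussian weight beats exponential divergence. It is also why the paper stresses that pinching is essential. Your (i) uses no lower curvature bound, which should have been a warning sign.

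The repair uses $K\geq -b^2$. Jacobi field estimates, which need only sectional curvature bounds, give $\|Dg_t\|\leq C e^{b|t|}$ in the Sasaki metric. The footpoint projection is $1$-Lipschitz. So with $D=d_1(c,c')$ small and Step 1,
\[
\phi(t)\leq d_S(g_t\dot c(0),g_t\dot c'(0))\leq Ce^{b|t|}d_S(\dot c(0),\dot c'(0))\leq C'e^{b|t|}D,
\]
while trivially $\phi(t)\leq D+2|t|$. Split $\int\phi(t)e^{-2|t|}\,dt$ at $T=\frac1b\log(1/D)$. This gives $d_{GX}\lesssim D^{\min\{1,2/b\}}\log(1/D)$, hence $d_{GX}\leq A\,d_1^{\gamma}$ for any $\gamma<\min\{1,2/b\}$. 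The bound is H\"older rather than Lipschitz, which is all the proposition needs. With this replacement the rest of your argument goes through.
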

We emphasize that the curvature pinching assumption is essential for these equivalences. For $\CAT(-1)$ spaces, Proposition \ref{oneholderclass} no longer holds. For example, if $X$ is a tree, $d_{GX}$ and $d'_{GX}$ are not \ul{}-equivalent.

The detailed definition of a foliation $\mathcal{W}$ of a manifold having H\"older holonomies is given in Pugh, Shub, and Wilkinson \cite[p.520]{PSW}.  We take arbitrary local transversals to $\mathcal{W}$ and slide along the leaves of $\mathcal{W}$, equipping the transversals with a metric inherited from a fixed metric. The definition asks that the H\"older constants should be uniform over all `reasonable' transversals. The class of reasonable transversals is defined in detail in \cite[p.521]{PSW}.

By Corollary \ref{Holderholonomy}, the $u$-holonomies are
H\"older continuous in $d_{GX}$ when the transversals are local leaves of $\wss$. By Proposition
\ref{oneholderclass}, the same is true if we fix one of the other natural metrics one can consider in this
setting. To obtain H\"older holonomies in the sense of the above definition, we must bootstrap this regularity from the case where the transversals
are $\wss$ leaves to the case where the transversals are as above. We thus prove Theorem
\ref{Holonomies} by proving the following proposition.
\begin{proposition} \label{thmD:detailed}
Let $(M,\mathfrak{g})$ be a complete connected pinched negatively curved manifold.  If the $u$-holonomy maps on $\wgss$ and the
	$ss$-holonomy maps on $\wguu$ are H\"older, then $\wgu$ and $\wgss$ have H\"older holonomies. Similarly, $\wguu$ and $\wgs$ have H\"older holonomies. 
\end{proposition}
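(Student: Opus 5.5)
We would prove the statement for $\wgu$ and $\wgss$; the claims for $\wguu$ and $\wgs$ follow by applying the same argument to the time-reversed flow, using Corollary~\ref{cor.realstatementi} in place of Theorem~\ref{prop.realstatement}. Everything may be done in the universal cover $T^1X$. The holonomies are local and the transversals are small, so we may lift to $X$, where the bracket is defined globally (\S\ref{subsec.CAT-1}). Proposition~\ref{oneholderclass} lets us measure distances with the Sasaki metric $d_S$ throughout. The hypothesis, which is supplied by Corollary~\ref{GeodesicFlow}, says that the maps $x' \mapsto \ang{y}{x'}$ from $\wgss$-plaques to $\wgss$-plaques and $x' \mapsto \ang{x'}{y}$ from $\wgu$-plaques to $\wgu$-plaques are H\"older with uniform constants. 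More generally, Theorem~\ref{prop.realstatement} gives that the bracket is jointly \ul{}. The strategy is to factor an arbitrary holonomy between reasonable transversals through these canonical transversals.

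For $\wgu$: let $\Sigma_1,\Sigma_2$ be nearby smooth transversals to $\wgu$, of complementary dimension $n-1$, in the uniformly controlled class of \cite[p.521]{PSW}, and let $h:\Sigma_1\to\Sigma_2$ be the $\wgu$-holonomy. Fix $p\in\Sigma_1$. Define $\pi_1:\Sigma_1\to\wgss_\epsilon(p)$ by $\pi_1(q)=\ang{p}{q}$, which is the point where $\wgu(q)$ meets the strong stable plaque of $p$. Similarly define $\pi_2:\Sigma_2\to\wgss_\epsilon(h(p))$ by $\pi_2(q)=\ang{h(p)}{q}$. Then
\[
h=\pi_2^{-1}\circ h^u_{p\,h(p)}\circ\pi_1 .
\]
The middle factor is H\"older by hypothesis. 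The map $\pi_1$ is the restriction of the \ul{} bracket to $\{p\}\times\Sigma_1$, so it is H\"older by Proposition~\ref{lem.2ndvar}, and likewise for $\pi_2$. It remains to show that $\pi_2^{-1}$ is H\"older. For $\wgss$ the factorization is analogous: one projects transversals onto $\wgu$-plaques through $x'\mapsto\ang{x'}{y}$, which is H\"older by Proposition~\ref{lem.1stvar}, and uses the $ss$-holonomy on $\wgu$ in the middle.

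The main obstacle is the H\"older continuity of the inverse of a projection from a general smooth transversal $\Sigma$ to a canonical plaque. We would handle it via the local product chart. By Corollary~\ref{cor.lpstau} and Lemma~\ref{lem.homeo}, the map $\Phi:(a,b)\mapsto\ang{a}{b}$, from a $\wgu$-plaque times a $\wgss$-plaque to a neighbourhood, is bi-H\"older. In these coordinates $\Sigma$ is a graph $\{\ang{\phi(b)}{b}\}$ over the $\wgss$-factor. The function $\phi$ exists because the leaves of $\wgu$ are $C^1$ submanifolds, being weak unstable horospherical manifolds, which are uniformly transverse to $\Sigma$. With this description, $\pi^{-1}(b)=\Phi(\phi(b),b)$, so it suffices to show that $\phi$ is H\"older.

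To do this we would use the uniform transversality of $\Sigma$ to the $\wgu$-leaves, which the reasonable-transversal hypothesis provides with a uniform angle. Together with the smoothness of $\Sigma$, this should give the estimate
\[
d_S(q,q')\le C\,d_S\bigl(\ang{p}{q},\ang{p}{q'}\bigr)^{\theta}, \qquad q,q'\in\Sigma,
\]
for some uniform $\theta>0$. The mechanism is that two points of $\Sigma$ that are far apart in $d_S$ must be separated in the transverse direction, that is, lie on $\wgu$-leaves which are far apart. This separation is measured by the bi-H\"older inverse chart $x\mapsto(\ang{x}{p},\ang{p}{x})$. If a direct geometric argument is not clean, we would fall back on a compactness and uniformity argument over the class of transversals combined with the bi-H\"older chart. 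The uniformity of all constants over $T^1X$ comes from the uniform canonical coordinates of Corollary~\ref{GeodesicFlow}. Finally, everything descends to $M$ at small scales because the metrics are $\Gamma$-invariant, as noted in Remark~\ref{rem:limitiations}.
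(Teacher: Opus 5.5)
Your factorization is fine as far as it goes: $\pi_1$ is H\"older by Proposition~\ref{lem.2ndvar}, and the middle factor is H\"older by hypothesis. The gap is the step you yourself single out, H\"older continuity of $\pi_2^{-1}$. You assert it (``should give'') but do not prove it, and neither route you suggest proves it.

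Writing $\Sigma_2$ as a graph $\{\ang{\phi(b)}{b}\}$ reduces nothing. Since $\Phi$ is bi-H\"older and $\pi_2^{-1}(b)=\Phi(\phi(b),b)$, the map $\phi$ is H\"older if and only if $\pi_2^{-1}$ is. The compactness fallback cannot work either. Compactness over the class of transversals gives at best a uniform modulus of continuity for $\pi_2^{-1}$, never a H\"older one, and the inverse of a H\"older homeomorphism need not be H\"older. Your heuristic, that far-apart points of $\Sigma_2$ lie on far-apart $\wgu$-leaves, is the right one. But it must be made quantitative, and the bi-H\"older chart alone cannot do that: it carries no information about how $\Sigma_2$ sits relative to the leaves.

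The missing idea is a first-order cone estimate, which is Lemma~\ref{lem.transverse} in the paper.
\begin{itemize}
\item Work in smooth local coordinates $\R^{n_1}\times\R^{n_2}$ in which the tangent distribution $E^{\mathrm{u}}$ of $\wgu$ has slope at most $\kappa<1$ (possible since $E^{\mathrm{u}}$ is continuous). Arrange also that $\Sigma_2$ has coslope at most $\kappa$; this is where the control on reasonable transversals is used.
\item For $q,q'\in\Sigma_2$, let $z=\ang{q'}{q}$, the point of $\wgss(q')\cap\wgu(q)$. The chord from $q$ to $z$ runs along a $\wgu$-plaque, so it has slope at most $\kappa$. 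The chord from $q$ to $q'$ lies in $\Sigma_2$, so it has coslope at most $\kappa$.
\item The two chords therefore cannot be nearly parallel. This forces $d(q,z)\le\kappa' d(z,q')$, and hence $d(q,q')\le(1+\kappa')\,d(\ang{q'}{q},q')$. This is a Lipschitz bound, and it uses no H\"older regularity of $E^{\mathrm{u}}$ at all.
\item Now use $\ang{q'}{\ang{h(p)}{q}}=\ang{q'}{q}$ and $\ang{q'}{\ang{h(p)}{q'}}=q'$. The uniform H\"older continuity of the bracket in its second variable (Proposition~\ref{lem.2ndvar}, Corollary~\ref{GeodesicFlow}) then gives $d(q,q')\le(1+\kappa')K\,d(\pi_2q,\pi_2q')^{\theta}$. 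This is exactly the estimate you need.
\end{itemize}

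With this lemma the factorization is in fact unnecessary. For $x,y\in\Sigma_1$ one has $\ang{h(y)}{x}=\ang{h(y)}{h(x)}$ and $\ang{h(y)}{y}=h(y)$, so
\[
d(h(x),h(y))\le(1+\kappa')\,d\bigl(\ang{h(y)}{x},\ang{h(y)}{y}\bigr)\le(1+\kappa')K\,d(x,y)^{\theta}.
\]
This is the paper's argument; its abstract bracket $W^1(x)\cap W^2(y)$ is $\ang{y}{x}$ in Bowen's convention.
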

The following arguments suffice to prove the proposition. Suppose $M^n$ is a smooth manifold with transverse foliations $W^1$ and $W^2$ tangent to
	continuous distributions $E^1$ and $E^2$, of dimensions $n_1$ and $n_2$,
	respectively. Suppose the local product map for these two foliations is H\"older continuous.

	Let $p \in M$ be arbitrary. Because $E^1$ is a continuous distribution, we can choose
	smooth coordinates sending $p$ to $0 \in \R^n$ so that $E^1$ is
	``almost horizontal" in a small enough neighborhood around $0$. That is, we write $\R^n = \R^{n_1}\times \R^{n_2}$ and for $x \in
	\R^n$ we write $x = (x_1,x_2)$ for $x_i \in \R^{n_i}$. Then there are
	smooth coordinates such that, for a small enough
	neighborhood around $p$ and any $q$ in this neighborhood, if $v \in E^1_q$, then
	$v$ has slope at most $\kappa$. That is, we have
	\[
		\abs{v_2} \leq \kappa \abs{v_1},
	\]
	and we can assume $\kappa < 1$. For simplicity, we extend everything to all of $\R^n$.
	We now consider the following setup: we are given foliations $W^1$, $W^2$ of $\R^n$ tangent to
	continuous distributions $E^1$, $E^2$ such that $E^1$ is almost horizontal.
	Any two leaves of $W^1$, $W^2$ intersect each other exactly once, and they give rise
	to a (locally) H\"older product function $\ang{\:\,}{\:}:(\R^n)^2 \to \R^n$ given by
	\[
		\{\ang{x}{y}\} = W^1(x) \cap W^2(y)
	.\] 
	Suppose we take any arbitrary transversal $T$ to $W^1$,
	and suppose that $T$ is ``almost vertical" in the
	sense that all the \emph{coslopes} along $T$ are at most $\kappa$.
	That is, for any vector $v \in \R^n$ tangent to $T$, we have $\abs{v_1} \leq \kappa\abs{v_2}$.
	
	Since $\kappa < 1$ then the slopes of $T$ are all strictly greater than any of the slopes of $W^1$. Let $x,y \in \R^n$ be arbitrary, and let $x', y' \in T$ be the images under the $W^1$-projection
	to $T$. We show that $d(x',y') \leq
	Cd(x,y)^\alpha$ for some $C,\alpha$.

	\begin{lemma}\label{lem.transverse}
		Since $T$ is almost vertical and $W^1$ is almost horizonal, then
		there is a constant $\kappa'$ such that
		\[
			d(x',\ang{x'}{y'}) \leq \kappa' d(y', \ang{x'}{y'})
		.\] 
	\end{lemma}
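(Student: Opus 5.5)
The plan is to read Lemma \ref{lem.transverse} as a statement about the geometry of a small triangle with vertices $x'$, $y'$, $z := \ang{x'}{y'}$. By definition of the product, $z \in W^1(x')$ and $z \in W^2(y')$. The two points $x', y'$ lie on the almost vertical transversal $T$, and $x'$ and $z$ lie on a common almost horizontal leaf of $W^1$. The inequality then follows by comparing horizontal and vertical components in the fixed smooth coordinates on $\R^n = \R^{n_1}\times\R^{n_2}$, where we may take $d$ to be Euclidean, or comparable to it, on the small neighborhood.

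First I would control the leaf $W^1(x')$. Integrate along a $C^1$ path in the leaf from $x'$ to $z$. Every tangent vector satisfies $\abs{v_2}\le\kappa\abs{v_1}$, so $W^1(x')$ is locally a graph over $\R^{n_1}$ with Lipschitz constant at most $\kappa$, and
\[
	\abs{(z-x')_2} \le \kappa \abs{(z-x')_1}.
\]
This needs the leaf through $x'$ to be a graph over a ball in the horizontal factor, which follows from almost horizontality and the fact that we extended everything to $\R^n$. Similarly, $T$ is a Lipschitz graph over the vertical factor. Since $x', y' \in T$, this gives
\[
	\abs{(y'-x')_1}\le\kappa\abs{(y'-x')_2}.
\]

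Next I would bound $\abs{(z-x')_1}$ by the triangle inequality in the horizontal component. We have $(z-x')_1=(z-y')_1+(y'-x')_1$, so
\[
	\abs{(z-x')_1}\le \abs{z-y'}+\kappa\abs{(y'-x')_2}.
\]
For the vertical part, $\abs{(y'-x')_2}\le \abs{(y'-z)_2}+\abs{(z-x')_2}\le \abs{y'-z}+\kappa\abs{(z-x')_1}$. Substituting and writing $a=\abs{(z-x')_1}$ gives
\[
	a\le (1+\kappa)\abs{z-y'}+\kappa^2 a.
\]
Because $\kappa<1$, this yields $a\le \frac{1+\kappa}{1-\kappa^2}\abs{z-y'} = \frac{1}{1-\kappa}\abs{z-y'}$. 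Then $\abs{z-x'}\le (1+\kappa)a$, so we can take $\kappa'=\frac{1+\kappa}{1-\kappa}$, up to the comparability constant between $d$ and the Euclidean metric.

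The main obstacle is the slope-to-displacement step for the leaf of $W^1$. The slope bound holds only pointwise on tangent vectors of $E^1$, so we need a path inside the leaf whose horizontal length is comparable to $\abs{(z-x')_1}$. I would handle this by noting that, because the slope is less than $1$, the projection of a local leaf to $\R^{n_1}$ is a local diffeomorphism onto its image. On the small neighborhood this makes the leaf a graph, and the straight horizontal segment lifts to a path in the leaf. For $T$ the argument is identical, with the roles of the factors swapped.
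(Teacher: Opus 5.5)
Your proposal is correct and follows the paper's argument: the paper's sketch notes that if $d(y',\ang{x'}{y'})$ were small compared with $d(x',\ang{x'}{y'})$, then the vectors from $x'$ to $\ang{x'}{y'}$ and from $x'$ to $y'$ would be nearly parallel, which cannot happen when the first has slope at most $\kappa<1$ and the second has coslope at most $\kappa$. Your component-wise triangle inequalities are the quantitative form of this, with explicit constant $\kappa'=\frac{1+\kappa}{1-\kappa}$ up to comparability of $d$ with the Euclidean metric, and you also justify the passage from slope bounds on tangent vectors to slope bounds on chords, which the sketch leaves implicit. For $T$ that justification needs $T$ to be a graph over a convex piece of the vertical factor, as for the reasonable transversals of Pugh--Shub--Wilkinson, since completeness of leaves is not available there.
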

	\begin{proof}[Sketch Proof]
		If $d(y',\ang{x'}{y'})$ is small compared
		with $d(x',\ang{x'}{y'})$, then the vector from $x'$ to $\ang{x'}{y'}$ is similar to the
		vector from $x'$ to  $y'$. However, the first vector must have slope at most  $\kappa < 1$ and the
		second vector must have coslope at most $\kappa < 1$, which is impossible.
	\end{proof} 
	Using Lemma \ref{lem.transverse}, H\"older continuity of the $W^1$-projection to $T$ follows readily:
	\begin{align*}
		d(x',y') & \leq d(x',\ang{x'}{y'}) + d(\ang{x'}{y'},y') \\
						 &\leq (1+\kappa')d(\ang{x'}{y'}, y')\\
						 &= (1+\kappa')d(\ang{x}{y'},\ang{y}{y'}) \quad (\text{since $x \in W^1(x')$, $y \in W^1(y')$})\\
						 &\leq (1+\kappa')C d(x,y)^{\alpha}.
	\end{align*}

It follows from these arguments that Proposition \ref{thmD:detailed} holds. It follows from Corollary \ref{GeodesicFlow} and Proposition \ref{thmD:detailed} that Theorem \ref{Holonomies} holds.
\begin{remark}
This result has particular interest for the Ballmann-Brin-Burns examples. We have shown that the $B^3$ examples have H\"older holonomies, even though it was shown in \cite{BBB} that they do not have H\"older tangent distributions. We recall that \cite[Theorem 4]{HW} says that if the leaves of a foliation $\WWW$ are uniformly $C^2$ and $\WWW$ has H\"older continuous holonomies, then the tangent distribution to $\WWW$ is H\"older. We thus obtain Corollary \ref{cor:B3}: the strong stable foliations $\wgss$ for the $B^3$ example can not be uniformly $C^2$.
\end{remark}

\section{Relationships between definitions of metric Anosov flows}\label{subsec.otherdefinitions}
We offer an alternative formulation of $C^\alpha$ metric Anosov flows in terms of laminations, and we compare with the original definition of Pollicott.
\subsection{Definition using laminations}
Suppose that $Y$ is a compact metric space.
Consider a collection of homeomorphisms
\[
	\mathcal{A} = \{\phi_U = (\phi^1_U,\phi^2_U) : U \to K_U \times L_U\}_{U \in \,\mathcal{U}}
,\]
where $\mathcal{U}$ is an open cover of $Y$, and $L_U$ and $K_U$ are topological spaces that may
depend on $U$. We say that $\mathcal{A}$ is a 
\emph{lamination atlas} if, for any $U,V \in \,\mathcal{U}$, and any $x,y \in U \cap V$, we
have
\[
\phi^1_U(x) = \phi^1_U(y) \iff \phi^1_V(x) = \phi^1_V(y)
.\]
We call each $\phi_U$ a \emph{chart} of the atlas $\mathcal{A}$. 
The level sets of each $\phi^1_U$, which are homeomorphic to $L_U$,
are called \emph{plaques} of the atlas, and the collection of plaques of a lamination atlas is
called a \emph{plaquation}.
The definition of a lamination atlas thus asks that any plaque
inside some $U \in \mathcal{U}$ intersects at most one plaque in another $V \in \mathcal{U}$.
If $\mathcal{A}$ is a lamination atlas, and $x,y$ are in the domain $U$ of a single chart
$\phi_U \in \mathcal{A}$, then we write $x \sim_{\mathcal{A}} y$ if and only if
$\phi^1_U(x) = \phi^1_U(y)$, that is, when $x$ and $y$ are in the same plaque of 
$\mathcal{A}$ inside $U$. This is independent of the choice of $U \in \mathcal{U}$ containing both
$x$ and $y$. We leave the statement $x\sim_{\mathcal{A}} y$ undefined if $x,y$ do not lie in any
single chart of $\mathcal{A}$. Thus we think of $\sim_{\mathcal{A}}$ as a truth function taking
values in $\{0,1\}$ with domain $\{(x,y) \in Y\times Y : x,y \in U = \mathrm{dom}(\phi_U) \text{ for
some } \phi_U \in \mathcal{A}\}$.

Let us call two lamination atlases $\mathcal{A}_1, \mathcal{A}_2$
\emph{compatible} if $\mathcal{A}_1 \cup \mathcal{A}_2$ is a lamination atlas. Note that
$\mathcal{A}_1$ and $\mathcal{A}_2$ are compatible if and only if $\sim_{\mathcal{A}_1}$ and
$\sim_{\mathcal{A}_2}$ agree on the intersection of their domains. Compatibility is not
necessarily an equivalence relation on lamination atlases.
To obtain a true equivalence relation, we need to pass to refinements, in the following
sense. Given open covers 
$\{L^i_U\}_{i\in I_U}$ of $L_U$ and  $\{K^j_U\}_{j \in J_U}$ of $K_U$, and letting
$U_{ij} = \phi_U^{-1}(K^i_U \times L^j_U)$, then
\[
	\mathcal{A}' = \{\left.\phi_{U}\right|_{U_{ij}} \colon U \in \mathcal{U},\, i\in I_U,\, j\in J_U\}
\]
is called a \emph{refinement} of $\mathcal{A}$. Any refinement of a lamination
atlas is a lamination atlas.  We say that $\mathcal{A}_1$ and $\mathcal{A}_2$
are \emph{equivalent} if they have refinements $\mathcal{A}_1'$ and $\mathcal{A}_2'$ such that
$\mathcal{A}_1'$ and $\mathcal{A}_2'$ are compatible. Then 
two atlases $\mathcal{A}_1$ and $\mathcal{A}_2$ are equivalent 
if and only if $\sim_{\mathcal{A}_1}$ and $\sim_{\mathcal{A}_2}$ agree on a neighborhood of
the diagonal of $Y\times Y$, which may be smaller than the intersection of their domains.
This defines an equivalence relation on lamination atlases, which we define to be
a \emph{lamination} $\mathcal{L}$.

Suppose that $\mathcal{L}$ is a lamination, and let $\mathcal{A}$ be an atlas of
$\mathcal{L}$. A \emph{plaque chain} is a sequence of plaques $P_1,\ldots, P_n$ of $\mathcal{A}$
such that $P_i \cap P_{i+1} \neq \emptyset$.
We can associate to $\mathcal{L}$ a global partition $\mathcal{F}$,
whose elements would be called the \emph{leaves} of the lamination,
such that $x,y$ are in the same element of $\mathcal{F}$ if
and only if $x \in P_1$ and $y \in P_n$ for some plaque chain $P_1,\ldots, P_n$. A priori, this
definition of $\mathcal{F}$ depends on the choice of atlas $\mathcal{A}$ of $\mathcal{L}$.
Let us denote it $\mathcal{F}(\mathcal{A})$.
Say that an atlas $\mathcal{A}$ \emph{has connected plaques} if
every plaque of $\mathcal{A}$ is a connected subset of $Y$. If two equivalent atlases $\mathcal{A}$,
$\mathcal{A}'$ both have connected plaques,
then it is possible to show that $\mathcal{F}(\mathcal{A}) = \mathcal{F}(\mathcal{A}')$.
Thus if a lamination $\mathcal{L}$ has at least one atlas with connected plaques,
then $\mathcal{F}$ can be defined consistently by setting $\mathcal{F} =
\mathcal{F}(\mathcal{A})$ for any such choice of $\mathcal{A}$. 
\begin{remark}
Often the plaques are not locally connected or even totally disconnected; then it is generally not 
possible to define any global structure from the lamination alone. For
example, let $Y = \{0,1\}^\Z$, let $\sigma : Y \to Y$ be the shift map, and consider a plaquation of
$Y$ by local stable sets of $\sigma$. Due to the ultrametric property,
$\mathcal{F}(\mathcal{A)}$ is just the partition of
$Y$ into local stable sets whose diameter at each point is the maximum diameter of the plaques of
$\mathcal{A}$ passing through that point. This depends on the particular plaquation used,
and it does not match the expectation that $\mathcal{F}(\mathcal{A})$ is the partition of $Y$ into
global stable sets.
\end{remark}
\begin{remark}
If a lamination $\mathcal{L}$ is contracted by a homeomorphism $f$
at some scale,  one obtains a well-defined global partition $\mathcal{F}$ 
via the equivalence relation
\begin{equation}\label{eq.globalrelation}
	x\, \mathcal{F}\, y \iff f^nx \sim_\mathcal{A} f^ny \text{ for all large enough $n$},
\end{equation}
where $\mathcal{A}$ is any atlas of $\mathcal{L}$. This global partition will depend on the map $f$ in general.
For example, consider $Y = \{0,1\}^\Z$ again with left
shift $\sigma : Y \to Y$, and let $\mathcal{L}$ be the stable lamination of $\sigma$.
Then we do indeed obtain the
usual partition of $Y$ into global stable sets of $\sigma$ by setting $f = \sigma$ in
\eqref{eq.globalrelation}. On the other hand, let $\psi : Y \to Y$ be the homeomorphism
\[
	\psi(x)_i = \begin{cases}
		x_i,& i \leq 0, \\
		x_i+x_0\:\:(\mathrm{mod}\,2),& i\geq 1.
	\end{cases}
\]
Then $\psi$ preserves the stable lamination $\mathcal{L}$,
but not the partition into global stable sets.
The map $\sigma' = \psi^{-1} \circ \sigma \circ \psi$
preserves and contracts $\mathcal{L}$ at a small enough scale,
but it gives rise to a different global partition: indeed, the points
\[
	y = \cdots 00\underline{0}00\cdots, \quad y' = \cdots 00\underline{1}11\cdots
\]
are in the same global stable set of $\sigma'$ but not $\sigma$.
\end{remark}

Given an atlas $\mathcal{A}$ of a lamination $\mathcal{L}$, it is useful to define
\[
	\mathcal{L}(y,\epsilon;\mathcal{A}) \coloneqq \{x \in Y \colon x\sim_\mathcal{A} y \text{ and }
	d(x,y) < \epsilon\}
\]
for each $y \in Y$ and $\epsilon > 0$. Then
$\mathcal{L}(y,\epsilon;\mathcal{A})$ is an open neighborhood of $y$ inside
the union of plaques of $\mathcal{A}$ containing $y$, and if $\epsilon$ is small then
$\mathcal{L}(y,\epsilon;\mathcal{A})$ is an open neighborhood of $y$ inside a single plaque of
$\mathcal{A}$. If $\mathcal{A}_1$ and $\mathcal{A}_2$ are two atlases of the same lamination
$\mathcal{L}$, then there is some $\epsilon > 0$ such
that $\mathcal{L}(y,\epsilon';\mathcal{A}_1) = \mathcal{L}(y,\epsilon';\mathcal{A}_2)$ for all $0 <
\epsilon' \leq \epsilon$ and all $y \in Y$.

We say that a pair of laminations $(\mathcal{L},\mathcal{K})$ is a \emph{transverse pair}
if there is some atlas $\mathcal{A}$ that is simultaneously an atlas for $\mathcal{L}$ and
$\mathcal{K}$, in the sense that $\mathcal{A}$ is a lamination atlas for
$\mathcal{L}$, and the collection $\{(\phi^2_U,\phi_U^1) \colon (\phi_U^1, \phi_U^2)
\in \mathcal{A}\}_{U\in\mathcal{U}}$ is a lamination atlas for $\mathcal{K}$.
Suppose that $\mathcal{A}$ is one such atlas for a transverse pair $(\mathcal{L},\mathcal{K})$.
If $\delta > 0$ is a Lebesgue number of the cover $\mathcal{U}$,
then we obtain a map
\[
	[\,\cdot\,, \,\cdot \,]: (Y \times Y)_\delta \to Y, \qquad [x,y] =
		\phi_U^{-1}(\phi^1_U(x),\phi^2_U(y))
,\]
where $x,y \in U \in \mathcal{U}$.
It is easy to check that this definition is independent of $U$,
and $[x,y]$ is the unique intersection point of an $\mathcal{L}$-plaque through $x$
and a $\mathcal{K}$-plaque through $y$:
\[
	\{[x,y]\} = (\phi^1_U)^{-1}(\phi^1_U(x)) \cap (\phi^2_U)^{-1}(\phi^2_U(y))
.\]
After possibly halving $\delta$, then it can be shown that this map is a local product structure on
$(Y\times Y)_\delta$ in the sense of Definition \ref{def.lps}.

Conversely, suppose that $[\,\cdot\, ,\,\cdot\,]$
is an (abstract) local bracket operation. Then we are able to recover a transverse pair of laminations
$(\mathcal{L},\mathcal{K})$ in the following way. Given $y \in Y$, $\epsilon > 0$, recall the
definitions
\begin{align*}
L_\epsilon(y) &= \{x \in Y \colon x = [x,y] \text{ and } d(x,y) < \epsilon\}, \\
	R_\epsilon(y) &= \{x \in Y \colon x = [y,x] \text{ and } d(x,y) < \epsilon\}.
\end{align*}
If $\epsilon > 0$ is small enough, then
the restriction of the bracket operation to $L_\epsilon(y) \times R_\epsilon(y)$ 
is a homeomorphism onto an open neighborhood $U_{y,\epsilon}$ of $y$ for each $y \in Y$
by Lemma \ref{lem.ruelle}.
Let $\phi_y : U_{y,\epsilon} \to L_\epsilon(y)\times R_\epsilon(y)$ denote the inverse of
this homeomorphism. Possibly shrinking $\epsilon > 0$, then it is straightforward to show from the
local bracket structure definition that
the collection 
\[
	\mathcal{A} = \{\phi_y : U_{y,\epsilon} \to L_\epsilon(y)\times
	R_\epsilon(y)\}_{y \in Y}
\]
is a simultaneous atlas for two laminations $\mathcal{L}$ and
$\mathcal{K}$, which are therefore a transverse pair. For every $0 < \epsilon' \leq \epsilon$,
we have
\[
	L(y,\epsilon') = \mathcal{L}(y,\epsilon';\mathcal{A}), \quad
	K(y,\epsilon') = \mathcal{K}(y,\epsilon';\mathcal{A}) 
.\]
Thus if $\mathcal{A}_\mathcal{L}, \mathcal{A}_\mathcal{K}$ are any other lamination
atlases of $\mathcal{L},\mathcal{K}$, then
\[
	L(y,\epsilon') = \mathcal{L}(y,\epsilon';\mathcal{A}_\mathcal{L}), \quad
	K(y,\epsilon') = \mathcal{K}(y,\epsilon';\mathcal{A}_\mathcal{K})
\]
for every small enough $\epsilon' > 0$. Combining this fact with Lemma \ref{lem.lpscc}, we obtain the
following corresponding lemma.
\begin{lemma}\label{lem.lamcc}
	Suppose that $(\mathcal{L},\mathcal{K})$ is a transverse pair corresponding to a local bracket operation $[\,\cdot\, ,\,\cdot\,]$, and suppose that
	$\mathcal{A}_\mathcal{L}$ and $\mathcal{A}_\mathcal{K}$ are atlases of $\mathcal{L}$ and
	$\mathcal{K}$, respectively. Then for any small enough $\epsilon > 0$, there is some $\delta > 0$
	such that, if $d(x,y) \leq \delta$, then
	\[
		\{[x,y]\} = \mathcal{L}(y,\epsilon;\mathcal{A}_\mathcal{L}) \cap
		\mathcal{K}(y,\epsilon;\mathcal{A}_\mathcal{K})
	.\]
\end{lemma}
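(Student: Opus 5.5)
The plan is to reduce the statement to Lemma~\ref{lem.lpscc} by identifying the sets $L_\epsilon(\cdot)$ and $R_\epsilon(\cdot)$ built from the bracket with the local plaque neighborhoods $\mathcal{L}(\cdot,\epsilon;\mathcal{A}_\mathcal{L})$ and $\mathcal{K}(\cdot,\epsilon;\mathcal{A}_\mathcal{K})$ of the given atlases. In Lemma~\ref{lem.lpscc} the second factor is centred at $x$, namely $\{[x,y]\}=R_\epsilon(x)\cap L_\epsilon(y)$. I therefore read the displayed identity as
\[
\{[x,y]\}=\mathcal{L}(y,\epsilon;\mathcal{A}_\mathcal{L})\cap\mathcal{K}(x,\epsilon;\mathcal{A}_\mathcal{K}),
\]
with the $\mathcal{K}$-neighborhood centred at $x$. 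The argument below proves this form.

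\emph{Step 1: fix a scale where the plaque neighborhoods agree with $L_\epsilon$ and $R_\epsilon$.} Let $\mathcal{A}$ be the simultaneous atlas $\{\phi_y\}_{y\in Y}$ constructed from the bracket via Lemma~\ref{lem.ruelle} at some scale $\epsilon_1$. As noted just before the lemma, for every $0<\epsilon'\le\epsilon_1$ we have
\[
L_{\epsilon'}(y)=\mathcal{L}(y,\epsilon';\mathcal{A}), \qquad R_{\epsilon'}(y)=\mathcal{K}(y,\epsilon';\mathcal{A}).
\]
Next, $\mathcal{A}$ and $\mathcal{A}_\mathcal{L}$ are atlases of the same lamination $\mathcal{L}$. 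By the fact recorded earlier about two atlases of one lamination, there is $\epsilon_2>0$ with $\mathcal{L}(y,\epsilon';\mathcal{A})=\mathcal{L}(y,\epsilon';\mathcal{A}_\mathcal{L})$ for all $\epsilon'\le\epsilon_2$ and all $y\in Y$. Likewise there is $\epsilon_3>0$ with $\mathcal{K}(y,\epsilon';\mathcal{A})=\mathcal{K}(y,\epsilon';\mathcal{A}_\mathcal{K})$ for $\epsilon'\le\epsilon_3$. Setting $\epsilon_0=\min\{\epsilon_1,\epsilon_2,\epsilon_3\}$, for every $\epsilon\le\epsilon_0$ and every $y\in Y$ we obtain
\[
L_\epsilon(y)=\mathcal{L}(y,\epsilon;\mathcal{A}_\mathcal{L}), \qquad R_\epsilon(y)=\mathcal{K}(y,\epsilon;\mathcal{A}_\mathcal{K}).
\]

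\emph{Step 2: apply Lemma~\ref{lem.lpscc}.} Shrink $\epsilon_0$ further if necessary so that every $\epsilon\le\epsilon_0$ is ``small enough'' in the sense of Lemma~\ref{lem.lpscc}. That lemma then provides $\delta>0$ such that $d(x,y)\le\delta$ implies $\{[x,y]\}=R_\epsilon(x)\cap L_\epsilon(y)$. Substituting the identifications from Step~1, with centre $x$ for the $R$-set and centre $y$ for the $L$-set, gives the claimed identity.

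\emph{Main obstacle.} The step needing the most care is the first identification, $L_{\epsilon'}(y)=\mathcal{L}(y,\epsilon';\mathcal{A})$. For a point $x$ with $d(x,y)<\epsilon'$, one must check that $x=[x,y]$ holds exactly when $x$ lies in the same plaque of $\phi_y$ as $y$. Since $\phi_y$ is the inverse of $(a,b)\mapsto[a,b]$ on $L_\epsilon(y)\times R_\epsilon(y)$, this amounts to checking that the first coordinate of $\phi_y(x)$ equals $[x,y]$, and this in turn follows from axioms \ref{item.b} and \ref{item.c} of Definition~\ref{def.lps}. The uniformity in $y$ is already built into the cited facts: compactness of $Y$ gives the uniform scale in Lemma~\ref{lem.ruelle} and the uniform $\epsilon$ in the comparison of atlases, and I would take both as given.
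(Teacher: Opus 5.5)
Your strategy (identify the plaque neighbourhoods with $L_\epsilon$ and $R_\epsilon$, compare atlases, then invoke Lemma~\ref{lem.lpscc}) is the same as the paper's one-line argument. However, the identification in Step~1 is backwards, and as a result the identity you end up proving is false.

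In the chart $\phi_y(x)=([x,y],[y,x])$ from Lemma~\ref{lem.ruelle}, the $\mathcal{L}$-plaques are level sets of the first coordinate. So the $\mathcal{L}$-plaque through $y$ is $\{x : [x,y]=[y,y]=y\}$, not $\{x : x=[x,y]\}$. Axioms (a)--(c) of Definition~\ref{def.lps} give $[x,y]=y \iff x=[y,x]$ and $x=[x,y] \iff [y,x]=y$. Hence
\[
\mathcal{L}(y,\epsilon';\mathcal{A})=R_{\epsilon'}(y),\qquad \mathcal{K}(y,\epsilon';\mathcal{A})=L_{\epsilon'}(y).
\]
The sentence before the lemma uses undefined notation $L(y,\epsilon')$, $K(y,\epsilon')$ and is easy to read as the opposite matching, but the definitions force this one. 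Your ``main obstacle'' paragraph already contains the refutation. Once you know that the first coordinate of $\phi_y(x)$ is $[x,y]$, lying in the $\mathcal{L}$-plaque of $y$ means $[x,y]=y$, not $x=[x,y]$.

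Consequently your reading $\{[x,y]\}=\mathcal{L}(y,\epsilon)\cap\mathcal{K}(x,\epsilon)$ cannot hold. By the transverse-pair property, the unique point of that intersection is $[y,x]$, so you would be asserting $[x,y]=[y,x]$. For the product bracket $[(a,b),(c,d)]=(a,d)$ on $\mathbb{T}^2$ this fails for every $x\neq y$.

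The correct repair of the mistyped display is
\[
\{[x,y]\}=\mathcal{L}(x,\epsilon;\mathcal{A}_\mathcal{L})\cap\mathcal{K}(y,\epsilon;\mathcal{A}_\mathcal{K}).
\]
This matches the paper's description of $[x,y]$ as the intersection of the $\mathcal{L}$-plaque through $x$ with the $\mathcal{K}$-plaque through $y$. It is also the form used in the proof of Lemma~\ref{lem.invariantlps}. With the matching swapped, your Steps~1 and~2 go through verbatim, since Lemma~\ref{lem.lpscc} gives $\{[x,y]\}=R_\epsilon(x)\cap L_\epsilon(y)=\mathcal{L}(x,\epsilon;\mathcal{A}_\mathcal{L})\cap\mathcal{K}(y,\epsilon;\mathcal{A}_\mathcal{K})$.
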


Suppose now that $\FFF : Y \times \R \to Y$ is a continuous flow on $Y$. A lamination $\mathcal{L}$ is \emph{invariant} under $\FFF$
if there is some atlas $\mathcal{A}$ of $\mathcal{L}$ and some $t_0 > 0$
such that $\mathcal{A}$ is compatible with $f_t^*\mathcal{A}$ for every $\abs{t} \leq t_0$, where
$f_t^*\mathcal{A} \coloneqq \{\phi_U \circ f_t \colon \phi_U \in \mathcal{A}\}$.
If $\mathcal{A}$ is one such atlas, then it is easy to see that
for any $\epsilon > 0$, there is some $\epsilon' > 0$ such
that
\[
	f_t\mathcal{L}(y,\epsilon';\mathcal{A}) \subseteq \mathcal{L}(f_ty,\epsilon;\mathcal{A})
\]
for every $\abs{t} \leq t_0$.
Hence, for any other atlas $\mathcal{A}_\mathcal{L}$ of an invariant lamination $\mathcal{L}$, for
any $\epsilon > 0$, there is some $\epsilon' > 0$ such that 
\begin{equation}\label{eq.invariantlam}
	f_t\mathcal{L}(y,\epsilon';\mathcal{A}_\mathcal{L}) \subseteq
	\mathcal{L}(f_ty,\epsilon;\mathcal{A}_\mathcal{L})
\end{equation}
for every $\abs{t} \leq t_0$.
Using this fact, we see that a local bracket operation associated
to invariant laminations must behave well under the flow, in the following way.
\begin{lemma}\label{lem.invariantlps}
	Let $\FFF : Y\times \R$ be a continuous flow, and suppose that
	$[\,\cdot\, ,\,\cdot\,]$ is a local bracket operation on $Y$ corresponding to a transverse pair
	$(\mathcal{L},\mathcal{K})$ such that $\mathcal{L}$ and $\mathcal{K}$  are invariant under the
	flow. Then there is some scale $\delta > 0$ such that, for any $x,y \in Y$ and any $t \geq 0$, then
	\begin{gather*}
		\max_{0\leq s\leq t}d(f_sx,f_sy) \leq \delta \implies f_t[x,y] = [f_tx,f_ty],\\
		\max_{-t\leq s\leq 0}d(f_sx,f_sy) \leq \delta \implies f_{-t}[x,y] = [f_{-t}x,f_{-t}y].
	\end{gather*}
\end{lemma}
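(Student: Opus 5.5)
The plan is to reduce the statement to a short-time claim and then iterate. Fix atlases $\mathcal{A}_\mathcal{L}$, $\mathcal{A}_\mathcal{K}$ of the invariant laminations. Let $t_0>0$ be a common invariance time, so that \eqref{eq.invariantlam} holds for both. By Lemma \ref{lem.lamcc}, choose $\epsilon>0$ small and $\delta_1>0$ such that $d(x,y)\le \delta_1$ implies $\{[x,y]\}=\mathcal{L}(y,\epsilon;\mathcal{A}_\mathcal{L})\cap\mathcal{K}(y,\epsilon;\mathcal{A}_\mathcal{K})$. The plaque neighbourhoods here should be centred at the appropriate points, following the convention that $[x,y]$ is on the $\mathcal{L}$-plaque of $x$ and the $\mathcal{K}$-plaque of $y$. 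Also shrink $\epsilon$ so that \eqref{eq.invariantlam} gives, for some $\epsilon'$ with $\epsilon\le \epsilon'$ replaced appropriately, $f_s\mathcal{L}(x,\epsilon;\mathcal{A}_\mathcal{L})\subseteq \mathcal{L}(f_sx,\epsilon_1;\mathcal{A}_\mathcal{L})$ for $|s|\le t_0$, where $\epsilon_1$ is itself a scale at which Lemma \ref{lem.lamcc} still characterizes the bracket; do the same for $\mathcal{K}$.

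\emph{Short-time step.} Suppose $0\le s\le t_0$ and $d(x,y)\le\delta$ and $d(f_sx,f_sy)\le\delta$, with $\delta\le\delta_1$ to be fixed. Then $[x,y]\in \mathcal{L}(x,\epsilon)\cap\mathcal{K}(y,\epsilon)$. Hence, by invariance, $f_s[x,y]\in\mathcal{L}(f_sx,\epsilon_1)\cap\mathcal{K}(f_sy,\epsilon_1)$. Since $d(f_sx,f_sy)\le\delta$, Lemma \ref{lem.lamcc} at scale $\epsilon_1$ says this intersection is the singleton $\{[f_sx,f_sy]\}$. This gives $f_s[x,y]=[f_sx,f_sy]$. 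The only care needed is to order the choices: first $\epsilon_1$ together with its $\delta$ from Lemma \ref{lem.lamcc}, then $\epsilon$ via \eqref{eq.invariantlam}, then $\delta\le$ the Lemma \ref{lem.lamcc} constant for $\epsilon$ and for $\epsilon_1$. One also needs that $[x,y]$ lies within $\epsilon$ of both $x$ and $y$, which comes from uniform continuity of the bracket (shrink $\delta$).

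\emph{Iteration.} For general $t\ge0$ with $\max_{0\le s\le t}d(f_sx,f_sy)\le\delta$, split $[0,t]$ into steps $0=s_0<s_1<\dots<s_n=t$ of length at most $t_0$. Apply the short-time step to the pair $(f_{s_{i}}x,f_{s_{i}}y)$, both of whose endpoints are $\delta$-close by hypothesis. Inductively we get $f_{s_{i+1}}[x,y]=f_{s_{i+1}-s_i}[f_{s_i}x,f_{s_i}y]=[f_{s_{i+1}}x,f_{s_{i+1}}y]$. The backward statement is identical, using $-t_0\le s\le 0$; invariance was assumed for $|t|\le t_0$, so it covers both directions.

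The main obstacle I expect is the bookkeeping of scales in the short-time step. We must ensure that the plaque neighbourhoods transported by $f_s$ land inside neighbourhoods small enough that Lemma \ref{lem.lamcc} still forces uniqueness, uniformly in $s\in[0,t_0]$ and in the points. This is exactly what \eqref{eq.invariantlam} plus compactness provide, so I would state these choices explicitly at the start.
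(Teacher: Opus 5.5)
Your proposal is correct and is essentially the paper's proof. The paper also proves a short-time step and then inducts: continuity of the bracket puts $[x,y]$ in $\mathcal{L}(x,\epsilon)\cap\mathcal{K}(y,\epsilon)$, \eqref{eq.invariantlam} transports this into $\mathcal{L}(f_sx,\epsilon_1)\cap\mathcal{K}(f_sy,\epsilon_1)$, and Lemma \ref{lem.lamcc} identifies that intersection as $\{[f_sx,f_sy]\}$, with the scales chosen in the same order as yours (and you were right to center the $\mathcal{L}$-neighbourhood at $x$ rather than at $y$ as written in Lemma \ref{lem.lamcc}).
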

\begin{proof}
	We show that there is some $t_0 > 0$ such that the lemma holds assuming $0 \leq t \leq t_0$. The statement extends to all $t \geq 0$ by an inductive argument.

	Let $\mathcal{A}_\mathcal{L}, \mathcal{A}_\mathcal{K}$ be any atlases of $\mathcal{L}$,
	$\mathcal{K}$. Using Lemma \ref{lem.lamcc}, choose
	$\epsilon, \delta > 0$ so that
	$\{[x,y]\} = \mathcal{L}(x,\epsilon;\mathcal{A}_\mathcal{L}) \cap
	\mathcal{K}(y,\epsilon;\mathcal{A}_\mathcal{K})$ whenever $d(x,y) \leq \delta$.
	Using \eqref{eq.invariantlam}, we let $0 < \epsilon' \leq \epsilon$ and $t_0 > 0$ 
	be small enough so that
	\[
		f_t\mathcal{L}(y,\epsilon';\mathcal{A}_\mathcal{L}) \subseteq
		\mathcal{L}(f_ty,\epsilon;\mathcal{A}_\mathcal{L}), \quad
		f_t\mathcal{K}(y,\epsilon';\mathcal{A}_\mathcal{K}) \subseteq
		\mathcal{K}(f_ty,\epsilon;\mathcal{A}_\mathcal{K})
	\]
	for every $\abs{t} \leq t_0$ and $y \in Y$. Possibly shrinking $\delta$ and using continuity of
	the local product structure, we additionally assume that for any $x,y \in Y$,
	\[
		d(x,y) \leq \delta \implies d(x,[x,y]) < \epsilon' \text{ and } d(y,[x,y]) < \epsilon'.
	\]

	Suppose that $0 \leq t \leq t_0$, and let $x,y \in Y$ satisfy 
	$\max_{0\leq s \leq t}d(f_sx,f_sy) \leq \delta$. Then
	\[
		[x,y] \in
		\mathcal{L}(x,\epsilon;\mathcal{A}_\mathcal{L}) \cap
		\mathcal{K}(y,\epsilon;\mathcal{A}_\mathcal{K})
	.\]
	Since $d(x,[x,y])$ and $d(y,[x,y])$ are both less than $\epsilon'$, then
	\[
		[x,y] \in
		\mathcal{L}(x,\epsilon';\mathcal{A}_\mathcal{L}) \cap
		\mathcal{K}(y,\epsilon';\mathcal{A}_\mathcal{K})
	,\]
	and hence
	\[
		f_t[x,y] \in \mathcal{L}(f_tx,\epsilon;\mathcal{A}_\mathcal{L}) \cap
		\mathcal{K}(f_ty,\epsilon;\mathcal{A}_\mathcal{K})
	.\]
	Therefore $f_t[x,y] = [f_tx,f_ty]$. The proof of the second statement of the lemma is
	then similar.
\end{proof}

Given an invariant lamination $\mathcal{L}$, we say that $\mathcal{L}$ is
\emph{transverse to the flow} if $\mathcal{L}$ has an atlas of the form
\[
\mathcal{A} = \{\phi_U : U \to (K_U \times (-\epsilon_U,\epsilon_U ))\times L_U\}_{U \in \mathcal{U}},
\]
such that $\phi^{-1}_U(k,t+s,l) = f_s(\phi^{-1}_U(k,t,l))$ whenever 
$t,t+s\in (-\epsilon_U,\epsilon_U)$. Note that this definition requires $f$ to have no fixed points, and
after assuming without loss of generality that each $\epsilon_U$ is small enough,
then $\mathcal{A}$ is also a lamination atlas for the lamination of $Y$ by $f$-orbits.
Furthermore, then $\mathcal{A}$ serves as an atlas for a lamination
$\mathcal{L}^\mathrm{c}$ which saturates $\mathcal{L}$ with the flow direction. That is,
$\mathcal{L}^\mathrm{c}$ is defined to be the lamination with atlas $\mathcal{A}$ and plaques of
the form $\phi_U^{-1}(\{k_0\} \times (-\epsilon_U,\epsilon_U) \times L_U)$ for 
$U \in \mathcal{U}$, $k_0 \in K_U$. It is clear from the definition of $\mathcal{A}$ that, for every
$\epsilon > 0$, there is some $\epsilon' > 0$ such that
\[
\mathcal{L}^\mathrm{c}(y,\epsilon';\mathcal{A}) 
\subseteq \bigcup_{\abs{t}\leq\epsilon}f_t\mathcal{L}(y,\epsilon;\mathcal{A}), \quad
\bigcup_{\abs{t}\leq\epsilon'}f_t\mathcal{L}(y,\epsilon';\mathcal{A})
\subseteq \mathcal{L}^\mathrm{c}(y,\epsilon;\mathcal{A})
\]
for each $y \in Y$.
Hence, given any atlases $\mathcal{A}_\mathcal{L},\mathcal{A}_{\mathcal{L}^\mathrm{c}}$ of
$\mathcal{L},\mathcal{L}^\mathrm{c}$, then for every $\epsilon > 0$, there is some
$\epsilon' > 0$ such that for every $y \in Y$,
\begin{equation}\label{eq.center}
	\mathcal{L}^\mathrm{c}(y,\epsilon';\mathcal{A}_{\mathcal{L}^\mathrm{c}}) 
	\subseteq \bigcup_{\abs{t}\leq\epsilon}f_t\mathcal{L}(y,\epsilon;\mathcal{A}_{\mathcal{L}}),
	\quad \bigcup_{\abs{t}\leq\epsilon'}f_t\mathcal{L}(y,\epsilon';\mathcal{A}_{\mathcal{L}})
	\subseteq \mathcal{L}^\mathrm{c}(y,\epsilon;\mathcal{A}_{\mathcal{L}^\mathrm{c}}).
\end{equation}

Using all of the terminology we have introduced so far, we give an equivalent 
definition of $C^\alpha$ metric Anosov flows similar to the one in \cite{BCLS}.

\begin{lemma}
	Let $\FFF : Y \times \R \to Y$ be a continuous flow on a compact metric space. Then $\FFF$ is a
	$C^\alpha$ (resp.\ Lipschitz) metric Anosov flow if and only if $\FFF$ satisfies \eqref{eq.HF} and
	\eqref{eq.HS} with exponent $\alpha$ (resp.\ exponent $\alpha = 1$)
	and there are invariant laminations $\mathcal{W}^\mathrm{ss}$, $\mathcal{W}^\mathrm{uu}$ such that
	\begin{itemize}
		\item $\mathcal{W}^\mathrm{uu}$ and $\mathcal{W}^{\mathrm{ss}}$ are transverse to the flow.
		\item $(\mathcal{W}^\mathrm{uu},\mathcal{W}^\mathrm{s})$ is a transverse pair, where
			$\mathcal{W}^\mathrm{s} = (\mathcal{W}^\mathrm{ss})^\mathrm{c}$.
		\item $(\mathcal{W}^\mathrm{ss},\mathcal{W}^\mathrm{u})$ is a transverse pair, where
			$\mathcal{W}^\mathrm{u} = (\mathcal{W}^\mathrm{uu})^\mathrm{c}$.
		\item There exist $C,\lambda > 0$ and a plaquation $\mathcal{P}^\mathrm{uu}$ of
			$\mathcal{W}^\mathrm{uu}$ such that, for any $x,y \in P \in
			\mathcal{P}^{\mathrm{uu}}$ and any $t \geq 0$, then
			\[
				d(f_{-t}x,f_{-t}y) \leq Ce^{-\lambda t}d(x,y)^\alpha
			.\]
		\item There exist $C, \lambda > 0$ and a plaquation $\mathcal{P}^\mathrm{ss}$
			of $\mathcal{W}^\mathrm{ss}$ such that, for any ${x,y \in P \in \mathcal{P}^{\mathrm{ss}}}$ and
			any $t \geq 0$, then
			\[
				d(f_tx,f_ty) \leq Ce^{-\lambda t}d(x,y)^\alpha
			.\]
	\end{itemize}
\end{lemma}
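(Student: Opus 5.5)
We prove the two implications separately, with the Bowen bracket as the link: Lemma \ref{lem.lamcc} and Lemma \ref{lem.ruelle} pass between local bracket operations and transverse pairs of laminations.

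\emph{Forward direction.} Assume $\FFF$ is a $C^\alpha$ metric Anosov flow. Then \eqref{eq.HF} and \eqref{eq.HS} hold by definition. By Lemma \ref{lem.lps}, both $\ang{\cdot}{\cdot}$ and $\ang{\cdot}{\cdot}^\iota$ are abstract local bracket operations, and Corollaries \ref{MainTheorem} and \ref{ComplementaryResult} make them continuous.

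We build the atlases from Lemma \ref{lem.homeo} and Corollary \ref{cor.lpstau}. The charts $\Phi_y^{-1}: U(y,\tau)\to \wuu_\tau(y)\times[-\tau,\tau]\times\wss_\tau(y)$ have the product form $(K_U\times(-\epsilon_U,\epsilon_U))\times L_U$ after restricting $t$ to an open interval. Their plaques in the third and first factor are the local $\wss$ and $\wuu$ sets. The compatibility condition of a lamination atlas follows from Lemma \ref{lem.subset}: at small scales, two local strong stable sets that meet coincide near the intersection point.

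The identity $\Phi_y(x,t,z)=f_t\ang{x}{z}$ gives transversality to the flow. Invariance follows because $f_t$ maps $\wss_{\epsilon'}(y)$ into $\wss_\epsilon(f_ty)$ for $|t|\le t_0$, by uniform continuity. Since $\Phi_y$ is a homeomorphism, the pairs $(\mathcal W^{ss},\mathcal W^{u})$ and, via $\ang{\cdot}{\cdot}^\iota$, $(\mathcal W^{uu},\mathcal W^{s})$ are transverse pairs. For the plaquations we take plaques $\wss_\tau(y)$ and $\wuu_\tau(y)$ with $\tau\le\eta$, so the contraction estimates are exactly \eqref{eq.HW}.

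\emph{Converse.} Assume \eqref{eq.HF}, \eqref{eq.HS} and the lamination data. We must verify canonical coordinates and \eqref{eq.HW}.

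\textbf{Identifying plaques with local strong sets.} The key step is to show that for small $\epsilon$ there is $\epsilon'$ with
\[
\mathcal W^{ss}(y,\epsilon';\mathcal A)\subseteq \wss_\epsilon(y)
\quad\text{and}\quad
\wss_{\epsilon'}(y)\subseteq\mathcal W^{ss}(y,\epsilon;\mathcal A).
\]
The first inclusion follows from the plaque contraction estimate, provided the forward images of a small plaque piece stay in a plaque. To see this, use invariance \eqref{eq.invariantlam} inductively in steps of $t_0$, together with the contraction bound, which keeps the images small.

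The second inclusion is the main obstacle. Let $x\in\wss_{\epsilon'}(y)$ and set $w=[x,y]$ for the bracket of the pair $(\mathcal W^{ss},\mathcal W^u)$. Lemma \ref{lem.invariantlps} gives $f_tw=[f_tx,f_ty]$ for all $t\ge0$. The point $f_tw$ lies in a $\mathcal W^{u}$-plaque of $f_ty$ and, by the first inclusion, approaches $f_tx$, hence approaches $f_ty$. Decompose $\mathcal W^u$ locally via \eqref{eq.center} as a flow segment of size $|s|\le\epsilon$ times a $\mathcal W^{uu}$-plaque. Backward contraction along $\mathcal W^{uu}$ means forward expansion, so a nontrivial $\mathcal W^{uu}$-component would grow under $f_t$ until it reached a fixed size, contradicting $f_tw\to f_ty$. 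A nonzero flow component is likewise ruled out: by \eqref{eq.HS} it would keep $d(f_tw,f_ty)$ bounded below. Hence $w=y$ and $x\in\mathcal W^{ss}$-plaque of $y$. I expect making this expansion argument uniform to need care, via a Lemma \ref{lem.invariantlps}-type induction. The same argument applies to $\mathcal W^{uu}$ and $\wuu$.

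\textbf{Conclusion.} Once the plaques are identified with the local strong sets, \eqref{eq.HW} is the plaquation estimate restricted to $\wss_\eta$, after shrinking $\eta$. Canonical coordinates follow from Lemma \ref{lem.lamcc} applied to $(\mathcal W^{ss},\mathcal W^u)$. There, $\mathcal W^u(y,\epsilon)$ is identified through \eqref{eq.center} with $\bigcup_{|v|\le\epsilon}\wuu_\epsilon(f_vy)$. Uniqueness of $v$ comes from transversality to the flow together with the lower bound in \eqref{eq.HS}.
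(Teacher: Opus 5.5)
\textbf{Forward direction: the $\mathcal W^{uu}$ atlas is wrong.} You claim that the first-factor fibres of $\Phi_y^{-1}$ are local $\wuu$ sets. These are the sets $\{\ang{f_tx}{z}\colon x\in\wuu_\tau(y)\}$ for fixed $(t,z)$, and the claim fails in general. Such a set is the image of a piece of $\wuu(f_ty)$ under the ss-holonomy into $\wu(z)$. The ss-holonomy does not carry strong unstable sets into strong unstable sets unless $\wss$ and $\wuu$ are jointly integrable.

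Concretely, take a hyperbolic surface, with $f_t$ right multiplication by $\mathrm{diag}(e^{t/2},e^{-t/2})$ on $\mathrm{PSL}(2,\R)$. Let $x=y\left(\begin{smallmatrix}1&0\\ a&1\end{smallmatrix}\right)\in\wuu(y)$ and $z=y\left(\begin{smallmatrix}1&b\\ 0&1\end{smallmatrix}\right)\in\wss(y)$. A direct computation gives $\ang{x}{z}\in\wuu(f_{2\log(1-ab)}z)$. For $b\neq0$ the time shift varies with $a$, so the fibre crosses a one-parameter family of strong unstable sets.

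So $\Phi_y^{-1}$ is a simultaneous atlas only for $\mathcal W^{ss}$ and $\mathcal W^{u}$: its third-factor fibres lie in $\wss$, and the level sets of its third coordinate lie in $\wu$. It cannot show that $\mathcal W^{uu}$ is transverse to the flow. Nor can it produce a plaquation $\mathcal P^{uu}$ by $\wuu$ plaques on which to apply \eqref{eq.HW}.

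The repair is the paper's ``symmetrical argument'': apply Lemma \ref{lem.homeo} to the reversed flow $f^\iota$. That is, use the charts $(z,t,x)\mapsto\ang{f_{-t}z}{x}^\iota$ with $z\in\wss_\tau(y)$ and $x\in\wuu_\tau(y)$. Their fibres over fixed $(z,t)$ lie in $\wuu(f_{-t}z)$. They also give the transverse pair $(\mathcal W^{uu},\mathcal W^{s})$ that you already attribute to $\ang{\cdot}{\cdot}^\iota$.

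\textbf{Converse.} This is essentially the paper's argument. You use the same two inclusions, Lemma \ref{lem.invariantlps}, \eqref{eq.center}, backward contraction in $\mathcal W^{uu}$, and then eliminate the flow shift. Using the lower bound in \eqref{eq.HS} for that last step is a fine substitute for the paper's no-fixed-points bound.

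The uniformity you worry about disappears if you argue directly rather than by contradiction. Fix $t\ge0$. Lemma \ref{lem.invariantlps} puts $f_tw$ in a $\mathcal W^u$-plaque of $f_ty$ of size $\tau'$. Then \eqref{eq.center} gives $\abs{v_t}\le\tau$ with $f_{t+v_t}w$ in a $\mathcal W^{uu}$-plaque of $f_ty$ of size $\tau$. The plaque estimate, applied from time $t$ back to time $0$, gives $d(f_{v_t}w,y)\le Ce^{-\lambda t}\tau^\alpha$. A limit point $v$ of $(v_t)$ then satisfies $f_vw=y$, and $v=0$ follows.

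No component has to be tracked along the orbit, and $f_tw\to f_ty$ is needed only to rule out $v\neq0$. Likewise, your first inclusion needs no invariance argument, since the plaquation estimate already holds for all $t\ge0$.
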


\begin{proof}
	If $f$ is a $C^\alpha$ metric Anosov flow, then it satisfies \eqref{eq.HF} and \eqref{eq.HS} by
	definition. The Bowen brackets $\ang{\,\cdot\,}{\,\cdot\,}$ and $\ang{\,\cdot\,}{\,\cdot\,}^\iota$
	are local product structures by Lemma \ref{lem.lps}, and hence they give rise to transverse pairs
	of laminations, $(\mathcal{W}^\mathrm{ss},\mathcal{W}^\mathrm{u})$ and
	$(\mathcal{W}^\mathrm{uu},\mathcal{W}^\mathrm{s})$. 
	For each $\ell \in \{\mathrm{uu},\mathrm{ss},\mathrm{u},\mathrm{s}\}$, then the (sufficiently small)
	plaques of $\mathcal{W}^{\ell}$ are open subsets of the sets $W^\ell_\epsilon(y;d)$ for $y \in Y$,
	where $\epsilon > 0$ is sufficiently small. Thus it is 
	immediate from \eqref{eq.HW} and Lemma \ref{lem.subset} that there are sufficiently fine
	plaquations $\mathcal{P}^\mathrm{uu}, \mathcal{P}^{\mathrm{ss}}$ of
	$\mathcal{W}^\mathrm{uu},\mathcal{W}^\mathrm{ss}$ that satisfy the required H\"older contraction
	estimates of the lemma.
	
	The inverses of the homeomorphisms from Lemma \ref{lem.homeo} can be used to show
	that $\mathcal{W}^\mathrm{ss}$ is transverse to the flow with $\mathcal{W}^\mathrm{s} =
	(\mathcal{W}^\mathrm{ss})^\mathrm{c}$, and by a symmetrical argument, $\mathcal{W}^\mathrm{uu}$ is
	transverse to the flow with $\mathcal{W}^\mathrm{u} = (\mathcal{W}^\mathrm{uu})^\mathrm{c}$. This
	proves the forward implication.

	In the other direction, suppose that $f$ satisfies \eqref{eq.HF} and \eqref{eq.HS} with exponent
	$\alpha$, and suppose that there are laminations $\mathcal{W}^\mathrm{ss},
	\mathcal{W}^\mathrm{uu}$ satisfying the conditions in the statement of the lemma.
	To show that $f$ is a $C^\alpha$ metric Anosov flow, we have to check that $f$ satisfies
	\eqref{eq.HW} and that $f$ has canonical coordinates.

	Fix atlases $\mathcal{A}^\mathrm{ss},\mathcal{A}^\mathrm{uu}$ of $\mathcal{W}^\mathrm{ss},
	\mathcal{W}^\mathrm{uu}$. First, we show that for every $\epsilon > 0$, there is some
	$\epsilon' > 0$ such that
	\begin{equation}\label{eq.inclusions1}
		\begin{split}
			\mathcal{W}^\mathrm{ss}(y,\epsilon';\mathcal{A}^\mathrm{ss}) \subseteq \wss_\epsilon(y; d), \quad
			\mathcal{W}^\mathrm{uu}(y,\epsilon';\mathcal{A}^\mathrm{uu}) \subseteq \wuu_\epsilon(y; d), \\
			\wss_{\epsilon'}(y;d) \subseteq \mathcal{W}^\mathrm{ss}(y,\epsilon;\mathcal{A}^\mathrm{ss}), \quad
			\wuu_{\epsilon'}(y;d) \subseteq \mathcal{W}^\mathrm{uu}(y,\epsilon;\mathcal{A}^\mathrm{uu})
		\end{split}
	\end{equation}
	for every $y \in Y$.
  By assumption, there exist $C,\lambda,\eta > 0$ such
	that, if $x \in \mathcal{W}^\mathrm{ss}(y,\eta\,;\mathcal{A}^\mathrm{ss})$ and $x' \in
	\mathcal{W}^\mathrm{uu}(y,\eta\,;\mathcal{A}^\mathrm{uu})$, then
	\begin{equation}\label{eq.shrinking}
		d(f_{t}x, f_{t}y)\leq Ce^{-\lambda t}d(x,y)^\alpha, \quad
		d(f_{-t}x', f_{-t}y)\leq Ce^{-\lambda t}d(x',y)^\alpha \quad
	\end{equation}
	for all $t \geq 0$. 
	Thus, given $\epsilon > 0$,
	the first two inclusions of \eqref{eq.inclusions1} are obvious after choosing 
	$0 < \epsilon' \leq \eta$ such that, for any $x,y \in Y$, 
	\[
		d(x,y) < \epsilon' \implies \max_{t \in [-S,S]}d(f_tx,f_ty) \leq \epsilon, \quad \text{where }
		S = -\tfrac{1}{\lambda}\log \tfrac{\epsilon}{C}
	.\]

	Now we prove the second two inclusions.
	Let $\mathcal{W}^\mathrm{u} = (\mathcal{W}^\mathrm{uu})^\mathrm{c}$ and $\mathcal{W}^\mathrm{s} =
	(\mathcal{W}^\mathrm{ss})^\mathrm{c}$, and fix atlases
	$\mathcal{A}^\mathrm{u},\mathcal{A}^\mathrm{s}$ of
	$\mathcal{W}^\mathrm{u},\mathcal{W}^\mathrm{s}$. By assumption, the pairs
	$(\mathcal{W}^\mathrm{ss},\mathcal{W}^\mathrm{u})$ and
	$(\mathcal{W}^\mathrm{uu},\mathcal{W}^\mathrm{s})$
	are transverse, and so they define local product structures $[\,\cdot\, ,\,\cdot\,]$ and
	$[\,\cdot\, ,\,\cdot\,]^\iota$, respectively.
	Suppose that $\epsilon > 0$. Since $f$ has no fixed points, we can choose $\tau > 0$
	such that, for any $0 < v \leq \tau$, there is some $r > 0$ such that $d(f_vy,y) \geq r$ for every
	$y \in Y$. Using \eqref{eq.center}, choose $\tau' \leq \min\{\epsilon,\eta\}$ such  that
	\[
		\mathcal{W}^\mathrm{u}(y,\tau';\mathcal{A}^\mathrm{u}) \subseteq \bigcup_{\abs{v}\leq
		\tau}f_v\mathcal{W}^\mathrm{uu}(y,\tau;\mathcal{A}^\mathrm{uu})
	\]
	for any $y \in Y$. Since $\mathcal{W}^\mathrm{ss}$ and $\mathcal{W}^\mathrm{u}$ 
	are invariant, we can choose a scale $\delta > 0$ from 
	Lemma \ref{lem.invariantlps} for the local product structure $[\,\cdot\, ,\,\cdot\,]$.
	Using Lemma \ref{lem.lamcc}, let  $0 < \epsilon' \leq \delta$ be
	small enough so that $d(x,y) \leq \epsilon'$ implies that, for any $x,y \in Y$, we have
	\[
		[x,y] \in \mathcal{W}^\mathrm{ss}(x,\tau';\mathcal{A}^\mathrm{ss})
		\cap\mathcal{W}^\mathrm{u}(y,\tau';\mathcal{A}^\mathrm{u}).
	\]

	Suppose that $y' \in \wss_{\epsilon'}(y;d)$. Let $z = [y,y']$.
	By Lemma \ref{lem.invariantlps}, then for each $t \geq 0$ we have $f_tz = [f_ty,f_ty']$, and 
	in particular,
	\[
		f_tz \in \mathcal{W}^\mathrm{ss}(f_ty,\tau';\mathcal{A}^\mathrm{ss})
		\cap \mathcal{W}^\mathrm{u}(f_ty',\tau';\mathcal{A}^\mathrm{u})
	.\]
	For each $t \geq 0$, we can choose some $\abs{v_t} \leq \tau$ such that
	\[
		f_{t+v_t}z \in \mathcal{W}^\mathrm{uu}(f_ty',\tau;\mathcal{A}^\mathrm{uu})
	,\]
	and by \eqref{eq.shrinking}, we have
	\[
		d(f_{v_t}z, y') \leq Ce^{-\lambda t}\tau^\alpha
	\]
	for every $t \geq 0$. Passing to a subsequence, we conclude that there is some
	$\abs{v} \leq \tau$ such that $f_vz = y'$. But since $z \in
	\mathcal{W}^\mathrm{ss}(y,\tau';\mathcal{A}^\mathrm{ss})$, then by \eqref{eq.shrinking}, we have
	$\lim_{t\to\infty}d(f_tz,f_ty) = 0$.
	By assumption we also have $\lim_{t\to\infty}d(f_ty,f_ty') = 0$, and
	hence $\lim_{t\to\infty}d(f_tz,f_ty') = \lim_{t\to\infty} d(f_tz,f_{t+v}z) = 0$. Since
	$\abs{v}\leq \tau$, then this implies that $v = 0$. Therefore 
	\[
		y' = z \in \mathcal{W}^\mathrm{ss}(y,\tau';\mathcal{A}^\mathrm{ss})
		\subseteq \mathcal{W}^\mathrm{ss}(y,\epsilon;\mathcal{A}^\mathrm{ss})
	.\]
	Since $y'$ was arbitrary, we have shown that
	\[
		\wss_{\epsilon'}(y;d) \subseteq \mathcal{W}^\mathrm{ss}(y,\epsilon;\mathcal{A}^\mathrm{ss})
	.\]
	The proof of the last inclusion in \eqref{eq.inclusions1} 
	then follows by a symmetric argument.

	By \eqref{eq.inclusions1} and \eqref{eq.shrinking}, then it is clear that $f$ satisfies
	\eqref{eq.HW}. Using \eqref{eq.invariantlam}, \eqref{eq.center}, and \eqref{eq.inclusions1}  we
	also see that, for any $\epsilon > 0$, there is some $\epsilon' > 0$ such that
	\begin{equation}\label{eq.inclusions2}
		\begin{split}
			\mathcal{W}^\mathrm{s}(y,\epsilon';\mathcal{A}^\mathrm{s}) \subseteq \ws_\epsilon(y; d), \quad
			\mathcal{W}^\mathrm{u}(y,\epsilon';\mathcal{A}^\mathrm{u}) \subseteq \wu_\epsilon(y; d), \\
			\ws_{\epsilon'}(y;d) \subseteq \mathcal{W}^\mathrm{s}(y,\epsilon;\mathcal{A}^\mathrm{s}), \quad
			\wu_{\epsilon'}(y;d) \subseteq \mathcal{W}^\mathrm{u}(y,\epsilon;\mathcal{A}^\mathrm{u})
		\end{split}
	\end{equation}
	for every $y \in Y$. Hence, by Lemma \ref{lem.lpscc}, \eqref{eq.inclusions1}, and
	\eqref{eq.inclusions2}, then $f$ must have canonical coordinates as well. This proves that 
	$f$ is a $C^\alpha$ metric Anosov flow.
\end{proof}

\subsection{Metric Anosov flows in the sense of Pollicott} We recall the original definition of metric Anosov flow, in the equivalent form provided by \cite{BCLS}.
\begin{definition}
	Let $(Y,d)$ be a compact metric space.
	We say that a continuous flow $\FFF : Y \times \R \to Y$ is a \emph{metric Anosov flow (in the sense of
	Pollicott)} if there exist invariant laminations $\mathcal{W}^\mathrm{ss}$,
	$\mathcal{W}^\mathrm{uu}$ such that
	\begin{itemize}
		\item $\mathcal{W}^\mathrm{uu}$ and $\mathcal{W}^{\mathrm{ss}}$ are transverse to the flow.
		\item $(\mathcal{W}^\mathrm{uu},\mathcal{W}^\mathrm{s})$ is a transverse pair, where
			$\mathcal{W}^\mathrm{s} = (\mathcal{W}^\mathrm{ss})^\mathrm{c}$.
		\item $(\mathcal{W}^\mathrm{ss},\mathcal{W}^\mathrm{u})$ is a transverse pair, where
			$\mathcal{W}^\mathrm{u} = (\mathcal{W}^\mathrm{uu})^\mathrm{c}$.
		\item There exist $C,\lambda > 0$ and a plaquation $\mathcal{P}^\mathrm{uu}$ of
			$\mathcal{W}^\mathrm{uu}$ such that, for any $x,y \in P \in
			\mathcal{P}^{\mathrm{uu}}$ and any $t \geq 0$, then
			\[
				d(f_{-t}x,f_{-t}y) \leq Ce^{-\lambda t}d(x,y)
			.\]
		\item There exist $C, \lambda > 0$ and a plaquation $\mathcal{P}^\mathrm{ss}$
			of $\mathcal{W}^\mathrm{ss}$ such that, for any ${x,y \in P \in \mathcal{P}^{\mathrm{ss}}}$ and
			any $t \geq 0$, then
			\[
				d(f_tx,f_ty) \leq Ce^{-\lambda t}d(x,y)
			.\]
	\end{itemize}
\end{definition}

\begin{corollary}\label{cor.comparison}
	If $\FFF$ is a metric Anosov flow in the sense of Pollicott, and $\FFF$ satisfies \eqref{eq.HF} and
	\eqref{eq.HS} with exponent $\alpha \in (0,1)$ (resp.\ $\alpha = 1$), then $\FFF$ is a $C^\alpha$
	(resp.\ Lipschitz) metric Anosov flow in the sense of Definition \ref{def.Calpha}.
	Conversely, if $\FFF$ is a $C^\alpha$ Anosov flow,
	and $\FFF$ satisfies \eqref{eq.HW} with exponent $\alpha = 1$, then $\FFF$ is
	a metric Anosov flow in the sense of Pollicott. In particular, any Lipschitz metric Anosov flow in
	our sense satisfies Pollicott's definition.
\end{corollary}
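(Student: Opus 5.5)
Both directions follow by comparing Pollicott's definition with the lamination characterization of $C^\alpha$ metric Anosov flows in the preceding lemma. The two lists of conditions coincide except in two places: the lemma additionally assumes \eqref{eq.HF} and \eqref{eq.HS} with exponent $\alpha$, and the Hölder contraction along plaques is written with $d(x,y)^\alpha$ instead of $d(x,y)$. So the proof comes down to checking these discrepancies.

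\emph{Forward direction.} Let $\FFF$ be metric Anosov in the sense of Pollicott, and assume it satisfies \eqref{eq.HF} and \eqref{eq.HS} with exponent $\alpha$. The invariant laminations $\mathcal{W}^\mathrm{ss},\mathcal{W}^\mathrm{uu}$ satisfy the same transversality and transverse-pair conditions. Pollicott's Lipschitz contraction $d(f_tx,f_ty)\le Ce^{-\lambda t}d(x,y)$ on plaques gives the Hölder version. To see this, refine the plaquation so that all plaques have diameter at most $1$. Then $d(x,y)\le d(x,y)^\alpha$, since $\alpha\le 1$. Refining a plaquation keeps it a plaquation of the same lamination, because refinements are equivalent atlases. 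The lemma then gives that $\FFF$ is a $C^\alpha$ metric Anosov flow, or a Lipschitz one when $\alpha=1$.

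\emph{Converse.} Let $\FFF$ be a $C^\alpha$ metric Anosov flow whose \eqref{eq.HW} holds with exponent $1$. The forward implication of the lemma produces invariant laminations $\mathcal{W}^\mathrm{ss},\mathcal{W}^\mathrm{uu}$ with the required transversality and transverse-pair properties. In that proof, the plaquations are chosen fine enough that each plaque lies inside a local set $\wss_\eta(y)$ or $\wuu_\eta(y)$; this uses Lemma \ref{lem.subset}.

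The step needing care is that a plaque condition concerns an arbitrary pair $x,y$ in the same plaque, whereas \eqref{eq.HW} is stated for $x\in\wss_\eta(y)$. So one must check that any two points of a sufficiently small plaque satisfy $x\in\wss_\eta(y)$. This follows from Lemma \ref{lem.subset} together with the triangle inequality for the forward orbit distances, after shrinking the plaques. With this in place, \eqref{eq.HW} with exponent $1$ gives exactly Pollicott's Lipschitz contraction estimates, so $\FFF$ is metric Anosov in the sense of Pollicott.

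The final sentence of the corollary is the special case of the converse with $\alpha=1$: a Lipschitz metric Anosov flow in our sense has \eqref{eq.HW} with exponent $1$ by definition.
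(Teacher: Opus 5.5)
Your proposal is correct and matches the paper's implicit argument: the corollary follows by comparing Pollicott's definition with the lamination characterization lemma. In the forward direction, Lipschitz plaque contraction implies the $d(x,y)^\alpha$ version. In the converse, \eqref{eq.HW} with exponent $1$ is applied to plaques lying in small local strong stable/unstable sets, and your triangle-inequality check that two points of a small plaque lie in each other's $\wss_\eta$ or $\wuu_\eta$ is exactly the detail the paper leaves unstated. One small simplification: instead of refining the plaquation, you can use compactness to absorb $d(x,y)^{1-\alpha}\le \max\{1,\diam(Y)\}^{1-\alpha}$ into the constant.
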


\begin{remark}\label{rem.weaker}
	Some authors use the following weaker version of \eqref{eq.HW}:
	\begin{equation}\label{eq.HW0}
		d(f_tx,f_ty) \leq Ce^{-\lambda t}, \quad d(f_{-t}x',f_{-t}y) \leq Ce^{-\lambda t}
		\tag{HW\textsubscript{0}}
	\end{equation}
	whenever $x \in \wss_\eta(y)$, $x' \in \wuu_\eta(y)$, and $t \geq 0$.
	Assuming that the flow satisfies \eqref{eq.HF}, then it is straightforward to check that
	\eqref{eq.HW0} implies \eqref{eq.HW}. 

	To see this, suppose that $x \in \wss_\eta(y)$.
	Then $f_{-T}x \in \wss_\eta(f_{-T}y)$,
	where $T = \max\{-\frac{\alpha}{k}\log d(x,y)+\frac{1}{k}\log\eta,0\}$, so for each $t \geq 0$ we have
	\[
		d(f_tx,f_ty) \leq Ce^{-\lambda(t+T)} \leq C \eta^{\frac{\lambda}{k}}
		e^{-\lambda t}d(x,y)^{\frac{\lambda\alpha}{k}}
	,\] 
	and similarly for $x' \in \wuu_\eta(y)$.
\end{remark}

\begin{lemma}\label{lem.pollicott}
	Let $\FFF$ is a $C^\alpha$ metric Anosov flow on $(Y,d)$. There is a H\"older equivalent metric $d'$ so that $\FFF$ is metric Anosov on $(Y, d')$ in the sense of Pollicott. \end{lemma}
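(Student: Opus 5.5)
Take $d' = d^{\alpha}$ when $\alpha<1$; for $\alpha = 1$ there is nothing to do beyond Corollary \ref{cor.comparison}. By Corollary \ref{cor.comparison} it is enough to show that $\FFF$ on $(Y,d')$ is a $C^\beta$ metric Anosov flow for some $\beta$ and satisfies \eqref{eq.HW} with exponent $1$. We use the power metric $d_\theta = d^\theta$ for a small $\theta\in(0,1]$; it is a metric because $t\mapsto t^\theta$ is concave. It is H\"older equivalent to $d$, and $d_\theta(x,y)\to 0$ exactly when $d(x,y)\to 0$. Hence the strong stable and unstable sets are unchanged, and the local sets at scale $\epsilon$ for $d$ coincide with those at scale $\epsilon^\theta$ for $d_\theta$. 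Canonical coordinates for $d_\theta$ therefore follow from canonical coordinates for $d$, after relabelling the scales (the flow-time parameter $v$ is not affected).

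Next we transfer the three conditions. \eqref{eq.HF} for $d$ raised to the power $\theta$ gives $d_\theta(f_tx,f_ty)\le C^\theta e^{k\theta|t|}d_\theta(x,y)^\alpha$, and \eqref{eq.HS} transfers similarly with exponent $\min(\theta\alpha, \ldots)$ adjusted. So both hold with some exponent $\beta$ and suitable constants. The real issue is \eqref{eq.HW} with exponent $1$. Raising \eqref{eq.HW} to the power $\theta$ only gives $d_\theta(f_tx,f_ty)\le C^\theta e^{-\lambda\theta t}d_\theta(x,y)^\alpha$, which still has exponent $\alpha$, not $1$.

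\textbf{Main obstacle: upgrading the exponent.} First pass to \eqref{eq.HW0}-type control via an averaging or adapted-time trick. For $x\in\wss_\eta(y)$ and $t\ge 0$, choose $s\in[0,t]$ and apply \eqref{eq.HW} on $[0,s]$, then again from time $s$. Iterating \eqref{eq.HW} $n$ times over blocks of length $t_0$ gives a bound of the form $d(f_{nt_0}x,f_{nt_0}y)\le C' e^{-\lambda' n t_0} d(x,y)^{\alpha^n}$, which degrades badly. A better route is the argument of Remark \ref{rem.weaker}, run in reverse, together with the freedom to choose $\theta$. We want $d(f_tx,f_ty)\le C e^{-\mu t}d(x,y)^{1/\theta}$ for some $\mu>0$; then raising to $\theta$ gives the Lipschitz estimate for $d_\theta$. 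To get such a superlinear-in-$d$ bound, use \eqref{eq.HF} backwards: $d(x,y)\ge C^{-1/\alpha}e^{-k s/\alpha}d(f_sx,f_sy)^{1/\alpha}$. For large $t$, contraction along the strong stable set beats any power of $d(x,y)$ once $t \gtrsim \log(1/d(x,y))$. For small $t$, we must compare $d(f_tx,f_ty)$ with $d(x,y)$ directly, which \eqref{eq.HF} does only with a Hölder loss.

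So the plan is to replace $d_\theta$ by an adapted metric. Define
\[
d'(x,y)=\sup_{s\ge 0}\, e^{\mu s}\, d_\theta(f_s x, f_s y)
\]
along strong stable pairs, and symmetrically along strong unstable pairs. To get an honest metric on $Y$, use instead a Bowen-type construction: set $d'(x,y)=\sup_{|s|\le S} d_\theta(f_sx,f_sy)$ for a large fixed $S$, choosing $\theta$ small and $S$ large so that one application of \eqref{eq.HW} over time $S$ yields contraction by a factor $<1$ at exponent $1$ in $d'$. Iterating then gives \eqref{eq.HW} with exponent $1$ for $d'$. Checking that the Hölder loss $\alpha$ is absorbed is the step I expect to be delicate. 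Since $d'\asymp$ Hölder $d$ by \eqref{eq.HF}, \eqref{eq.HF} and \eqref{eq.HS} persist with some exponent, and Corollary \ref{cor.comparison} concludes.
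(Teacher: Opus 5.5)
\textbf{The proposal has a genuine gap: it never produces a metric in which \eqref{eq.HW} holds with exponent $1$.}

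Your reduction is the right one. By Corollary \ref{cor.comparison} it suffices to find a H\"older equivalent metric in which the flow keeps canonical coordinates, \eqref{eq.HF} and \eqref{eq.HS}, and satisfies \eqref{eq.HW} with exponent $1$. You also correctly note that the snowflake metric $d^\theta$ cannot do this, since raising \eqref{eq.HW} to the power $\theta$ leaves the exponent $\alpha$ unchanged. That includes your opening choice $d'=d^\alpha$.

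The construction you settle on does not work as described. Take $d''(x,y)=\sup_{\abs{s}\le S}d(f_sx,f_sy)^\theta$ and $x\in\wss_\eta(y)$. Applying \eqref{eq.HW} from time $-S$ gives only $d''(f_Sx,f_Sy)\le C^\theta e^{-\lambda\theta S}d''(x,y)^\alpha$, with the exponent $\alpha$ intact. Every bounded-time use of \eqref{eq.HW} or \eqref{eq.HF} incurs the same loss. Since $d''(x,y)^{\alpha}/d''(x,y)\to\infty$ as $d''(x,y)\to 0$, no fixed $S$ and $\theta$ give a contraction factor $<1$ uniformly at small scales. The H\"older loss at scale $r$ can only be absorbed by exponential contraction over a time of order $\log(1/r)$, which is unbounded, so a fixed finite window is the wrong tool. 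This is exactly the step you flagged as delicate, and it is where the argument breaks.

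The missing idea is to weight the entire orbit exponentially. The paper takes $d'(x,y)=\int_{\R}d(f_tx,f_ty)e^{-\lambda'\abs{t}}\,dt$ with $0<\lambda'<\lambda$.
\begin{itemize}
\item A change of variables gives $d'(f_tx,f_ty)\le e^{\lambda'\abs{t}}d'(x,y)$ with no H\"older loss at all. So \eqref{eq.HF} holds for $d'$ with exponent $1$ and rate $k'=\lambda'$.
\item Splitting the integral at $0$ and at $t$ and using \eqref{eq.HW} for $d$ gives the uniform bound $d'(f_tx,f_ty)\le C'e^{-\lambda't}$ on local strong stable sets. This is \eqref{eq.HW0} with the same rate $\lambda'$.
\item Remark \ref{rem.weaker} upgrades this to \eqref{eq.HW} with exponent $\lambda'\cdot 1/k'=1$. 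Flow back for time $T=\frac{1}{\lambda'}\log(\eta/d'(x,y))$, which keeps the pair $\eta$-close, and apply \eqref{eq.HW0} from there.
\end{itemize}
The key point is that the adapted metric makes the expansion rate equal to the contraction rate, and that is what forces the exponent to be $1$.

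Your one-sided adapted metric $\sup_{s\ge 0}e^{\mu s}d_\theta(f_sx,f_sy)$ was close in spirit. The two-sided version $\sup_{s\in\R}e^{-\mu\abs{s}}d(f_sx,f_sy)$ with $\mu\le\lambda$ is an honest metric on $Y$ and works by the same mechanism.
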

\begin{proof}
	Let $\eta,\lambda,k > 0$, $C \geq 1$ be the constants from \eqref{eq.HW}, \eqref{eq.HS}, and
	\eqref{eq.HF} with respect to the metric $d$.  Choose any $0 < \lambda' < \lambda$, and let
	\[
		d'(x,y) = \int_{-\infty}^{\infty} d(f_tx,f_ty)e^{-\lambda'\abs{t}}\,dt
	.\]

	It is easy to check that that $d'$ is a metric. To see that it is H\"older equivalent to $d$,
	we use \eqref{eq.HF} to conclude that there is some $C' > 0$ such that
	\[
		C'd(x,y)^{\frac{1}{\alpha}}
		\leq \int_{-1}^1 d(f_tx,f_ty)e^{-\lambda'\abs{t}}\,dt 
		\leq d'(x,y).
	\]
	On the other hand, let $D$ be the diameter of  $Y$ with respect to $d$.
	If $d(x,y) \leq 1$ and $S = -\tau\log d(x,y) \geq 0$, then there is some $C' > 0$
	such that
	\begin{align*}
		d'(x,y) 
		&\leq \int_{-S}^{S} Ce^{kS} d(x,y)^\alpha\,dt + \int_{\abs{t}\geq S}De^{-\lambda'\abs{t}}\,dt\\
		&\leq C'Se^{kS}d(x,y)^\alpha + C'e^{-\lambda'S} \\
		&= C'(-\tau \log d(x,y))d(x,y)^{\alpha-k\tau}+C'd(x,y)^{\lambda'\tau}.
	\end{align*}
	Choosing $\tau \in (0,\frac{\alpha}{k})$, then the above exponents are
	both positive.

	By an easy calculation we see that $d'$ now satisfies the bi-Lipschitz property
	\begin{align*}
		d'(f_tx,f_ty)\leq e^{\lambda'\abs{t}}d'(x,y)
	\end{align*}
	for all $x,y \in Y$ and $t \in \R$, and it still satisfies the H\"older speed condition with the
	same H\"older exponent,
	\[
		\frac{1}{C'}\abs{t}^{\frac{1}{\alpha}}\leq d'(y,f_ty) \leq C'\abs{t}^\alpha
	\]
	for all $y \in Y$ and $\abs{t} \leq \eta$, where $C' > 0$ is some constant.
	Furthermore, if $x \in \wss_{\eta}(y)$ and $t \geq 0$, then
	\begin{gather*}
		d'(f_tx,f_ty) = \int_{-\infty}^\infty d(f_sx,f_sy)e^{-\lambda'\abs{s-t}}\,ds\\
									\leq \int_{-\infty}^{0}De^{-\lambda'(t-s)}\,ds 
									+ \int_{0}^{t} Ce^{-\lambda s}\eta^\alpha e^{-\lambda'(t-s)}\,ds
									+ \int_{t}^{\infty} Ce^{-\lambda s}\eta^\alpha e^{-\lambda'(s-t)}\,ds \\
									= e^{-\lambda't}\left(\int_{-\infty}^{0} De^{\lambda's}\,ds \right)
									+ \frac{C\eta^\alpha}{\lambda-\lambda'}(e^{-\lambda' t}-e^{-\lambda t})
									+ e^{-\lambda t}
									\left(\int_{t}^{\infty}C\eta^\alpha e^{-(\lambda'+\lambda)(s-t)}\,ds\right).
	\end{gather*}
	Doing a similar calculation for $x' \in \wuu_\eta(y)$, we conclude that
	there is some $C' > 0$ such that
	\[
		d'(f_tx,f_ty) \leq C'e^{-\lambda' t}, \quad d'(f_{-t}x',f_{-t}y) \leq C'e^{-\lambda't}
	.\]
	Therefore, with respect to $d'$, the flow satisfies \eqref{eq.HF} with growth rate $\lambda'$ and
	H\"older exponent $1$, and it satisfies \eqref{eq.HW0} with contraction rate $\lambda'$. Applying 
	the calculation from Remark \ref{rem.weaker}, we find that the flow 
	satisfies \eqref{eq.HW} with
	exponent $\alpha = \frac{\lambda'\cdot 1}{\lambda'} = 1$ with respect to $d'$.
	Hence by Corollary \ref{cor.comparison}, the metric $d'$ makes
	$\FFF$ a metric Anosov flow in the sense of Pollicott.
\end{proof}

\begin{figure}[!htbp]
  \centering
\begin{tikzpicture}[
    dot/.style={circle, fill, inner sep=1.5pt},
    heading/.style={align=center, anchor=south},
    region/.style={align=center, font=\scriptsize, text width=2.1cm}
  ]
  \fill[gray!20] ( 1.5,0) circle (2.6);
  \fill[gray!8]  (-1.5,0) circle (2.6);
  \draw (-1.5,0) circle (2.6);
  \draw ( 1.5,0) circle (2.6);

  \node[heading] at (-2.9, 2.75)
    {$(X,d,\mathcal{F})$ is $C^{\alpha}$\\metric Anosov};
  \node[heading] at ( 2.9, 2.75)
    {$(X,d,\mathcal{F})$ metric Anosov\\in the sense of Pollicott};

  \node[font=\footnotesize] at (-1.5, 1.5) {Strong Markov coding exists};

  \node[region] at (2.7, 0) {No strong Markov coding guaranteed --- want to exclude};

  \node[dot, label=below:{$(X,d,\mathcal{F})$}]  (dl) at (-2.6,0) {};
  \node[dot, label=below:{$(X,d',\mathcal{F})$}] (dr) at ( 0,0) {};

  \draw[dotted, -{Stealth[length=2.5mm]}, shorten <=3pt, shorten >=3pt]
    (dl) to[bend left=25]
    node[above, pos=0.35, font=\footnotesize] {$\exists\, d'\sim d$} (dr);
\end{tikzpicture}
\caption{Relationship between metric Anosov flow definitions and the existence of a strong Markov coding.}
  \label{fig:metric-anosov}
\end{figure}

\begin{remark}
	If $\FFF$ is a $C^\alpha$ metric Anosov flow, and $\FFF$ satisfies \eqref{eq.HS} with H\"older exponent
	$1$, then the metric $d'$ from the proof of Lemma \ref{lem.pollicott} 
	makes $f$ a Lipschitz metric Anosov flow for which $k=\lambda$.
	In this metric, the Bowen bracket attains the maximum
	H\"older exponents that can be guaranteed by our proof of Theorem \ref{MainTheorem}
	(see Remark \ref{rem.optimal}).
	For a general $C^\alpha$ metric Anosov flow,
	it is not clear if it is always possible to replace
	the metric so that $\FFF$ satisfies \eqref{eq.HS} with H\"older exponent $1$.

\end{remark}

\begin{remark}
In Figure \ref{fig:metric-anosov}, there are no known examples in the set on the right. Although we do not expect that any `natural examples' live there, we do not believe that it is the empty set. It may be possible to shine more light on this issue by constructing an example which is a metric Anosov flow in the sense of Pollicott but which is not $C^\alpha$. We do not pursue that here since our interest in such flows is only to exclude them from consideration. 
\end{remark}
\subsection*{Acknowledgements.} The first named author would like to thank Alexandro Luna for helpful discussions. The second-named author used AI to draw Figure \ref{fig:ss-holonomy} and Figure \ref{fig:metric-anosov} in this paper. No AI tools were used for proofs or writing.

\bibliographystyle{siam} 
\bibliography{biblioHolder-Jun2026}
\end{document}